\documentclass[12pt]{amsart}
\usepackage{hyperref}
\usepackage{amsmath,amsthm,amssymb}
\usepackage{enumitem}
\usepackage{xcolor}
\usepackage[english]{babel}
\usepackage[utf8]{inputenc}
\usepackage[T1]{fontenc}
\usepackage{lmodern}
\usepackage{microtype}
\usepackage{mathrsfs}
\usepackage{booktabs}
\usepackage{pgfplots}
\pgfplotsset{compat=1.16}
\newcommand{\doi}[1]{\textsc{doi:}\,\href{https://doi.org/#1}{\texttt{#1}}}

\begin{document}
\title[Moduli of smoothness and the Laguerre transform]{Moduli of smoothness and growth of the Laguerre transform: two-sided estimates}
\author[N. Kakharman]{Nurbek Kakharman}
\address{
 Nurbek Kakharman:
 \endgraf
 SDU University
 \endgraf
 Kaskelen, Kazakhstan
 \endgraf
 and \endgraf
 Institute of Mathematics and Mathematical Modeling
 \endgraf
 28 Shevchenko str., 050010 Almaty, Kazakhstan
 \endgraf
 {\it E-mail address:} {\rm n.kakharman@math.kz; nurbek.kakharman@sdu.edu.kz}
 }
\author[N. Tokmagambetov]{Niyaz Tokmagambetov}
\address{
 Niyaz Tokmagambetov:
 \endgraf
 Institute of Mathematics and Mathematical Modeling
 \endgraf
 28 Shevchenko str., 050010 Almaty, Kazakhstan
 \endgraf
 {\it E-mail address:} {\rm niyaz.tokmagambetov@gmail.com; tokmagambetov@math.kz}
 }
 
\subjclass[2020]{Primary 42C10; Secondary 41A17, 41A25, 33C45, 44A15} 

\keywords{Laguerre transform; modulus of smoothness; Riemann-Lebesgue lemma; Hausdorff-Young inequality; Titchmarsh's theorem; best approximation; Jackson and Bernstein inequalities; saturation.}

\begin{abstract}
We study weighted coefficient estimates for the Laguerre transform in terms of moduli of smoothness generated by the Laguerre translation. For every integer $r\ge1$ and $1\le p\le2$, a weighted $\ell^{p'}$ norm of $\min\{1,nt\}^{r}\widehat f_\alpha(n)$ is bounded by the modulus of order $r$; for $2\le p\le\infty$ the reverse direction holds on the same weighted sequence scale. At $p=2$ these estimates yield a two-sided equivalence, and testing individual Laguerre polynomials proves optimality of the multiplier weight up to constants. The proof combines Hausdorff--Young inequalities with an averaged lower bound for the translation multiplier. As consequences we recover Jackson- and Bernstein-type estimates and the known $L^2$ approximation characterisation of the Lipschitz classes for $0<\beta<r$. We also give the corresponding one-sided coefficient-tail criteria for general $p$, together with saturation and endpoint statements.
\end{abstract}

\maketitle \numberwithin{equation}{section}
\newtheorem{theorem}{Theorem}[section]
 \newtheorem{proposition}[theorem]{Proposition}
\newtheorem{corollary}[theorem]{Corollary}
\newtheorem{lemma}[theorem]{Lemma}
\newtheorem{remark}[theorem]{Remark}
\newtheorem{problem}[theorem]{Problem}
\newtheorem{property}[theorem]{Property}
\newtheorem{definition}[theorem]{Definition}
\newtheorem{example}[theorem]{Example}
\allowdisplaybreaks

\section{Introduction}
The relation between small-step smoothness and the decay of Fourier coefficients or Fourier transforms is a classical theme of harmonic analysis; see Stein \cite{Stein}, Titchmarsh \cite{titchmarsh}, and Zygmund \cite{Zygmund}. For example, if $f\in L^1(-\pi,\pi)$ is extended periodically and
\[
\begin{aligned}
\widehat f(n)&=\frac1{2\pi}\int_{-\pi}^{\pi}f(x)e^{-inx}\,dx,\\
\omega(f;t)_1&=\sup_{|h|\le t}\frac1{4\pi}\int_{-\pi}^{\pi}|f(x+h)-f(x)|\,dx,
\end{aligned}
\]
then, for $n\ne0$,
\[
|\widehat f(n)|\le\frac1{4\pi}\int_{-\pi}^{\pi}\left|f\left(x+\frac\pi n\right)-f(x)\right|\,dx
\le\omega\left(f;\frac\pi{|n|}\right)_1.
\]
This estimate implies the Riemann--Lebesgue lemma and quantifies coefficient decay by a modulus of continuity.

Bray and Pinsky \cite{BP,BP2} developed weighted growth estimates for Fourier transforms using translation and spherical means. The following one-dimensional formulation is a model for our results.
\begin{proposition}[Fourier model]\label{pro1}
Let $\widehat f(\lambda)=\int_{\mathbb R}f(x)e^{-i\lambda x}\,dx$, with the usual Hausdorff--Young interpretation for $1<p\le2$, and put
\[
\Omega(f;t)_p=\sup_{0\le h\le t}\|f(\cdot+h)+f(\cdot-h)-2f\|_{L^p(\mathbb R)}.
\]
For every $t>0$ and $f\in L^p(\mathbb R)$,
\[
\begin{cases}
\displaystyle \sup_{\lambda\in\mathbb R}\min\{1,(\lambda t)^2\}|\widehat f(\lambda)|
\le C_1\Omega(f;t)_1,&p=1,\\[6pt]
\displaystyle \left(\int_{\mathbb R}\min\{1,(\lambda t)^2\}^{p'}|\widehat f(\lambda)|^{p'}\,d\lambda\right)^{1/p'}
\le C_p\Omega(f;t)_p,&1<p\le2,
\end{cases}
\]
where $p'=p/(p-1)$.
\end{proposition}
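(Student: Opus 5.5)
The plan is to read off the Fourier transform of the symmetric second difference and then reduce the weighted bound to Hausdorff--Young, using an averaging over the step $h\in[0,t]$. For $g_h=f(\cdot+h)+f(\cdot-h)-2f$ we have
\[
\widehat{g_h}(\lambda)=(e^{i\lambda h}+e^{-i\lambda h}-2)\widehat f(\lambda)=-4\sin^2(\lambda h/2)\,\widehat f(\lambda).
\]
Hausdorff--Young (and the trivial bound $\|\widehat g\|_\infty\le\|g\|_1$ when $p=1$) then gives
\[
\Bigl\|4\sin^2(\lambda h/2)\widehat f(\lambda)\Bigr\|_{L^{p'}_\lambda}\le \|g_h\|_{L^p}\le\Omega(f;t)_p\qquad (0\le h\le t).
\]
What remains is to replace the oscillating multiplier $\sin^2(\lambda h/2)$ by the monotone weight $\min\{1,(\lambda t)^2\}$. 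A single $h$ cannot do this, because $\sin^2(\lambda h/2)$ vanishes at $\lambda h\in2\pi\mathbb Z$.

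\textbf{Averaging step.} Average in $h$ over $[0,t]$. Since $h\mapsto \|g_h\|_p$ is bounded by $\Omega(f;t)_p$, Minkowski's integral inequality gives
\[
\Bigl\|\widehat f(\lambda)\cdot\frac1t\int_0^t4\sin^2(\lambda h/2)\,dh\Bigr\|_{L^{p'}_\lambda}\le \frac1t\int_0^t\bigl\|4\sin^2(\lambda h/2)\widehat f\bigr\|_{p'}\,dh\le\Omega(f;t)_p .
\]
Here $\frac1t\int_0^t4\sin^2(\lambda h/2)\,dh=2\bigl(1-\frac{\sin(\lambda t)}{\lambda t}\bigr)=:m(\lambda t)$. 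Measurability in $(h,\lambda)$ is harmless, since $\widehat f$ is a fixed function and the multiplier is continuous. For $p=1$ the same computation is done pointwise in $\lambda$: average $|\widehat{g_h}(\lambda)|\le\Omega(f;t)_1$ over $h$.

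\textbf{Key lower bound (the main obstacle).} We need $m(s)\ge c\min\{1,s^2\}$ for all real $s$, with an absolute $c>0$.
\begin{itemize}
\item For $|s|\ge 1$: $|\sin s/s|\le\sin 1<1$, so $m(s)\ge 2(1-\sin1)$.
\item For $|s|\le1$: the alternating Taylor series gives $1-\frac{\sin s}{s}\ge \frac{s^2}{6}-\frac{s^4}{120}\ge \frac{s^2}{7}$.
\end{itemize}
Hence $c=\min\{2(1-\sin 1),2/7\}$ works. Inserting this into the averaged inequality yields the claim with $C_p=1/c$, the same constant for all $1\le p\le2$.

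The only delicate points are the $p=1$ endpoint, which is the pointwise version just described, and the Hausdorff--Young interpretation of $\widehat f$ for $1<p\le2$. For the latter, note that translation commutes with the $L^p\to L^{p'}$ extension of the Fourier transform, by density of $L^1\cap L^p$ in $L^p$. This justifies the multiplier identity for $\widehat{g_h}$.
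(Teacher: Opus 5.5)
Your proposal is correct and is essentially the paper's proof. Both use the multiplier $-2(1-\cos\lambda h)$, the averaged kernel $1-\sin(\lambda t)/(\lambda t)\ge c\min\{1,(\lambda t)^2\}$ and Hausdorff--Young, with pointwise averaging in $\lambda$ at $p=1$. The only variations are that you use Minkowski's integral inequality where the paper raises to the power $p'$ and applies Tonelli and Jensen, and that you give explicit constants where the paper argues qualitatively.

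One small correction: with the normalisation $\widehat f(\lambda)=\int_{\mathbb R} f(x)e^{-i\lambda x}\,dx$, Hausdorff--Young reads $\|\widehat g\|_{p'}\le(2\pi)^{1/p'}\|g\|_p$. So the constant is $C_p=(2\pi)^{1/p'}/c$, not $1/c$ uniformly in $p$; only $C_1=1/c$ is as you state. This does not affect the validity of the argument.
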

\begin{proof}
The symmetric difference has Fourier multiplier $-2(1-\cos(\lambda h))$. Moreover,
\[
\frac1t\int_0^t(1-\cos(\lambda h))\,dh
=1-\frac{\sin(\lambda t)}{\lambda t}
\ge c\min\{1,(\lambda t)^2\},
\]
with the quotient interpreted as $1$ at $\lambda=0$. The last inequality follows from Taylor's formula near zero, positivity on compact subsets of $\mathbb R\setminus\{0\}$, and $|\sin u/u|\le1/|u|$ for large $|u|$. Apply Hausdorff--Young to each symmetric difference. For $1<p\le2$, raise to the power $p'$, average over $h\in[0,t]$, and use Tonelli's theorem and Jensen's inequality. For $p=1$, use the $L^1$--$L^\infty$ estimate and average for each fixed $\lambda$.
\end{proof}
Higher-order versions and relations with weighted Fourier tails were studied by Dai and Ditzian \cite{DD}, Ditzian \cite{DZ,D2}, Gioev \cite{G}, Gorbachev and Tikhonov \cite{GTs}, and Bray \cite{Bray}. A fixed difference order has a finite saturation order, while higher powers of the difference operator describe higher smoothness. In the Hilbert-space setting, orthogonality also leads to two-sided coefficient estimates.

For the Laguerre setting, let ${\rm L}_n^{(\alpha)}$ denote the classical Laguerre polynomial, $\alpha>-1$. Its Rodrigues formula is
\[
{\rm L}_n^{(\alpha)}(x)=\frac{e^x x^{-\alpha}}{n!}\frac{d^n}{dx^n}\left(e^{-x}x^{n+\alpha}\right),
\]
and it satisfies
\begin{equation}\label{Leq}
\frac{d}{dx}\left(e^{-x}x^{\alpha+1}\frac{d}{dx}{\rm L}_n^{(\alpha)}(x)\right)
+ne^{-x}x^\alpha {\rm L}_n^{(\alpha)}(x)=0;
\end{equation}
see Szeg\H{o} \cite[Chapter~V]{Szego}. The classical Laguerre-transform convention of Debnath \cite{Deb1} uses
\begin{equation}\label{LT}
b_\alpha(f;n):=\int_0^\infty e^{-x}x^\alpha {\rm L}_n^{(\alpha)}(x)f(x)\,dx.
\end{equation}
Write
\begin{equation}\label{delta}
\lambda_n^\alpha:=\frac{\Gamma(n+\alpha+1)}{n!},
\qquad L_n^{(\alpha)}:=(\lambda_n^\alpha)^{-1/2}{\rm L}_n^{(\alpha)}.
\end{equation}
Throughout the paper, $\widehat f_\alpha(n)$ denotes the \emph{orthonormal} coefficient
\begin{equation}\label{LT-2}
L\{f\}(n)=\widehat f_\alpha(n):=\int_0^\infty e^{-x}x^\alpha L_n^{(\alpha)}(x)f(x)\,dx
=(\lambda_n^\alpha)^{-1/2}b_\alpha(f;n).
\end{equation}
For $f\in\mathcal L_\alpha^2=L^2((0,\infty),e^{-x}x^\alpha dx)$, orthogonal expansion gives
\begin{equation}\label{inlt-1}
f=\sum_{n=0}^\infty\widehat f_\alpha(n)L_n^{(\alpha)}
=\sum_{n=0}^\infty(\lambda_n^\alpha)^{-1}b_\alpha(f;n){\rm L}_n^{(\alpha)},
\end{equation}
with convergence in $\mathcal L_\alpha^2$. When $\alpha=0$, $\lambda_n^0=1$, and this agrees with the convention of McCully \cite{McC}. No pointwise inversion or inversion for arbitrary functions with existing coefficients is implicit in \eqref{inlt-1}; other norm-convergence statements are proved below.

Laguerre approximation has a substantial earlier literature. G\"orlich \cite{GM2} studied Ces\`aro and best approximation, and Abilov \cite{Abi} used the generalised Laguerre shift. Abilov, Abilova and Abilov \cite{AAA} employ arbitrary-order differences in the tensor-product setting and record the corresponding Lipschitz equivalence. Their derivative-free statement reduces to Theorem \ref{tm2} by taking $F(x,y)=f(x)$, second Laguerre parameter zero, difference order $k=r$, and $\nu=\beta/r$: only the zeroth mode in $y$ survives. They attribute that equivalence to Abilov \cite{Abi96} and Rzhavinskaya \cite{Rzh}. We recover this known $L^2$ result from the weighted estimates below.

Our emphasis is on a common weighted coefficient expression for the full range $1\le p\le\infty$ and on the averaged multiplier estimate that yields it. For $\alpha\ge0$, let $T_h^{(\alpha)}$ be the translation defined in Section \ref{sec2} and put
\[
\Omega_r(L;f;t)_p=\sup_{0\le h\le t}\|(I-T_h^{(\alpha)})^r f\|_{\mathcal L_\alpha^p},
\qquad r\in\mathbb N,\quad 0<t\le1.
\]
Here $\|f\|_{\mathcal L_\alpha^p}=\|e^{-x/2}f\|_{L^p(x^\alpha dx)}$, including the usual essential-supremum interpretation for $p=\infty$. The results are as follows.
\begin{itemize}
\item \emph{Weighted growth estimate} (Theorem \ref{estimates}). If $1\le p\le2$ and $f\in\mathcal L_\alpha^p$, then
\[
\left(\sum_{n\ge0}(\lambda_n^\alpha)^{1-p'/2}\min\{1,nt\}^{rp'}|\widehat f_\alpha(n)|^{p'}\right)^{1/p'}
\le C_{\alpha,p,r}\Omega_r(L;f;t)_p.
\]
For $p=1$, the left-hand side means
\[
\sup_{n\ge0}(\lambda_n^\alpha)^{-1/2}\min\{1,nt\}^r|\widehat f_\alpha(n)|.
\] The spectral parameter $n$ of the Laguerre operator gives the scale $nt$; in the Fourier model the corresponding scale is $(\lambda t)^2$.
\item \emph{Dual estimate} (Theorem \ref{estimatesdual}). If $2\le p\le\infty$ and $f\in\mathcal L_\alpha^p$, the same weighted expression bounds $\Omega_r(L;f;t)_p$ from above. Remark \ref{whyp2} rules out reversing the first estimate for any fixed $r$ when $p<2$.
\item \emph{$L^2$ equivalence and optimal order} (Theorem \ref{equiv} and Proposition \ref{sharp}). At $p=2$,
\[
\Omega_r(L;f;t)_2\asymp\left(\sum_{n\ge0}\min\{1,nt\}^{2r}|\widehat f_\alpha(n)|^2\right)^{1/2},
\]
with constants depending only on $\alpha,r$. Every nonnegative weight admissible in the corresponding lower estimate is bounded by a constant multiple of $\min\{1,nt\}^{r}$.
\item \emph{Approximation and Lipschitz classes} (Section \ref{sec5}). The $L^2$ estimate gives Jackson- and Bernstein-type inequalities and recovers
\[
f\in\mathrm{Lip}(\beta,r;\mathcal L_\alpha^2)
\iff E_m(f)_2=O(m^{-\beta}),\qquad 0<\beta<r.
\]
For general $p$, Theorem \ref{tm2p} instead uses the weighted coefficient tail $\mathcal E_m(f)_p$ defined in \eqref{Emp}: smoothness implies its decay for $p\le2$, and its decay implies smoothness for $p\ge2$. In the latter range, Proposition \ref{jbp} also gives $E_m(f)_p\lesssim\mathcal E_m(f)_p$. These tails equal best-approximation errors at $p=2$. The saturation and endpoint statements are given in Remarks \ref{sat} and \ref{betaeqr}.
\end{itemize}
Section \ref{sec2} develops the weighted sequence scale, Hausdorff--Young inequalities, and translation properties. Section \ref{sec3} proves the multiplier estimates, including an averaged lower bound. Sections \ref{sec4} and \ref{sec5} give the weighted growth inequalities and their approximation-theoretic consequences.

Unless a dependence is specified more precisely, constants may depend on the fixed parameters $\alpha,p,r$ and, in Lipschitz estimates, on $\beta$. They are independent of $f,n,m,h,t$. Constants implicit in an $O$-estimate for a fixed function may also depend on that function.
\section{Auxiliary results and notations}\label{sec2}
\subsection{Weighted Lebesgue measurable space}
For \(1 \leq p < \infty\) and \(\alpha > -1\), denote by $\mathcal{L}^p_{\alpha}(\mathbb{R}^+)$ the space of complex-valued Lebesgue measurable functions \(f\) on the half-line \( (0, \infty)\) and such that the function \(\left| f(x)e^{-x/2} \right|^p x^\alpha\) is integrable over \(\mathbb{R}^+\). The space $\mathcal{L}^p_{\alpha}$ is equipped with the norm
\begin{equation}
 \label{norm} \|f\|_{\mathcal{L}^p_{\alpha}} := \left( \int_{0}^{\infty} \left|e^{-x/2} f(x)\right|^p x^\alpha \, dx \right)^{\frac{1}{p}},
\end{equation}
and in the case $p=\infty$, we assume that $\mathcal{L}^{\infty}_{\alpha}$ is the space of measurable functions \(f\) such that the product $e^{-x/2} f(x)$ is essentially bounded on \(\mathbb{R}^+\),
\begin{equation*}
 \|f\|_{\mathcal{L}^{\infty}_\alpha} := \text{ess sup}\left\{|e^{-x/2} f(x)| : x \in (0, \infty) \right\}.
\end{equation*}
Equivalently, $f\mapsto e^{-x/2}f$ is an isometric isomorphism of $\mathcal{L}^p_{\alpha}$ onto $L^p\big((0,\infty),x^\alpha dx\big)$; we shall use this identification repeatedly.
The space $\mathcal{L}^2_{\alpha} $ is a Hilbert space with the inner product
\begin{equation}
 \label{inner} \langle f,g \rangle_{\mathcal{L}^2_{\alpha}} := \int_{0}^{\infty} \left(e^{-x/2} f(x)\right)\overline{\left(e^{-x/2} g(x)\right)} x^\alpha \, dx, \quad f,g \in \mathcal{L}^2_{\alpha}.
\end{equation}
To this end, we define the weighted ${l}^p_{\alpha}$ space of the sequences $\widehat{f}_{\alpha}(n)$ by the norm
\begin{equation}\label{norms}
\|\widehat{f}_{\alpha}\|_{{l}^p_{\alpha}}:=\left(\sum_{n=0}^{\infty}\left(\lambda_{n}^{\alpha}\right)^{\frac{2-p}{2}}\big|\widehat{f}_{\alpha}(n)\big|^p\right)^{\frac{1}{p}},~~~1\leq p<\infty,
 \end{equation}
 and in the case $p=\infty$, the corresponding norm is defined as
 \begin{equation}\label{norms-infty}
 \|\widehat{f}_{\alpha}\|_{{l}^{\infty}_{\alpha}}:=\sup_{n\geq0}\big| (\lambda_{n}^{\alpha})^{-1/2}\widehat{f}_{\alpha}(n)\big|,
 \end{equation}
where $\lambda^{\alpha}_n$ is the normalisation factor \eqref{delta}.
\begin{remark}\label{scale}
The exponent $\frac{2-p}{2}$ in \eqref{norms} gives a fixed-measure interpolation scale compatible with the $p=2$ and $p=\infty$ normalisations. Indeed, \eqref{norms} may be rewritten as
\[
\|\widehat f_\alpha\|_{{l}^p_\alpha}=\big\|(\lambda_n^\alpha)^{\frac1p-\frac12}\widehat f_\alpha(n)\big\|_{\ell^p},
\]
so that the weight attached to a single coefficient is $(\lambda_n^\alpha)^{\frac1p-\frac12}$: it equals $1$ for $p=2$, which is Plancherel's identity \eqref{pe} below, and tends to $(\lambda_n^\alpha)^{-1/2}$ as $p\to\infty$, which is \eqref{norms-infty}. Equivalently, ${l}^p_\alpha=L^p(\mathbb{N}_0,\nu_\alpha)$ for the \emph{fixed} measure $\nu_\alpha(\{n\})=\lambda_n^\alpha$, once the coefficients are normalised as $(\lambda_n^\alpha)^{-1/2}\widehat f_\alpha(n)$. This last observation is what makes the Riesz--Thorin theorem directly applicable in Theorem \ref{hyl}.
\end{remark}
\subsection{Basic properties of Laguerre transform}
The system of normalised Laguerre polynomials $\{L_n^{(\alpha)}(x)\}_{n=0}^{\infty}$ forms an orthonormal basis in the space $\mathcal{L}^2_{\alpha},$ and satisfies the following orthogonality condition \cite[p.100, Formula (5.1.1)]{Szego}
\begin{equation}\label{ort}\int_{0}^{\infty} e^{-x} x^{\alpha} L_n^{(\alpha)}(x) L_{m}^{(\alpha)}(x) \, dx = \lambda^{\alpha}_n (\lambda^{\alpha}_n)^{-1/2}(\lambda^{\alpha}_m)^{-1/2}\delta_{mn} = \delta_{nm}, \end{equation}
where $\delta_{mn}$ is the Kronecker delta symbol, and $\lambda^{\alpha}_n$ is the normalisation factor \eqref{delta}.
Furthermore, the norm of a function $f\in\mathcal{L}^2_{\alpha}$ can be expressed in terms of its Fourier--Laguerre coefficients by Plancherel identity \cite[p.43, (1.8)]{AV}:
\begin{equation}
 \label{pe} \|f\|_{\mathcal{L}^2_{\alpha}}^2=\sum_{n=0}^{\infty} |\widehat{f}_{\alpha}(n)|^2=\|\widehat{f}_{\alpha}\|_{{l}^2_{\alpha}}^2.
\end{equation}
We shall repeatedly use the following density statement.
\begin{lemma}\label{dense}
Let $\alpha>-1$ and $1\le p<\infty$. Then the linear span $\mathcal{P}_\alpha$ of $\{L_n^{(\alpha)}\}_{n\ge0}$ is dense in $\mathcal{L}^p_{\alpha}(\mathbb{R}^+)$.
\end{lemma}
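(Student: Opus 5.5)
Via the isometry $f\mapsto e^{-x/2}f$, it suffices to show that the functions $e^{-x/2}P(x)$, with $P$ a polynomial, are dense in $L^p((0,\infty),x^\alpha dx)$ for $1\le p<\infty$. Indeed, $\mathcal P_\alpha$ is exactly the set of all polynomials, since ${\rm L}_n^{(\alpha)}$ has degree exactly $n$. Moreover, every polynomial lies in $\mathcal L^p_\alpha$, because $e^{-px/2}|P(x)|^px^\alpha$ is integrable near $0$ (since $\alpha>-1$) and near $\infty$.

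The argument uses duality and Hahn--Banach. Let $g\in L^{p'}((0,\infty),x^\alpha dx)$ annihilate every $e^{-x/2}x^k$:
\[
\int_0^\infty e^{-x/2}x^k g(x)x^\alpha\,dx=0,\qquad k\ge0.
\]
We must show $g=0$ a.e.

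Consider the function
\[
F(z)=\int_0^\infty e^{-zx}g(x)x^\alpha\,dx .
\]
By Hölder's inequality, $F$ is holomorphic in the half-plane $\operatorname{Re}z>0$. This holds because $e^{-\varepsilon x}x^k\in L^{p}(x^\alpha dx)$ for every $\varepsilon>0$ and $k\ge0$, which justifies differentiation under the integral sign. The $k$-th derivative of $F$ at $z=1/2$ equals $(-1)^k$ times the $k$-th vanishing moment above. Hence $F$ vanishes identically near $1/2$, and so on the whole half-plane.

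In particular, the Fourier transform of $e^{-x/2}g(x)x^\alpha\mathbf 1_{(0,\infty)}$ vanishes; this is $F(1/2+i\xi)=0$ for all real $\xi$. This function lies in $L^1(\mathbb R)$, again by Hölder's inequality, since $e^{-x/2}\in L^p(x^\alpha dx)$. Fourier uniqueness in $L^1$ then gives $g=0$ a.e., which completes the proof.

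The main point requiring care is the holomorphy step. One needs a uniform integrable majorant for $x^k e^{-zx}g(x)x^\alpha$ on compact subsets of $\operatorname{Re}z>0$. This follows because $\|x^ke^{-\varepsilon x}\|_{L^p(x^\alpha dx)}<\infty$ for every $\varepsilon>0$. For $p=1$ one has $p'=\infty$, and the same bounds hold trivially.
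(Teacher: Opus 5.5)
Your proposal is correct and follows the paper's proof essentially step for step. Both use the Hahn--Banach reduction to an annihilator $g\in L^{p'}(x^\alpha dx)$ and the function $F(z)=\int_0^\infty e^{-zx}g(x)x^\alpha\,dx$, which is holomorphic on $\Re z>0$ with all derivatives vanishing at $z=1/2$, so $F\equiv0$. The only difference is the last step: the paper cites the uniqueness theorem for the Laplace transform, while you restrict to the line $\Re z=1/2$ and apply Fourier uniqueness to the $L^1(0,\infty)$ function $e^{-x/2}g(x)x^\alpha$, which is a valid explicit justification of the same fact.
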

\begin{proof}
Since $\mathcal{P}_\alpha$ coincides with the set of all algebraic polynomials, by the Hahn--Banach theorem it suffices to show that if a function $h\in L^{p'}\big((0,\infty),x^\alpha dx\big)$ satisfies
\begin{equation}\label{annih}
\int_{0}^{\infty}e^{-x/2}x^{k}h(x)\,x^{\alpha}dx=0\qquad\text{for all }k\in\mathbb{N}_0,
\end{equation}
then $h=0$ almost everywhere. Put $\Phi(z)=\int_{0}^{\infty}e^{-zx}h(x)x^{\alpha}dx$. By H\"older's inequality, for $\Re z>0$,
\[
\int_{0}^{\infty}e^{-\Re z\,x}|h(x)|x^{\alpha}dx\le \|h\|_{L^{p'}(x^\alpha dx)}\left(\int_{0}^{\infty}e^{-p\Re z\,x}x^{\alpha}dx\right)^{1/p}<\infty ,
\]
Thus $\Phi$ is analytic in $\Re z>0$, and differentiation under the integral gives
\[
\Phi^{(k)}(1/2)=(-1)^k\int_0^\infty e^{-x/2}x^kh(x)x^\alpha\,dx.
\] By \eqref{annih} all derivatives of $\Phi$ vanish at $z=1/2$, hence $\Phi\equiv 0$ on $\Re z>0$. By the uniqueness theorem for the Laplace transform, $h(x)x^{\alpha}=0$ for almost every $x>0$, that is, $h=0$ a.e.
\end{proof}
\begin{lemma}[$\mathcal{L}^1_{\alpha}$--${l}^{\infty}_{\alpha}$ estimate]\label{lem1}
Let $f\in\mathcal{L}^1_{\alpha},$ $\alpha\geq 0$, such that
$$
\|f\|_{\mathcal{L}^1_{\alpha}}=\int_{0}^{\infty} \left|e^{-x/2} f(x)\right| x^\alpha \, dx<\infty.
$$
 Then
\begin{equation}\label{L1linfty}
\|\widehat{f}_{\alpha}\|_{{l}^{\infty}_{\alpha}}=\sup_{n\geq0}\big| (\lambda_{n}^{\alpha})^{-1/2} \widehat{f}_{\alpha}(n)\big|\leq \frac{1}{\Gamma(\alpha+1)}\,\|f\|_{\mathcal{L}^1_{\alpha}}.
 \end{equation}
\end{lemma}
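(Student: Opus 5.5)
The plan is to reduce \eqref{L1linfty} to a classical pointwise bound for Laguerre polynomials and then integrate. By \eqref{delta} and \eqref{LT-2},
\[
(\lambda_n^\alpha)^{-1/2}\widehat f_\alpha(n)=(\lambda_n^\alpha)^{-1}b_\alpha(f;n)
=\frac{n!}{\Gamma(n+\alpha+1)}\int_0^\infty e^{-x}x^\alpha{\rm L}_n^{(\alpha)}(x)f(x)\,dx .
\]
So the weight $(\lambda_n^\alpha)^{-1/2}$ in the ${l}^\infty_\alpha$ norm turns the orthonormal coefficient back into the un-normalised polynomial ${\rm L}_n^{(\alpha)}$, divided by $\lambda_n^\alpha$. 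I then split $e^{-x}=e^{-x/2}\cdot e^{-x/2}$. One factor $e^{-x/2}$ is attached to $f$, producing the $\mathcal L^1_\alpha$ density $|e^{-x/2}f(x)|x^\alpha$. The other is attached to the polynomial, which leaves
\[
\big|(\lambda_n^\alpha)^{-1/2}\widehat f_\alpha(n)\big|\le\frac{n!}{\Gamma(n+\alpha+1)}\,\sup_{x>0}\big|e^{-x/2}{\rm L}_n^{(\alpha)}(x)\big|\;\|f\|_{\mathcal L^1_\alpha}.
\]

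The key input is the classical inequality, valid for $\alpha\ge0$ (Szeg\H{o} \cite[Chapter~VII, (7.21.3)]{Szego}):
\[
\big|e^{-x/2}{\rm L}_n^{(\alpha)}(x)\big|\le {\rm L}_n^{(\alpha)}(0)=\binom{n+\alpha}{n}=\frac{\Gamma(n+\alpha+1)}{n!\,\Gamma(\alpha+1)},\qquad x\ge0 .
\]
Inserting this bound, the factor $n!/\Gamma(n+\alpha+1)$ cancels exactly. What remains is the constant $1/\Gamma(\alpha+1)$, uniformly in $n$. Taking the supremum over $n\ge0$ gives \eqref{L1linfty}.

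The integral defining $\widehat f_\alpha(n)$ converges absolutely by the same estimate, so no separate justification is needed. This is also the only place where the hypothesis $\alpha\ge0$ enters: for $-1<\alpha<0$ the maximum of $|e^{-x/2}{\rm L}_n^{(\alpha)}|$ is not attained at $x=0$, and the uniform bound fails.

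The only nontrivial step is the pointwise inequality, and I would simply cite Szeg\H{o} for it. If a self-contained argument were wanted, my first attempt would use the integral representation expressing $e^{-x/2}{\rm L}_n^{(\alpha)}(x)$ as a positive average of $e^{-x/2}{\rm L}_n^{(0)}$-type terms, valid for $\alpha\ge0$. This would reduce the claim to the case $\alpha=0$, namely $|e^{-x/2}{\rm L}_n^{(0)}(x)|\le1$. That case in turn follows from the Sturm-type argument on \eqref{Leq}: the function $e^{-x}\big({\rm L}_n^{(0)}(x)\big)^2+\text{(derivative term)}$ is monotone, so successive local maxima of $|e^{-x/2}{\rm L}_n^{(0)}|$ are nonincreasing.
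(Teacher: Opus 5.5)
Your proposal is correct and is essentially the paper's proof. The paper likewise uses the pointwise bound $|e^{-x/2}{\rm L}_n^{(\alpha)}(x)|\le \Gamma(n+\alpha+1)/(n!\,\Gamma(\alpha+1))$ for $\alpha\ge0$ (citing Watson rather than Szeg\H{o}), integrates it against $|e^{-x/2}f(x)|x^\alpha$, and lets the normalisation $\lambda_n^\alpha$ cancel.

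Your aside on $-1<\alpha<0$ is stated too broadly, though it plays no role in the proof. For example, with $n=1$ and $\alpha=-\tfrac1{10}$ the maximum of $e^{-x/2}|{\rm L}_1^{(\alpha)}(x)|=e^{-x/2}|\tfrac{9}{10}-x|$ is attained at $x=0$.
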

\begin{proof} Let us first recall some useful estimates. Watson \cite[p. 21]{watson}, obtained the following estimate for Laguerre polynomials for \(\alpha \geq 0\):
\begin{equation}
 \label{westimate}
|e^{-x/2} {\rm L}^{(\alpha)}_n(x)| \leq \frac{\Gamma(n + \alpha + 1)}{n! \Gamma(\alpha + 1)}, \quad x \geq 0.\end{equation}
This estimate for \(\alpha = 0\) was obtained earlier by Szeg\H{o} by another method, and he proved that it is strict for \(x > 0\).
By \eqref{LT-2} and \eqref{westimate} we have
 \begin{align*}
 \big|\widehat{f}_{\alpha}(n)\big| &\leq \int_{0}^{\infty} \big|e^{-x} x^{\alpha} L_n^{(\alpha)}(x) f(x)\big| \, dx\\
 &\leq \int_{0}^{\infty} \big|e^{-x/2} f(x)\big|x^{\alpha} \big|e^{-x/2} L_n^{(\alpha)}(x)\big| \, dx\\
 &\leq\frac{\Gamma(n + \alpha + 1)}{n! \Gamma(\alpha + 1)} (\lambda_{n}^{\alpha})^{-1/2} \int_{0}^{\infty}\big|e^{-x/2} f(x)\big|x^{\alpha} \, dx\\
 &= \frac{1}{\Gamma(\alpha + 1)} (\lambda_{n}^{\alpha})^{1/2} \|f\|_{\mathcal{L}^1_{\alpha}},
 \end{align*}
 which is \eqref{L1linfty}.
\end{proof}
\begin{corollary}[Riemann--Lebesgue lemma]\label{RL}
Let $\alpha\ge0$ and $f\in\mathcal{L}^1_{\alpha}$. Then
\[
(\lambda_n^{\alpha})^{-1/2}\widehat{f}_{\alpha}(n)=o(1),\qquad n\to\infty.
\]
\end{corollary}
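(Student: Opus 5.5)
The proof is a density argument: Lemma \ref{lem1} gives a uniform bound for the functionals $f\mapsto(\lambda_n^\alpha)^{-1/2}\widehat f_\alpha(n)$, and Lemma \ref{dense} supplies a dense class on which these functionals vanish eventually. We check the two ingredients and then combine them.

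First, the functionals are uniformly bounded. Each map $f\mapsto(\lambda_n^\alpha)^{-1/2}\widehat f_\alpha(n)$ is linear on $\mathcal L^1_\alpha$. By Lemma \ref{lem1}, for every $g\in\mathcal L^1_\alpha$,
\[
\sup_{n\ge0}\big|(\lambda_n^\alpha)^{-1/2}\widehat g_\alpha(n)\big|\le \Gamma(\alpha+1)^{-1}\|g\|_{\mathcal L^1_\alpha}.
\]
This bound is uniform in $n$, which is what the approximation step needs.

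Second, the coefficients of polynomials vanish eventually. By Lemma \ref{dense} with $p=1$, the span $\mathcal P_\alpha$ of the $L_n^{(\alpha)}$ is dense in $\mathcal L^1_\alpha$. Take $P=\sum_{k=0}^N c_kL_k^{(\alpha)}\in\mathcal P_\alpha$. Then $e^{-x}x^\alpha L_n^{(\alpha)}P$ is integrable, so $\widehat P_\alpha(n)$ is defined. By the orthogonality relation \eqref{ort}, $\widehat P_\alpha(n)=0$ for $n>N$.

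Now combine the two steps. Fix $\varepsilon>0$ and choose $P\in\mathcal P_\alpha$ with $\|f-P\|_{\mathcal L^1_\alpha}<\varepsilon$. For $n>N$, linearity and the uniform bound give
\[
\big|(\lambda_n^\alpha)^{-1/2}\widehat f_\alpha(n)\big|=\big|(\lambda_n^\alpha)^{-1/2}(\widehat{f-P})_\alpha(n)\big|\le \Gamma(\alpha+1)^{-1}\varepsilon .
\]
Since $\varepsilon$ is arbitrary, $(\lambda_n^\alpha)^{-1/2}\widehat f_\alpha(n)=o(1)$ as $n\to\infty$.

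There is no real obstacle here. The only points to check are that polynomials lie in $\mathcal L^1_\alpha$, which holds because $e^{-x/2}$ beats any polynomial growth, and that Lemma \ref{dense} covers the case $p=1$, which it does.
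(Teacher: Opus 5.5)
Your proposal is correct and follows the paper's proof essentially verbatim: you approximate $f$ in $\mathcal L^1_\alpha$ by a finite Laguerre sum via Lemma \ref{dense}, note by \eqref{ort} that its coefficients vanish beyond its degree, and control the remainder uniformly in $n$ by Lemma \ref{lem1}. The only cosmetic difference is that the paper takes the approximation error to be $\varepsilon\,\Gamma(\alpha+1)$, so that its final bound reads $<\varepsilon$.
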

\begin{proof}
Let $\varepsilon>0$. By Lemma \ref{dense} there is $P=\sum_{k=0}^{N}a_kL_k^{(\alpha)}\in\mathcal{P}_\alpha$ with $\|f-P\|_{\mathcal{L}^1_{\alpha}}<\varepsilon\,\Gamma(\alpha+1)$. By \eqref{ort}, $\widehat{P}_\alpha(n)=0$ for $n>N$, whence by \eqref{L1linfty}
\begin{align*}
(\lambda_n^{\alpha})^{-1/2}|\widehat f_\alpha(n)|
&=(\lambda_n^{\alpha})^{-1/2}|\widehat{(f-P)}_\alpha(n)|\\
&\le\frac{\|f-P\|_{\mathcal L_\alpha^1}}{\Gamma(\alpha+1)}<\varepsilon,\qquad n>N.\qedhere
\end{align*}
\end{proof}
\begin{theorem}[Hausdorff-Young inequality]
 \label{hyl} Let $f\in\mathcal{L}^p_{\alpha},$ $\alpha\geq 0$, $p\in[1,2]$, and let $p'$ be the conjugate exponent of $p$. Then
 \begin{equation}
 \label{hyineq} \|\widehat{f}_{\alpha}\|_{{l}^{p'}_{\alpha}}\leq c_{\alpha,p}\|f\|_{\mathcal{L}^p_{\alpha}},\qquad c_{\alpha,p}:=\Gamma(\alpha+1)^{-\left(\frac{2}{p}-1\right)}.
 \end{equation}
 Explicitly,
 \[
 \left(\sum_{n=0}^{\infty}(\lambda^{\alpha}_n)^{\frac{2-p'}{2}}\big|\widehat{f}_{\alpha}(n)\big|^{p'}\right)^{1/p'}\le c_{\alpha,p}\|f\|_{\mathcal{L}^p_{\alpha}},\qquad 1<p\le 2.
 \]
\end{theorem}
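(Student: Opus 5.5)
The plan is to obtain \eqref{hyineq} by Riesz--Thorin interpolation between the two endpoint estimates already available: the $\mathcal L^1_\alpha$--${l}^\infty_\alpha$ bound of Lemma \ref{lem1} and the Plancherel identity \eqref{pe}. As in Remark \ref{scale}, we recast the problem as one about a single linear operator between fixed measure spaces. Let $Uf:=e^{-x/2}f$; this is an isometry of $\mathcal L^p_\alpha$ onto $L^p((0,\infty),x^\alpha dx)$. Let $\nu_\alpha$ be the measure on $\mathbb N_0$ with $\nu_\alpha(\{n\})=\lambda_n^\alpha$. Define
\[
(Tg)(n):=(\lambda_n^\alpha)^{-1/2}\int_0^\infty g(x)\,e^{-x/2}L_n^{(\alpha)}(x)\,x^\alpha\,dx ,
\]
so that $T(Uf)(n)=(\lambda_n^\alpha)^{-1/2}\widehat f_\alpha(n)$. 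A direct computation from \eqref{norms}, \eqref{norms-infty} gives, for every $1\le q\le\infty$,
\[
\|T(Uf)\|_{L^q(\nu_\alpha)}=\|\widehat f_\alpha\|_{{l}^q_\alpha}.
\]
For $q<\infty$, the $n$-th term is $\lambda_n^\alpha\cdot(\lambda_n^\alpha)^{-q/2}|\widehat f_\alpha(n)|^q$, which is exactly the summand in \eqref{norms}.

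The endpoint bounds then read as follows. At $p=1$, Lemma \ref{lem1} gives $\|Tg\|_{L^\infty(\nu_\alpha)}\le\Gamma(\alpha+1)^{-1}\|g\|_{L^1(x^\alpha dx)}$. At $p=2$, \eqref{pe} gives $\|Tg\|_{L^2(\nu_\alpha)}=\|g\|_{L^2(x^\alpha dx)}$ with constant $1$. Riesz--Thorin with $1/p=(1-\theta)/1+\theta/2$, i.e.\ $\theta=2-2/p$, yields the constant
\[
\Gamma(\alpha+1)^{-(1-\theta)}=\Gamma(\alpha+1)^{-(2/p-1)}=c_{\alpha,p}.
\]
The target exponent is $p'$, since $1/p'=(1-\theta)/\infty+\theta/2=1-1/p$. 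Undoing $U$ gives \eqref{hyineq}, and the explicit form follows by writing out \eqref{norms} with exponent $p'$.

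The main obstacle is making sure $T$ is a single well-defined linear operator on $L^1+L^2$ to which Riesz--Thorin genuinely applies. For $g\in L^1(x^\alpha dx)$ the defining integral converges absolutely by \eqref{westimate}. For $g\in L^2$ it converges because $e^{-x/2}L_n^{(\alpha)}\in L^2(x^\alpha dx)$. So $T$ is defined pointwise in $n$ on $L^1+L^2$, and hence on every $L^p$ with $1\le p\le 2$, consistently.

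To be safe, one can instead interpolate on simple functions and then extend by density (Lemma \ref{dense}, or density of simple functions). In that case the extension must coincide with the pointwise coefficients. This holds because, for each fixed $n$, the map $g\mapsto Tg(n)$ is continuous on $L^p$: $e^{-x/2}L_n^{(\alpha)}$ is bounded and lies in $L^2(x^\alpha dx)$, so by interpolation it lies in $L^{p'}(x^\alpha dx)$. Combining this pointwise continuity with Fatou's lemma on the sequence side passes the inequality to the limit.

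The hypothesis $\alpha\ge0$ is used only through Watson's bound \eqref{westimate} in Lemma \ref{lem1}.
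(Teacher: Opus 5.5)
Your proposal is correct and is essentially the paper's proof: you use the same operator $g\mapsto(\lambda_n^\alpha)^{-1/2}\widehat f_\alpha(n)$ from $L^p(x^\alpha dx)$ to $L^{q}(\nu_\alpha)$ and the same two endpoints (Lemma \ref{lem1} and Plancherel), and Riesz--Thorin yields $c_{\alpha,p}=\Gamma(\alpha+1)^{-(2/p-1)}$. Your $\theta=2-2/p$ is the paper's $\theta=2/p-1$ with the endpoints swapped, and your remarks on defining the operator on $L^1+L^2$ (or via simple functions, continuity in each $n$, and Fatou) supply a detail the paper leaves implicit.
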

\begin{proof}
Here we use the language of normed vector spaces and bounded linear maps, as is convenient for the application of the Riesz--Thorin theorem. As explained in Remark \ref{scale}, both endpoint spaces have to be realised over one and the same pair of measure spaces. Let
\[
d\mu(x)=x^{\alpha}dx \ \text{ on } (0,\infty),\qquad \nu_\alpha(\{n\})=\lambda^{\alpha}_n \ \text{ on } \mathbb{N}_0,
\]
put $g(x)=e^{-x/2}f(x)$, so that $\|f\|_{\mathcal{L}^p_{\alpha}}=\|g\|_{L^p(\mu)}$, and define
\[
Ug(n):=(\lambda^{\alpha}_n)^{-1/2}\widehat{f}_{\alpha}(n)=\int_{0}^{\infty}K_n(x)\,g(x)\,d\mu(x),
\]
where $K_n(x):=(\lambda^{\alpha}_n)^{-1/2}e^{-x/2}L_n^{(\alpha)}(x)$.
By \eqref{pe},
\[
\|Ug\|^2_{L^2(\nu_\alpha)}=\sum_{n\ge0}\lambda^{\alpha}_n(\lambda^{\alpha}_n)^{-1}\big|\widehat{f}_{\alpha}(n)\big|^2=\|f\|^2_{\mathcal{L}^2_{\alpha}}=\|g\|^2_{L^2(\mu)},
\]
so $U:L^2(\mu)\to L^2(\nu_\alpha)$ is an isometry, while by Lemma \ref{lem1}
\[
\|Ug\|_{L^\infty(\nu_\alpha)}=\sup_{n\ge0}(\lambda^{\alpha}_n)^{-1/2}\big|\widehat{f}_{\alpha}(n)\big|\le\frac{1}{\Gamma(\alpha+1)}\|g\|_{L^1(\mu)} ,
\]
that is, $U:L^1(\mu)\to L^\infty(\nu_\alpha)$ is bounded with norm at most $\Gamma(\alpha+1)^{-1}$. For \( 0 < \theta < 1 \) define the exponents \( p\) and \( q \) via
\[
\frac{1}{p} = \frac{1-\theta}{2} + \frac{\theta}{1}, \quad \frac{1}{q} = \frac{1-\theta}{2} + \frac{\theta}{\infty} = \frac{1-\theta}{2},
\]
so that \( p \in (1, 2) \), $\theta=\frac2p-1$ and $q=p'$. Since the two endpoint statements are formulated over the same pair of measure spaces $(\mu,\nu_\alpha)$, the Riesz--Thorin theorem \cite{Graf} applies and gives
\[
\|U\|_{L^{p}(\mu) \to L^{p'}(\nu_\alpha)} \leq \|U\|_{L^2(\mu) \to L^2(\nu_\alpha)}^{1-\theta} \|U\|_{L^1(\mu) \to L^{\infty}(\nu_\alpha)}^{\theta}\le \Gamma(\alpha+1)^{-\left(\frac2p-1\right)}.
\]
It remains to observe that
\begin{align*}
\|Ug\|^{p'}_{L^{p'}(\nu_\alpha)}
&=\sum_{n\ge0}\lambda_n^\alpha(\lambda_n^\alpha)^{-p'/2}|\widehat f_\alpha(n)|^{p'}\\
&=\sum_{n\ge0}(\lambda_n^\alpha)^{(2-p')/2}|\widehat f_\alpha(n)|^{p'}
=\|\widehat f_\alpha\|_{{l}^{p'}_\alpha}^{p'}.
\end{align*}
The cases $p=1$ and $p=2$ are Lemma \ref{lem1} and \eqref{pe}, respectively.
\end{proof}
The next two statements are the duals of Lemma \ref{lem1} and Theorem \ref{hyl}; they will give the counterpart of our main estimate in the range $p\ge2$.
\begin{lemma}[${l}^{1}_{\alpha}$--$\mathcal{L}^{\infty}_{\alpha}$ estimate]\label{lem1dual}
Let $\alpha\ge0$ and let $\{a_n\}_{n\ge0}$ be a sequence of complex numbers with $\sum_{n\ge0}(\lambda^{\alpha}_n)^{1/2}|a_n|<\infty$. Then the series $g:=\sum_{n\ge0}a_nL^{(\alpha)}_n$ converges in the sense that $\sum_{n\ge0}a_n\,e^{-x/2}L^{(\alpha)}_n(x)$ converges absolutely and uniformly on $[0,\infty)$; moreover $g\in\mathcal{L}^{\infty}_{\alpha}$, $\widehat{g}_{\alpha}(n)=a_n$ for every $n$, and
\begin{equation}\label{l1Linfty}
\|g\|_{\mathcal{L}^{\infty}_{\alpha}}\ \le\ \frac{1}{\Gamma(\alpha+1)}\,\|\widehat{g}_{\alpha}\|_{{l}^{1}_{\alpha}}.
\end{equation}
\end{lemma}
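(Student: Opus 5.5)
The plan is to use Watson's bound \eqref{westimate} termwise, then identify the coefficients of $g$ by a dominated-convergence argument.

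\emph{Uniform convergence.} Since $L_n^{(\alpha)}=(\lambda_n^\alpha)^{-1/2}{\rm L}_n^{(\alpha)}$, the bound \eqref{westimate} gives, for $x\ge0$,
\[
|e^{-x/2}L_n^{(\alpha)}(x)|\le(\lambda_n^\alpha)^{-1/2}\frac{\lambda_n^\alpha}{\Gamma(\alpha+1)}=\frac{(\lambda_n^\alpha)^{1/2}}{\Gamma(\alpha+1)}.
\]
Hence $|a_n e^{-x/2}L_n^{(\alpha)}(x)|\le \Gamma(\alpha+1)^{-1}(\lambda_n^\alpha)^{1/2}|a_n|$. By hypothesis these majorants are summable, so the Weierstrass M-test yields absolute and uniform convergence on $[0,\infty)$. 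We define $g(x)$ as $e^{x/2}$ times the limit, for $x>0$.

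\emph{The norm bound.} This follows at once by summing the same majorants:
\[
\sup_x|e^{-x/2}g(x)|\le\Gamma(\alpha+1)^{-1}\sum_n(\lambda_n^\alpha)^{1/2}|a_n|.
\]
By \eqref{norms} with $p=1$, the weight is $(\lambda_n^\alpha)^{(2-1)/2}=(\lambda_n^\alpha)^{1/2}$. So the right-hand side equals $\Gamma(\alpha+1)^{-1}\|a\|_{l^1_\alpha}$, and this is \eqref{l1Linfty} once we know $a_n=\widehat g_\alpha(n)$. In particular $g\in\mathcal L^\infty_\alpha$.

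\emph{Identification of the coefficients.} This is the only step needing care, because it requires interchanging the sum with the integral defining $\widehat g_\alpha(m)$. We write
\[
\widehat g_\alpha(m)=\int_0^\infty e^{-x/2}g(x)\,e^{-x/2}L_m^{(\alpha)}(x)\,x^\alpha\,dx.
\]
The partial sums $e^{-x/2}\sum_{n\le N}a_nL_n^{(\alpha)}$ are bounded uniformly in $N$ and $x$ by the constant $\Gamma(\alpha+1)^{-1}\|a\|_{l^1_\alpha}$. The factor $e^{-x/2}L_m^{(\alpha)}(x)x^\alpha$ is integrable on $(0,\infty)$ because $\alpha\ge0$ and the polynomial is killed by the exponential. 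Dominated convergence therefore lets us integrate termwise. Orthonormality \eqref{ort} then gives
\[
\widehat g_\alpha(m)=\sum_n a_n\delta_{nm}=a_m.
\]
Since $g\in\mathcal L^\infty_\alpha$, the integral defining $\widehat g_\alpha(m)$ converges absolutely, so the coefficients are well defined.
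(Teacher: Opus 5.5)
Your proof is correct and follows the paper's argument essentially step for step. Watson's estimate gives the uniform bound $e^{-x/2}|L_n^{(\alpha)}(x)|\le(\lambda_n^\alpha)^{1/2}/\Gamma(\alpha+1)$, which yields both the uniform absolute convergence and \eqref{l1Linfty}; termwise integration against the integrable factor $e^{-x/2}L_m^{(\alpha)}(x)x^\alpha$, combined with \eqref{ort}, then identifies $\widehat g_\alpha(m)=a_m$.
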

\begin{proof}
By \eqref{westimate} and $L^{(\alpha)}_n=(\lambda^{\alpha}_n)^{-1/2}{\rm L}^{(\alpha)}_n$, for every $x\ge0$
\begin{equation}\label{unifbound}
e^{-x/2}\big|L^{(\alpha)}_n(x)\big|\ \le\ (\lambda^{\alpha}_n)^{-1/2}\,{\rm L}^{(\alpha)}_n(0)\ =\ \frac{(\lambda^{\alpha}_n)^{1/2}}{\Gamma(\alpha+1)}.
\end{equation}
Hence $\sum_{n}|a_n|\,e^{-x/2}|L^{(\alpha)}_n(x)|\le\Gamma(\alpha+1)^{-1}\sum_{n}(\lambda^{\alpha}_n)^{1/2}|a_n|<\infty$ uniformly in $x\ge0$, which gives both the uniform convergence and \eqref{l1Linfty}, the last norm being $\sum_{n}(\lambda^{\alpha}_n)^{1/2}|a_n|$ by \eqref{norms} with $p=1$. Finally, $e^{-x/2}g$ is bounded and $e^{-x/2}L^{(\alpha)}_m\in L^1(x^{\alpha}dx)$, so the series may be integrated term by term against $e^{-x/2}L^{(\alpha)}_m(x)x^{\alpha}$, and \eqref{ort} gives $\widehat{g}_{\alpha}(m)=a_m$.
\end{proof}
\begin{theorem}[Dual Hausdorff--Young inequality]\label{hyldual}
 Let $\alpha\geq 0$ and $2\le p\le\infty$, and let $p'\in[1,2]$ be the conjugate exponent of $p$. Put $c^{*}_{\alpha,p}:=\Gamma(\alpha+1)^{-\left(1-\frac{2}{p}\right)}$.
\begin{itemize}
\item[\rm(i)] If $\{a_n\}_{n\ge0}$ satisfies $\|a\|_{{l}^{p'}_{\alpha}}:=\big(\sum_{n}(\lambda^{\alpha}_n)^{\frac{2-p'}{2}}|a_n|^{p'}\big)^{1/p'}<\infty$, then the series $\sum_{n}a_nL^{(\alpha)}_n$ converges in $\mathcal{L}^p_{\alpha}$ (uniformly after multiplication by $e^{-x/2}$ when $p=\infty$) to a function $g$ with $\widehat{g}_{\alpha}(n)=a_n$ for all $n$ and
\begin{equation}\label{hyineqdual}
\|g\|_{\mathcal{L}^p_{\alpha}}\ \leq\ c^{*}_{\alpha,p}\,\|a\|_{{l}^{p'}_{\alpha}}.
\end{equation}
\item[\rm(ii)] Consequently, if $f\in\mathcal{L}^p_{\alpha}$ then
\begin{equation}\label{hyineqdual2}
\|f\|_{\mathcal{L}^p_{\alpha}}\ \leq\ c^{*}_{\alpha,p}\,\|\widehat{f}_{\alpha}\|_{{l}^{p'}_{\alpha}}.
\end{equation}
\end{itemize}
\end{theorem}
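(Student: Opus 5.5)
The plan is to prove (i) first on finitely supported sequences by duality with Theorem \ref{hyl}, then pass to general sequences by a Cauchy argument; (ii) then follows from (i) together with coefficient uniqueness.

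\textbf{Finite sums.} Fix a finitely supported sequence $a$ and put $g_N=\sum_{n\le N}a_nL_n^{(\alpha)}$. For $2\le p<\infty$ the norm of $g_N$ is computed by testing against $\phi\in\mathcal{P}_\alpha$: by Lemma \ref{dense} this set is dense in $\mathcal L^{p'}_\alpha$, since $p'<\infty$. Orthogonality \eqref{ort} gives
\[
\langle g_N,\phi\rangle_{\mathcal L^2_\alpha}=\sum_{n}a_n\overline{\widehat\phi_\alpha(n)}=\sum_n \lambda_n^\alpha\,\big((\lambda_n^\alpha)^{-1/2}a_n\big)\overline{\big((\lambda_n^\alpha)^{-1/2}\widehat\phi_\alpha(n)\big)} .
\]
This is a pairing in $L^2(\nu_\alpha)$, with $\nu_\alpha$ as in Remark \ref{scale}. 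Hölder's inequality in $L^{p'}(\nu_\alpha)$--$L^{p}(\nu_\alpha)$ bounds it by $\|a\|_{l^{p'}_\alpha}\|\widehat\phi_\alpha\|_{l^{p}_\alpha}$. Since $p'\in[1,2]$, Theorem \ref{hyl} applied to $\phi$ with exponent $p'$ gives $\|\widehat\phi_\alpha\|_{l^{p}_\alpha}\le c_{\alpha,p'}\|\phi\|_{\mathcal L^{p'}_\alpha}$. Moreover $c_{\alpha,p'}=\Gamma(\alpha+1)^{-(2/p'-1)}=\Gamma(\alpha+1)^{-(1-2/p)}=c^*_{\alpha,p}$. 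The duality $(L^{p'}(\mu))^*=L^p(\mu)$ under the identification $g\mapsto e^{-x/2}g$, restricted to a dense set, then gives $\|g_N\|_{\mathcal L^p_\alpha}\le c^*_{\alpha,p}\|a\|_{l^{p'}_\alpha}$. For $p=\infty$ (so $p'=1$) I would invoke Lemma \ref{lem1dual} directly.

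\textbf{General sequences.} For a general $a$ with $\|a\|_{l^{p'}_\alpha}<\infty$, apply the finite-sum bound to the block $\sum_{M<n\le N}a_nL_n^{(\alpha)}$. Its norm is at most $c^*_{\alpha,p}$ times the $l^{p'}_\alpha$ tail of $a$, which tends to $0$ because $p'<\infty$. So the partial sums are Cauchy in $\mathcal L^p_\alpha$, and they converge to some $g$ satisfying \eqref{hyineqdual}. For $p=\infty$ the uniform convergence is already part of Lemma \ref{lem1dual}. To identify the coefficients, use that $e^{-x/2}L_m^{(\alpha)}\in L^{p'}(\mu)$ (polynomial times $e^{-x/2}$). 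Then $\widehat g_\alpha(m)=\lim_N\widehat{(g_N)}_\alpha(m)=a_m$ by Hölder's inequality and continuity.

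\textbf{Part (ii).} Take $f\in\mathcal L^p_\alpha$ with $\|\widehat f_\alpha\|_{l^{p'}_\alpha}<\infty$; otherwise there is nothing to prove. Let $g$ be given by (i) with $a=\widehat f_\alpha$. Then $f-g\in\mathcal L^p_\alpha$ has all Laguerre coefficients zero, so $\int e^{-x/2}x^k\,(e^{-x/2}(f-g))\,x^\alpha dx=0$ for all $k$. The Laplace-transform argument in the proof of Lemma \ref{dense}, run with $h=e^{-x/2}(f-g)\in L^{p}(\mu)$ in place of $L^{p'}$, applies verbatim; Hölder still gives analyticity on $\Re z>0$, and even $p=\infty$ is fine. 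Hence $f=g$ a.e., and \eqref{hyineqdual2} follows from \eqref{hyineqdual}.

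\textbf{Main obstacle.} The delicate step is the duality identity in the finite-sum bound. One must check that $\sup|\langle g_N,\phi\rangle|$ over $\phi\in\mathcal P_\alpha$ with $\|\phi\|_{\mathcal L^{p'}_\alpha}\le1$ really equals $\|g_N\|_{\mathcal L^p_\alpha}$. This holds because the pairing \eqref{inner} is exactly the $L^p(\mu)$--$L^{p'}(\mu)$ pairing of $e^{-x/2}g_N$ and $e^{-x/2}\phi$, and density of polynomials in $\mathcal L^{p'}_\alpha$ is Lemma \ref{dense}.
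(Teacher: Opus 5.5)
Your proof is correct. Part (i) takes a different route from the paper; part (ii) is essentially the same.

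\textbf{Part (i).} The paper does not argue by duality. It introduces the synthesis operator $Va=\sum_n(\lambda_n^\alpha)^{1/2}a_nL_n^{(\alpha)}$. This operator is an isometry $L^2(\nu_\alpha)\to\mathcal{L}^2_\alpha$ by orthonormality. It is bounded $L^1(\nu_\alpha)\to\mathcal{L}^\infty_\alpha$ with norm at most $\Gamma(\alpha+1)^{-1}$ by Lemma \ref{lem1dual}. The paper then applies Riesz--Thorin a second time, to $a\mapsto e^{-x/2}Va$, mirroring the proof of Theorem \ref{hyl}.

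You instead treat $V$ as, in effect, the adjoint of the analysis operator $U$ from Theorem \ref{hyl}. You transfer the bound of Theorem \ref{hyl} at exponent $p'$ through H\"older's inequality in $L^{p'}(\nu_\alpha)$--$L^{p}(\nu_\alpha)$. You then norm $L^p(\mu)$ by the dense set $e^{-x/2}\mathcal{P}_\alpha\subset L^{p'}(\mu)$.

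What your route buys:
\begin{itemize}
\item It avoids a second interpolation.
\item It makes the identity $c^{*}_{\alpha,p}=c_{\alpha,p'}$ transparent.
\item Your Cauchy argument handles explicitly the passage from finitely supported to general sequences and the identification $\widehat g_\alpha(m)=a_m$. The paper leaves this implicit in its use of Riesz--Thorin; there it rests on the embedding $L^{p'}(\nu_\alpha)\subset L^2(\nu_\alpha)$, which holds because $\lambda_n^\alpha\ge\lambda_0^\alpha>0$.
\end{itemize}
What the paper's route buys: it is independent of Theorem \ref{hyl}. In part (i) it also does not need Lemma \ref{dense}, relying only on the two endpoint estimates.

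\textbf{Part (ii).} The difference is cosmetic. The paper combines Lemma \ref{dense} in $L^{p'}(\mu)$ with the fact that $w\in L^p(\mu)$ annihilating a dense subspace of $L^{p'}(\mu)$ must vanish. You rerun the Laplace-transform argument of that lemma directly for $h\in L^{p}(\mu)$. This is equally valid, including $p=\infty$.
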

\begin{remark}\label{needf}
The hypothesis $f\in\mathcal{L}^p_{\alpha}$ in {\rm(ii)} cannot be dropped in favour of the finiteness of $\|\widehat{f}_{\alpha}\|_{{l}^{p'}_{\alpha}}$ alone. Indeed, by the classical example of Stieltjes, $\int_{0}^{\infty}x^{k}e^{-x^{1/4}}\sin\big(x^{1/4}\big)dx=0$ for every $k\in\mathbb{N}_0$, so the function $f(x)=e^{x}x^{-\alpha}e^{-x^{1/4}}\sin\big(x^{1/4}\big)$ has an absolutely convergent Laguerre transform \eqref{LT-2} with $\widehat{f}_{\alpha}(n)=0$ for every $n$, while $e^{-x/2}f$ is unbounded and belongs to no $L^p(x^{\alpha}dx)$.
\end{remark}
\begin{proof}
{\rm(i)} We keep the measures $d\mu(x)=x^{\alpha}dx$ and $\nu_\alpha(\{n\})=\lambda^{\alpha}_n$ of the proof of Theorem \ref{hyl} and consider the map
\[
Va:=\sum_{n\ge0}(\lambda^{\alpha}_n)^{1/2}a_n\,L^{(\alpha)}_n.
\]
By the computation in the proof of Theorem \ref{hyl}, for every $q\in[1,\infty]$,
\[
\|a\|_{L^{q}(\nu_\alpha)}=\Big(\sum_{n\ge0}(\lambda^{\alpha}_n)^{\frac{2-q}{2}}\big|(\lambda^{\alpha}_n)^{1/2}a_n\big|^{q}\Big)^{1/q}.
\] By \eqref{pe}, $\|Va\|^2_{\mathcal{L}^2_{\alpha}}=\sum_n\lambda^{\alpha}_n|a_n|^2=\|a\|^2_{L^2(\nu_\alpha)}$, so $V:L^2(\nu_\alpha)\to\mathcal{L}^2_{\alpha}$ is an isometry; and by Lemma \ref{lem1dual} applied to the coefficients $b_n:=(\lambda^{\alpha}_n)^{1/2}a_n$, whose hypothesis $\sum_n(\lambda^{\alpha}_n)^{1/2}|b_n|=\sum_n\lambda^{\alpha}_n|a_n|<\infty$ is exactly $a\in L^1(\nu_\alpha)$,
\[
\|Va\|_{\mathcal{L}^{\infty}_{\alpha}}\le\frac{1}{\Gamma(\alpha+1)}\sum_{n\ge0}\lambda^{\alpha}_n|a_n|=\frac{1}{\Gamma(\alpha+1)}\|a\|_{L^{1}(\nu_\alpha)} ,
\]
so $V:L^{1}(\nu_\alpha)\to\mathcal{L}^{\infty}_{\alpha}$ is bounded with norm at most $\Gamma(\alpha+1)^{-1}$; in both cases $\widehat{(Va)}_{\alpha}(n)=(\lambda^{\alpha}_n)^{1/2}a_n$, by \eqref{ort} and Lemma \ref{lem1dual} respectively. For $0<\theta<1$ define $p,p'$ by
\[
\frac1p=\frac{1-\theta}{2},\qquad \frac{1}{p'}=\frac{1-\theta}{2}+\frac{\theta}{1},
\]
so that $p\in(2,\infty)$, $\theta=1-\frac2p$ and $p'$ is conjugate to $p$. Apply Riesz--Thorin to the conjugated operator $a\mapsto e^{-x/2}Va$, whose endpoint target spaces are $L^2(\mu)$ and $L^\infty(\mu)$ over the same measure $d\mu=x^\alpha dx$. The Riesz--Thorin theorem \cite{Graf} therefore applies and gives $\|V\|_{L^{p'}(\nu_\alpha)\to\mathcal{L}^p_{\alpha}}\le\Gamma(\alpha+1)^{-\theta}$. Applying this to the sequence $\{(\lambda^{\alpha}_n)^{-1/2}a_n\}$, whose $L^{p'}(\nu_\alpha)$ norm is $\|a\|_{{l}^{p'}_{\alpha}}$, gives \eqref{hyineqdual}. The cases $p=2$ and $p=\infty$ are \eqref{pe} and Lemma \ref{lem1dual}.
{\rm(ii)} We may assume $\|\widehat{f}_{\alpha}\|_{{l}^{p'}_{\alpha}}<\infty$. Let $g$ be the function produced by {\rm(i)} for $a_n=\widehat{f}_{\alpha}(n)$, so that $g\in\mathcal{L}^p_{\alpha}$ and $\widehat{g}_{\alpha}=\widehat{f}_{\alpha}$. Then $w:=e^{-x/2}(f-g)\in L^{p}(\mu)$ satisfies $\int_{0}^{\infty}w(x)\,e^{-x/2}L^{(\alpha)}_m(x)\,x^{\alpha}dx=0$ for every $m$. Since $2\le p\le\infty$ we have $1\le p'<\infty$, so by Lemma \ref{dense} the functions $e^{-x/2}L^{(\alpha)}_m$ span a dense subspace of $L^{p'}(\mu)$, and $w$ annihilates it; hence $w=0$ a.e.\ and $f=g$ a.e. Now \eqref{hyineqdual2} is \eqref{hyineqdual}.
\end{proof}
\subsection{Laguerre translation operator}
In \cite[p. 21]{watson}, a product formula for Laguerre polynomials was derived: for $\alpha>-\frac12$,
\begin{equation}\begin{split}\label{pf}
 \frac{n! {\rm L}_n^{(\alpha)}(x) {\rm L}_n^{(\alpha)}(y) }{\Gamma(n+\alpha+1)}=\frac{1}{\sqrt{\pi}}\int\limits_{0}^{\pi}\exp{\left(-\sqrt{xy}\cos{\theta}\right)}\frac{J_{\alpha-\frac{1}{2}}\left(\sqrt{xy}\sin{\theta}\right)}{\left(\frac{1}{2}\sqrt{xy}\sin{\theta}\right)^{\alpha-\frac{1}{2}}}\sin^{2\alpha}{\theta}&\\
 \times {\rm L}_n^{(\alpha)}\left(x+y+2\sqrt{xy}\cos{\theta}\right)d\theta&,
 \end{split}\end{equation}
 where $J_{\nu}(\cdot)$ is the Bessel function of the first kind of order $\nu.$ The product formulas have several applications.
 \textbf{i)} In particular, using the product formula \eqref{pf}, for $ \alpha\geq0,$ the estimate \eqref{westimate} for Laguerre polynomials was derived. Using the identity ${\rm L}_n^{(\alpha)}(0) = \binom{n+\alpha}{n}=\frac{\Gamma(n+\alpha+1)}{n!\Gamma(\alpha+1)},$
as presented in Szeg\H{o}'s seminal work \cite[p.101, (5.1.7)]{Szego}, the estimate \eqref{westimate} can be reformulated in the compact form
\begin{equation}\label{est1}
 e^{-x/2}\big|{\rm L}_n^{(\alpha)}(x)\big|\leq {\rm L}_n^{(\alpha)}(0),~~~~x\geq 0,\ \alpha\ge0.
\end{equation}
Equivalently, the results can be written in terms of the normalised Laguerre functions
\begin{equation*}
 R_{n}^{(\alpha)}(h):=\frac{{\rm L}_{n}^{(\alpha)}(h)}{{\rm L}_{n}^{(\alpha)}(0)},~~~h\geq0,
\end{equation*}
which are orthogonal on $[0,\infty)$ \cite{GM1} (see also \cite{GM}), and by \eqref{est1} we have
\begin{equation}\label{est2}
 e^{-h/2}|R_{n}^{(\alpha)}(h)|\leq1,\qquad\text{equivalently}\qquad |R_{n}^{(\alpha)}(h)|\le e^{h/2},~~~h\geq0.
\end{equation}
\begin{remark} The normalised Laguerre function equals the confluent hypergeometric function \cite{Szego}, or Kummer's function of the first kind, $R_{n}^{(\alpha)}(z)={}_1F_1(-n;\alpha+1;z)=M(-n;\alpha+1;z).$
 However, for consistency, we adopt the notation from \cite{GM1}.
\end{remark}
\textbf{ii) } The product formula \eqref{pf} provides the basis for the definition of the generalised translation operator associated with the Laguerre translation for $ \alpha>-\frac{1}{2}.$ Let us denote the Laguerre translation operator by $T_{h}^{(\alpha)}(f;x)$ for $x,h>0$ given as follows
\begin{equation}\begin{split}\label{tr-op}
 T_{h}^{(\alpha)}(f;x)=\frac{\Gamma(\alpha+1)2^{\alpha}}{\sqrt{2\pi}}\int\limits_{0}^{\pi}f\left(x+h+2\sqrt{xh}\cos{\theta}\right)
 \exp{\left(-\sqrt{xh}\cos{\theta}\right)}&\\
 \times\frac{J_{\alpha-\frac{1}{2}}\left(\sqrt{xh}\sin{\theta}\right)}{\left(\sqrt{xh}\sin{\theta}\right)^{\alpha-\frac{1}{2}}}\sin^{2\alpha}{\theta} d\theta&,
 \end{split}\end{equation}
with step $h\in[0;\infty)$, and $T_{h}^{(\alpha)}(f;0)=f(h)$ for $h>0$; $T_{0}^{(\alpha)}(f;x)=f(x)$ for $x\geq0$, see \cite[p.162, formula (1.2)]{GM1} (see also \cite[Section~2]{AV}, \cite{GM}).
\begin{lemma}[Norm of the Laguerre translation]\label{Tnorm}
Let $\alpha\ge0$, $h\ge0$ and $1\le p\le\infty$. For every $f\in\mathcal{L}^p_{\alpha}(\mathbb{R}^+)$ the integral \eqref{tr-op} converges absolutely for almost every $x>0$, $T^{(\alpha)}_hf\in\mathcal{L}^p_{\alpha}$, and
\begin{equation}\label{Tnormbound}
\big\|T^{(\alpha)}_{h}f\big\|_{\mathcal{L}^p_{\alpha}}\ \le\ e^{h/2}\,\|f\|_{\mathcal{L}^p_{\alpha}} .
\end{equation}
For $p=\infty$ the constant $e^{h/2}$ cannot be improved. In particular the norms of $T^{(\alpha)}_{h}$ on $\mathcal{L}^p_{\alpha}$, $0\le h\le1$, are bounded by $e^{1/2}$ for every $1\le p\le\infty$.
\end{lemma}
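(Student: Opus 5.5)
The plan is to bound the kernel of \eqref{tr-op} pointwise, absorb the exponential factors into the weight $e^{-x/2}$, and reduce everything to a positive averaging operator that is a contraction on $L^p(x^\alpha dx)$.

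\emph{Step 1: pointwise bound of the kernel.} Since $\alpha\ge0$ we have $\nu:=\alpha-\frac12\ge-\frac12$. For such $\nu$ the classical bound
\[
\left|\frac{J_\nu(u)}{u^\nu}\right|\le\frac{1}{2^\nu\Gamma(\nu+1)},\qquad u\ge0,
\]
follows from Poisson's integral representation. Inserting it into \eqref{tr-op} gives
\[
|T_h^{(\alpha)}(f;x)|\le c_\alpha\int_0^\pi |f(z_\theta)|\,e^{-\sqrt{xh}\cos\theta}\sin^{2\alpha}\theta\,d\theta,
\qquad z_\theta:=x+h+2\sqrt{xh}\cos\theta,
\]
with $c_\alpha=\dfrac{\Gamma(\alpha+1)}{\sqrt\pi\,\Gamma(\alpha+\frac12)}$. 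Since $\int_0^\pi\sin^{2\alpha}\theta\,d\theta=\sqrt\pi\,\Gamma(\alpha+\frac12)/\Gamma(\alpha+1)$, the measure $d\sigma_\alpha(\theta)=c_\alpha\sin^{2\alpha}\theta\,d\theta$ is a probability measure on $[0,\pi]$.

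\emph{Step 2: absorbing the exponential weight.} Put $g=e^{-\cdot/2}f$. The key identity is
\[
e^{-x/2}e^{-\sqrt{xh}\cos\theta}=e^{h/2}e^{-z_\theta/2}.
\]
Multiplying Step 1 by $e^{-x/2}$ therefore yields
\[
e^{-x/2}|T_h^{(\alpha)}(f;x)|\le e^{h/2}\,B_h|g|(x),\qquad
B_hG(x):=\int_0^\pi G(z_\theta)\,d\sigma_\alpha(\theta).
\]
So it suffices to show that $B_h$ is a contraction on $L^p(x^\alpha dx)$ for $1\le p\le\infty$. Absolute convergence of \eqref{tr-op} for a.e.\ $x$ then follows from $B_h|g|<\infty$ a.e.

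\emph{Step 3: $B_h$ is a contraction (the main step).} The case $p=\infty$ is immediate because $\sigma_\alpha$ is a probability measure. For $1\le p<\infty$, Jensen's inequality gives $(B_h|g|)^p\le B_h(|g|^p)$, so it suffices to prove the $L^1$ bound
\[
\int_0^\infty B_hG(x)\,x^\alpha dx\le\int_0^\infty G(z)\,z^\alpha dz,\qquad G\ge0.
\]
For this, substitute $x=s^2$, $h=t^2$. Then $z_\theta=|s+te^{i\theta}|^2$, and $B_h$ becomes the classical Gegenbauer/Bessel generalised translation of order $\alpha$ acting on $L^1(s^{2\alpha+1}ds)$. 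That operator is symmetric and preserves integrals. Concretely, I would apply Tonelli's theorem and change variables from $(s,\theta)$ to $(r,\phi)$ with $r=|s+te^{i\theta}|$. This is where I expect the real work: one has to check that the resulting Jacobian together with the factor $\sin^{2\alpha}\theta$ reproduces exactly the weight $r^{2\alpha+1}$ times a probability density in $\phi$, which amounts to a symmetric-kernel (triangle) computation. If that calculation becomes messy, I would instead verify the $L^1$ identity on test functions $G(z)=e^{-\lambda z}$, where both sides are explicit via the Bessel integral $\int_0^\pi e^{a\cos\theta}\sin^{2\alpha}\theta\,d\theta$, and then conclude by uniqueness of the Laplace transform. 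Combining Steps 1--3 gives \eqref{Tnormbound}; the uniform bound $e^{1/2}$ for $0\le h\le1$ follows at once.

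\emph{Step 4: sharpness for $p=\infty$.} Take $f(x)=e^{x/2}$, so that $\|f\|_{\mathcal L^\infty_\alpha}=1$. By the definition, $T_h^{(\alpha)}(f;0)=f(h)=e^{h/2}$. As $x\to0^+$, the integrand in \eqref{tr-op} converges uniformly in $\theta$ to $f(h)$ times the normalised weight, by continuity of $f$ and of $J_\nu(u)/u^\nu$ at $u=0$. Hence $e^{-x/2}T_h^{(\alpha)}(f;x)\to e^{h/2}$, the essential supremum is at least $e^{h/2}$, and the constant in \eqref{Tnormbound} cannot be improved for $p=\infty$.
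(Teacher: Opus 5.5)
Your proposal is correct and follows the paper's proof essentially step for step. The shared steps are: the Poisson-integral bound on the Bessel factor (the paper handles $\nu=-\tfrac12$ separately via $j_{-1/2}=\cos$); the identity $e^{-x/2}e^{-\sqrt{xh}\cos\theta}=e^{h/2}e^{-z_\theta/2}$; reduction to a probability average plus Jensen; the mass-preservation identity $\int_0^\infty (M_hg)(x)\,x^\alpha\,dx=\int_0^\infty g(z)\,z^\alpha\,dz$, obtained from $x=s^2$, Tonelli and the symmetric triangle computation, which the paper carries out explicitly with $\Delta(s,a,\zeta)$ and you only sketch; and sharpness from the limit $x\to0^+$. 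Your Laplace-transform fallback for that identity would also work: testing on $G(z)=e^{-\lambda z}$ gives $\Gamma(\alpha+1)\lambda^{-\alpha-1}$ on both sides, and a positive measure on $(0,\infty)$ is determined by its Laplace transform.
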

\begin{proof}
For $h=0$ all assertions follow from $T_0^{(\alpha)}=I$, so assume $h>0$.
Write $j_\nu(t):=\Gamma(\nu+1)(2/t)^{\nu}J_\nu(t)$ for the normalised Bessel function, so that $j_\nu(0)=1$, and put
\[
c_\alpha:=\frac{\Gamma(\alpha+1)}{\sqrt\pi\,\Gamma(\alpha+\tfrac12)}=\Big(\int_0^\pi\sin^{2\alpha}\theta\,d\theta\Big)^{-1}.
\]
Since $\frac{\Gamma(\alpha+1)2^{\alpha}}{\sqrt{2\pi}}\cdot\frac{1}{\Gamma(\alpha+\frac12)2^{\alpha-1/2}}=c_\alpha$, formula \eqref{tr-op} reads
\[
T^{(\alpha)}_{h}(f;x)=c_\alpha\int_0^\pi f(z_\theta)\,e^{-\sqrt{xh}\cos\theta}\,j_{\alpha-\frac12}\big(\sqrt{xh}\sin\theta\big)\sin^{2\alpha}\theta\,d\theta,
\]
where $z_\theta:=x+h+2\sqrt{xh}\cos\theta$.
By Poisson's integral,
\[
j_\nu(t)=\frac{\Gamma(\nu+1)}{\sqrt\pi\,\Gamma(\nu+\frac12)}\int_0^\pi\cos(t\cos\varphi)\sin^{2\nu}\varphi\,d\varphi\qquad(\nu>-\tfrac12),
\]
while $j_{-1/2}(t)=\cos t$; hence $|j_{\alpha-1/2}|\le1$ on $\mathbb{R}$ for every $\alpha\ge0$. Put $F:=e^{-x/2}f$, so that $\|f\|_{\mathcal{L}^p_{\alpha}}=\|F\|_{L^p(x^\alpha dx)}$, and observe that $e^{-x/2}\,e^{z_\theta/2}\,e^{-\sqrt{xh}\cos\theta}=e^{h/2}$ identically. Therefore
\begin{equation}\label{Tpointwise}
e^{-x/2}\big|T^{(\alpha)}_{h}(f;x)\big|\ \le\ e^{h/2}\,c_\alpha\int_0^\pi|F(z_\theta)|\sin^{2\alpha}\theta\,d\theta\ =:\ e^{h/2}\,(M_h|F|)(x).
\end{equation}
The operator $M_h$ is an average with respect to the probability measure $c_\alpha\sin^{2\alpha}\theta\,d\theta$ on $(0,\pi)$. For $p=\infty$, \eqref{Tpointwise} gives \eqref{Tnormbound} at once. For $p<\infty$, apply Tonelli's theorem to $g=|F|^p$. The identity below shows that $M_h(|F|^p)$ is finite almost everywhere and has integral $\|F\|_{L^p(x^\alpha dx)}^p$. Jensen's inequality gives $(M_h|F|)^p\le M_h(|F|^p)$, proving both absolute convergence of \eqref{tr-op} almost everywhere and \eqref{Tnormbound}. The required identity is
\begin{equation}\label{masspres}
\int_0^\infty (M_hg)(x)\,x^{\alpha}\,dx=\int_0^\infty g(z)\,z^{\alpha}\,dz\qquad\text{for every measurable } g\ge0 .
\end{equation}
To prove \eqref{masspres} substitute $x=s^2$, $h=a^2$, $z_\theta=\zeta^2$. For fixed $s$, as $\theta$ runs over $(0,\pi)$ the variable $\zeta=\sqrt{s^2+a^2+2sa\cos\theta}$ runs over $(|s-a|,s+a)$, with $\sin\theta\,d\theta=\zeta\,d\zeta/(sa)$ and $\sin\theta=\Delta/(2sa)$, where
\[
\Delta=\Delta(s,a,\zeta):=\big[\big((s+a)^2-\zeta^2\big)\big(\zeta^2-(s-a)^2\big)\big]^{1/2}
\]
is symmetric in $(s,a,\zeta)$. Hence, by Tonelli's theorem (the condition $|s-a|<\zeta<s+a$ is symmetric in $s$ and $\zeta$),
\begin{align*}
\int_0^\infty (M_hg)(x)\,x^{\alpha}dx
&=2c_\alpha\int_0^\infty\!\!\int_{|s-a|}^{s+a} g(\zeta^2)\Big(\frac{\Delta}{2sa}\Big)^{2\alpha-1}\frac{\zeta}{sa}\,s^{2\alpha+1}\,d\zeta\,ds\\
&=\frac{2c_\alpha}{(2a)^{2\alpha-1}a}\int_0^\infty g(\zeta^2)\,\zeta\Big[\int_{|\zeta-a|}^{\zeta+a}\Delta^{2\alpha-1}\,s\,ds\Big]d\zeta .
\end{align*}
The inner integral is computed by the reverse substitution $s^2=\zeta^2+a^2+2\zeta a\cos\varphi$, $0<\varphi<\pi$, for which $s\,ds=\zeta a\sin\varphi\,d\varphi$ and $\Delta(s,a,\zeta)=2\zeta a\sin\varphi$:
\[
\int_{|\zeta-a|}^{\zeta+a}\Delta^{2\alpha-1}\,s\,ds=(2\zeta a)^{2\alpha-1}\zeta a\int_0^\pi\sin^{2\alpha}\varphi\,d\varphi=\frac{(2\zeta a)^{2\alpha-1}\zeta a}{c_\alpha}.
\]
Inserting this gives
\[
\int_0^\infty (M_hg)(x)x^{\alpha}dx=2\int_0^\infty g(\zeta^2)\zeta^{2\alpha+1}d\zeta=\int_0^\infty g(z)z^{\alpha}dz ,
\]
which is \eqref{masspres}. (For $\alpha=0$ the exponent $2\alpha-1=-1$ produces integrable endpoint singularities only; the line $x=h$ is a null set.)

Finally, $T^{(\alpha)}_{h}(f;x)\to f(h)$ as $x\to0^+$ for continuous $f$, so
\[
e^{-x/2}\big|T^{(\alpha)}_{h}(f;x)\big|\to e^{h/2}\,\big|e^{-h/2}f(h)\big| ;
\]
choosing $f$ with $|e^{-h/2}f(h)|=\|f\|_{\mathcal{L}^\infty_{\alpha}}$ shows that $e^{h/2}$ is the norm of $T^{(\alpha)}_h$ on $\mathcal{L}^\infty_{\alpha}$.
\end{proof}
\begin{lemma}[Translation multiplier]\label{lemma23}
Let $\alpha\ge0$, $1\le p\le\infty$, and $f\in\mathcal L_\alpha^p$. Then
\[
\widehat{(T_h^{(\alpha)}f)}_\alpha(n)=R_n^{(\alpha)}(h)\widehat f_\alpha(n),
\qquad n\in\mathbb N_0,\quad h\ge0.
\]
\end{lemma}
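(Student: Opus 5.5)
We have to show that $\widehat{(T_h^{(\alpha)}f)}_\alpha(n)=R_n^{(\alpha)}(h)\widehat f_\alpha(n)$. The plan is to move $T_h^{(\alpha)}$ onto the Laguerre polynomial by a symmetry (self-adjointness) property of the translation kernel, and then evaluate $T_h^{(\alpha)}$ on ${\rm L}_n^{(\alpha)}$ by the product formula \eqref{pf}.

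\textbf{Step 1 (eigenfunction identity).} We first show that
\[
T_h^{(\alpha)}({\rm L}_n^{(\alpha)};x)=R_n^{(\alpha)}(h){\rm L}_n^{(\alpha)}(x).
\]
For this, rewrite \eqref{pf} with $y=h$ in the normalised form used in the proof of Lemma \ref{Tnorm}, with $j_{\alpha-1/2}$ and $c_\alpha$. The right-hand side of \eqref{pf} is then, up to constants, exactly $T_h^{(\alpha)}({\rm L}_n^{(\alpha)};x)$. The left-hand side is $\frac{n!}{\Gamma(n+\alpha+1)}{\rm L}_n^{(\alpha)}(x){\rm L}_n^{(\alpha)}(h)$, and the constants combine to give $1/{\rm L}_n^{(\alpha)}(0)$, since ${\rm L}_n^{(\alpha)}(0)=\Gamma(n+\alpha+1)/(n!\Gamma(\alpha+1))$. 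The constant bookkeeping is routine but must be checked against the normalisation $\frac{\Gamma(\alpha+1)2^\alpha}{\sqrt{2\pi}}$ in \eqref{tr-op}. As a sanity check, at $n=0$ both sides should equal $1$, because $T_h^{(\alpha)}1=1$ follows from the Poisson-type normalisation. The endpoints $x=0$ and $h=0$ follow directly from the conventions $T_h^{(\alpha)}(f;0)=f(h)$ and $T_0^{(\alpha)}=I$.

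\textbf{Step 2 (symmetry).} We next prove the duality
\[
\int_0^\infty e^{-x}x^\alpha\,T_h^{(\alpha)}(f;x)\,g(x)\,dx=\int_0^\infty e^{-x}x^\alpha f(x)\,T_h^{(\alpha)}(g;x)\,dx
\]
for $f\in\mathcal L^p_\alpha$ and $g$ a polynomial. Writing $F=e^{-x/2}f$ and $G=e^{-x/2}g$, the identity $e^{-x/2}e^{z_\theta/2}e^{-\sqrt{xh}\cos\theta}=e^{h/2}$ shows that the left side equals
\[
e^{h/2}c_\alpha\iint F(z_\theta)G(x)\,j_{\alpha-1/2}(\sqrt{xh}\sin\theta)\sin^{2\alpha}\theta\,d\theta\,x^\alpha dx.
\]
We then use the same substitution as in the proof of \eqref{masspres}: $x=s^2$, $h=a^2$, $z_\theta=\zeta^2$. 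In these variables the measure becomes a kernel proportional to $\Delta(s,a,\zeta)^{2\alpha-1}s\zeta\,ds\,d\zeta$ on the region $|s-a|<\zeta<s+a$, and this region is symmetric in $s,\zeta$. The Bessel argument is $\sqrt{xh}\sin\theta=sa\sin\theta=\Delta/2$, which is also symmetric in $s,\zeta$. So the kernel is symmetric under $s\leftrightarrow\zeta$, and swapping the roles of $x$ and $z$ gives the identity. Fubini is justified by absolute convergence. Indeed, $|j_{\alpha-1/2}|\le1$, and Tonelli together with \eqref{masspres} and Hölder give
\[
\iint|F(z_\theta)||G(x)|\ldots\le \|M_h|F|\|_{L^p}\|G\|_{L^{p'}}<\infty,
\]
because $G=e^{-x/2}\cdot$polynomial lies in every $L^{p'}(x^\alpha dx)$.

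\textbf{Step 3 (conclusion).} Take $g=L_n^{(\alpha)}$ in Step 2 and apply Step 1 to get
\[
\widehat{(T_hf)}_\alpha(n)=\int e^{-x}x^\alpha f\,T_hL_n^{(\alpha)}=R_n^{(\alpha)}(h)\widehat f_\alpha(n).
\]
Here Lemma \ref{Tnorm} guarantees that $T_hf\in\mathcal L^p_\alpha$, so the coefficient on the left exists.

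The main obstacle is Step 2, since no earlier result gives the symmetry directly. The plan is to recycle the change of variables from \eqref{masspres}, carrying along the extra factor $j_{\alpha-1/2}(\Delta/2)$; this factor is symmetric, so the computation should go through. For $\alpha=0$ the endpoint singularities of $\Delta^{-1}$ remain integrable, as noted in the proof of \eqref{masspres}, so the same argument applies there.
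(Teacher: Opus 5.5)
Your proposal is correct and follows essentially the same route as the paper. Both arguments get the eigenfunction identity $T_h^{(\alpha)}{\rm L}_n^{(\alpha)}=R_n^{(\alpha)}(h){\rm L}_n^{(\alpha)}$ from the product formula \eqref{pf}, then use the symmetry of the translation kernel after the substitution $z=x+h+2\sqrt{xh}\cos\theta$, with Fubini justified by $|j_{\alpha-1/2}|\le1$, Lemma \ref{Tnorm} and H\"older; the only difference is cosmetic, since the paper writes the kernel $K_h(x,z)$ explicitly through $d_h(x,z)$ (which equals your $\Delta/2$), whereas you check the symmetry in the $(s,\zeta)$ variables of \eqref{masspres}.
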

\begin{proof}
For a classical Laguerre polynomial, the product formula \eqref{pf} and the definition \eqref{tr-op} give
\[
T_h^{(\alpha)}{\rm L}_n^{(\alpha)}=R_n^{(\alpha)}(h){\rm L}_n^{(\alpha)};
\]
the constants agree because
\[
\frac{\Gamma(\alpha+1)2^\alpha}{\sqrt{2\pi}}\,
\frac{\sqrt\pi}{2^{\alpha-1/2}}=\Gamma(\alpha+1).
\]
For completeness, we extend the identity to all the stated spaces, including $p=\infty$; cf.\ \cite[Corollary~2]{GM1}. The case $h=0$ is immediate. For $h>0$, changing variables $z=x+h+2\sqrt{xh}\cos\theta$ in \eqref{tr-op} yields
\[
T_h^{(\alpha)}f(x)=\int_0^\infty f(z)K_h(x,z)e^{-z}z^\alpha\,dz.
\]
On $|\sqrt x-\sqrt h|<\sqrt z<\sqrt x+\sqrt h$, put
\[
d_h(x,z)=\frac12\left[2(xh+xz+hz)-x^2-h^2-z^2\right]^{1/2}.
\]
The kernel is zero outside this region and equals
\[
K_h(x,z)=\frac{\Gamma(\alpha+1)2^{\alpha-1}}{\sqrt{2\pi}}
\frac{e^{(x+h+z)/2}}{(xhz)^\alpha}
J_{\alpha-1/2}(d_h(x,z))d_h(x,z)^{\alpha-1/2}
\]
inside it. Thus $K_h(x,z)=K_h(z,x)$; this is also the kernel representation in \cite[Section~2]{AV}. If $T_h^+$ denotes the operator with kernel $|K_h|$, the proof of Lemma \ref{Tnorm}, including \eqref{Tpointwise}, gives $\|T_h^+f\|_{\mathcal L_\alpha^p}\le e^{h/2}\|f\|_{\mathcal L_\alpha^p}$. Since $L_n^{(\alpha)}\in\mathcal L_\alpha^{p'}$, H\"older's inequality bounds the absolute double integral for pairing $T_h f$ with $L_n^{(\alpha)}$ by
\[
e^{h/2}\|f\|_{\mathcal L_\alpha^p}\|L_n^{(\alpha)}\|_{\mathcal L_\alpha^{p'}}<\infty.
\]
Fubini's theorem and symmetry therefore give, for every $1\le p\le\infty$,
\begin{align*}
\widehat{(T_h^{(\alpha)}f)}_\alpha(n)
&=\int_0^\infty f(x)\,T_h^{(\alpha)}L_n^{(\alpha)}(x)e^{-x}x^\alpha\,dx\\
&=R_n^{(\alpha)}(h)\widehat f_\alpha(n).
\end{align*}
\end{proof}
\begin{remark}\label{notcontraction}
The multiplier $R_{n}^{(\alpha)}(h)={}_1F_1(-n;\alpha+1;h)$ is a polynomial of degree $n$ in $h$. Consequently:
\begin{itemize}
\item[(a)] the family $\{T_h^{(\alpha)}\}_{h\ge0}$ is not a semigroup. Indeed,
\[
R_1^{(\alpha)}(h_1)R_1^{(\alpha)}(h_2)-R_1^{(\alpha)}(h_1+h_2)
=\frac{h_1h_2}{(\alpha+1)^2}.
\]
The multiplier identity and Plancherel's theorem also give
\[
\|T_h^{(\alpha)}\|_{\mathcal L_\alpha^2\to\mathcal L_\alpha^2}
=\sup_n|R_n^{(\alpha)}(h)|\le e^{h/2}.
\]
These norms are unbounded as $h\to\infty$, already by the mode $n=1$, so the operators are not contractions for all $h$;
\item[(b)] for every fixed $n\ge1$, $|1-R_{n}^{(\alpha)}(h)|\to\infty$ as $h\to\infty$; already
$\big|1-R_{1}^{(\alpha)}(h)\big|=\frac{h}{\alpha+1}$.
\end{itemize}
For this reason all statements below involving the modulus of continuity are restricted to $0<t\le1$; see Remark \ref{whysmall}.
\end{remark}
It may be useful to summarise at this point the ways in which $T^{(\alpha)}_h$ departs from the Euclidean translation, since it is these that dictate the form of the results below. All entries in the last column are proved in, or immediate from, Lemma \ref{Tnorm} and Remarks \ref{notcontraction} and \ref{nosubadd}.
\begin{center}\footnotesize\setlength{\tabcolsep}{4pt}
\begin{tabular}{@{}lll@{}}
\toprule
 & $\tau_hf=f(\cdot+h)$ on $\mathbb{R}$ & $T^{(\alpha)}_h$ on $(0,\infty)$\\
\midrule
multiplier & $e^{i\lambda h}$ & $R^{(\alpha)}_n(h)={}_1F_1(-n;\alpha+1;h)$\\
its modulus & $\equiv1$ & $\le e^{h/2}$, unbounded in $h$\\
semigroup in $h$ & yes & no\\
norm on $L^p$ & $=1$ for every $h$ & $\le e^{h/2}$; $=e^{h/2}$ for $p=\infty$\\
subadditive for all $t>0$ & yes & no\\
\bottomrule
\end{tabular}
\end{center}
\subsection{Moduli of smoothness with respect to the Laguerre translation}
Throughout, for $\alpha\ge0$, $n\in\mathbb{N}_0$ and $h\ge0$ we write
\begin{equation}\label{rhodef}
\rho^{(\alpha)}_n(h):=1-R^{(\alpha)}_n(h)
\end{equation}
for the multiplier of the difference operator $I-T^{(\alpha)}_h$. Set
\begin{equation}\label{Bp}
B_{p,\alpha}:=\sup_{0\le h\le 1}\big\|T_{h}^{(\alpha)}\big\|_{\mathcal{L}^p_{\alpha}\to\mathcal{L}^p_{\alpha}}.
\end{equation}
By Lemma \ref{Tnorm}, $B_{p,\alpha}\le e^{1/2}$ for every $1\le p\le\infty$; at $p=2$ one has more precisely $B_{2,\alpha}=\sup_{0\le h\le1}\sup_n|R^{(\alpha)}_n(h)|$ by Remark \ref{notcontraction}(a). Norm estimates for $T^{(\alpha)}_h$ on a wider scale of weighted spaces are due to G\"orlich and Markett \cite{GM}; for generalized translations attached to singular Sturm--Liouville operators, of which the Laguerre and Hankel translations are the prototypes, see also Markett \cite{Mark84}.
Let $f\in\mathcal{L}^p_{\alpha}(\mathbb{R}^{+})$, $\alpha\ge0$, $1\le p\le\infty$, and let $r\in\mathbb{N}$. For $h\ge0$ put
\[
\Delta^{r}_{h}f:=\big(I-T^{(\alpha)}_{h}\big)^{r}f=\sum_{j=0}^{r}(-1)^{j}\binom{r}{j}\big(T^{(\alpha)}_{h}\big)^{j}f ,
\]
where $\big(T^{(\alpha)}_{h}\big)^{j}$ denotes the $j$-fold composition. For $r=1$ nothing is required beyond the definition of $T^{(\alpha)}_h$: $\Delta^{1}_{h}f=f-T^{(\alpha)}_{h}f$, and $\Delta^{1}_{h}f\in\mathcal{L}^p_{\alpha}$ precisely when $\|T^{(\alpha)}_{h}f-f\|_{\mathcal{L}^p_{\alpha}}<\infty$. For $r\ge2$ the compositions $\big(T^{(\alpha)}_{h}\big)^{j}$ are meaningful because $T^{(\alpha)}_{h}$ maps $\mathcal{L}^p_{\alpha}$ into itself (Lemma \ref{Tnorm}), and $\|\Delta^{r}_{h}f\|_{\mathcal{L}^p_{\alpha}}\le(1+B_{p,\alpha})^{r}\|f\|_{\mathcal{L}^p_{\alpha}}\le(1+e^{1/2})^{r}\|f\|_{\mathcal{L}^p_{\alpha}}$ for $0\le h\le1$. Iterating Lemma \ref{lemma23} gives the multiplier identity
\begin{equation}\label{diffmult}
L\big\{\Delta^{r}_{h}f\big\}(n)=\widehat{f}_{\alpha}(n)\,\big(\rho^{(\alpha)}_n(h)\big)^{r},\qquad n\in\mathbb{N}_0.
\end{equation}
\begin{definition}\label{moduli}
The \emph{modulus of smoothness of order $r$} of $f\in\mathcal{L}^p_{\alpha}(\mathbb{R}^{+})$ with respect to the Laguerre translation is
 \begin{equation}\label{Modulus}
 \Omega_r(L; f;t)_p := \sup_{0\leq h \leq t} \big\|\Delta^{r}_{h}f\big\|_{\mathcal{L}^p_{\alpha}},\qquad 0<t\le1,\qquad \Omega_r(L;f;0)_p:=0.
\end{equation}
For $r=1$ we recover the modulus of continuity, and we write $\Omega(L;f;t)_p$ for $\Omega_1(L;f;t)_p$.
\end{definition}
 \begin{property}\label{prop} Let $\alpha\ge0$, $1\le p\le\infty$ and $r\in\mathbb{N}$. The following properties of $\Omega_r(L;f;t)_p$ are valid:
\begin{itemize}
 \item[I)] $\Omega_r(L; f;t)_p$ is non-negative and $\Omega_r(L; f;0)_p=0$;
 \item[II)] $\Omega_r(L; f;t)_p$ is non-decreasing on $[0,1]$;
 \item[III)] if $p<\infty$, then $\Omega_r(L; f;t)_p\to0$ as $t\to0^+$, i.e. $\Omega_r(L;f;\cdot)_p$ is continuous at $t=0$.
\end{itemize}\end{property}
\begin{proof}
 Property I) follows from \eqref{Modulus}; II) follows from the fact that for larger $t$ the supremum is taken over a wider set of values of $h$.
Let us prove III). Fix $\varepsilon>0$. By Lemma \ref{dense} choose $P=\sum_{k=0}^{N}a_kL_k^{(\alpha)}$ with $\|f-P\|_{\mathcal{L}^p_{\alpha}}<\varepsilon$. For $0\le h\le 1$,
\[
\big\|\Delta^{r}_{h}f\big\|_{\mathcal{L}^p_{\alpha}}\le \big\|\Delta^{r}_{h}(f-P)\big\|_{\mathcal{L}^p_{\alpha}}+\big\|\Delta^{r}_{h}P\big\|_{\mathcal{L}^p_{\alpha}}
\le (1+B_{p,\alpha})^{r}\varepsilon+\big\|\Delta^{r}_{h}P\big\|_{\mathcal{L}^p_{\alpha}}.
\]
By Lemma \ref{lemma23} applied to $L^{(\alpha)}_k$, $T^{(\alpha)}_{h}L^{(\alpha)}_k=R^{(\alpha)}_k(h)L^{(\alpha)}_k$, whence $\Delta^{r}_{h}P=\sum_{k=0}^{N}a_k\big(\rho^{(\alpha)}_k(h)\big)^{r}L_k^{(\alpha)}$; each $\rho^{(\alpha)}_k$ is a polynomial with $\rho^{(\alpha)}_k(0)=0$, so the last term tends to $0$ as $h\to0^+$, uniformly for $0\le h\le t$ as $t\to 0^+$. Hence $\limsup_{t\to0^+}\Omega_r(L;f;t)_p\le(1+B_{p,\alpha})^{r}\varepsilon$, and $\varepsilon>0$ was arbitrary.
\end{proof}
\begin{remark}[The globally extended modulus]\label{nosubadd}
Although we use \eqref{Modulus} only for $0\le t\le1$, the same expression is finite for every fixed $t>0$ by Lemma \ref{Tnorm}. Its extension to all positive $t$ need not be subadditive. For $r=1$, $\alpha=0$, and $f={\rm L}_2^{(0)}$, Plancherel's identity and Lemma \ref{lemma23} give
\[
\Omega_1(L;f;t)_2=\sup_{0\le h\le t}|2h-h^2/2|.
\]
Consequently $\Omega_1(L;f;10)_2=30$, whereas $2\Omega_1(L;f;5)_2=5$. This is a statement about the global extension and makes no assertion about subadditivity when the two steps and their sum lie in $[0,1]$. All subsequent arguments use the properties already proved above and do not require subadditivity.
\end{remark}
\begin{remark}\label{whysmall}
By Remark \ref{notcontraction}(b), for $f=L_1^{(\alpha)}$ one has $\rho^{(\alpha)}_1(h)=\frac{h}{\alpha+1}$ and hence, extending \eqref{Modulus} to all $t>0$, $\Omega_r(L;f;t)_2=\big(\frac{t}{\alpha+1}\big)^{r}$. Consequently a condition $\Omega_r(L;f;t)_p=O(t^{\beta})$ with $\beta<r$ imposed for \emph{all} $t>0$ would force $\widehat f_\alpha(n)=0$ for every $n\ge1$, i.e.\ it would only be satisfied by the constants. This is why $t$ is restricted to $(0,1]$ in \eqref{Modulus} and in Definition \ref{lip} below.
\end{remark}
\section{Two-sided estimates for the multiplier}\label{sec3}
Throughout this section $\alpha\ge0$, $\rho^{(\alpha)}_n$ is as in \eqref{rhodef}, and
\[
\varepsilon_\alpha:=\frac{\alpha+2}{2\sqrt{e}},\qquad \eta_\alpha:=\min\{\varepsilon_\alpha,1\}.
\]
We collect here the estimates for $\rho^{(\alpha)}_n$ which replace the multiplier $\min\{1,(\lambda t)^2\}$ of Proposition \ref{pro1}; Figure \ref{fig:profile} shows the behaviour they describe.
\begin{lemma}\label{lemA}
Let $\alpha\ge0$, $n\ge1$ and $0\le h\le1$. Then
\begin{equation}\label{Ai}
\big|\rho^{(\alpha)}_n(h)\big|\le\frac{\sqrt{e}}{\alpha+1}\,nh,
\end{equation}
and
\begin{equation}\label{Aii}
\rho^{(\alpha)}_n(h)\ge\frac{nh}{2(\alpha+1)}\qquad\text{whenever } nh\le\varepsilon_\alpha.
\end{equation}
Moreover $\rho^{(\alpha)}_0\equiv0$ and $\big|\rho^{(\alpha)}_n(h)\big|\le 1+\sqrt{e}$ for all $n\ge0$, $0\le h\le1$.
\end{lemma}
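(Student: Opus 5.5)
The plan is to work directly with the hypergeometric form $R^{(\alpha)}_n(h)={}_1F_1(-n;\alpha+1;h)$ and the classical differentiation rule for Kummer's function, $\frac{d}{dz}M(a;b;z)=\frac ab M(a+1;b+1;z)$. With $a=-n$ and $b=\alpha+1$ this gives
\[
\frac{d}{dh}R^{(\alpha)}_n(h)=-\frac{n}{\alpha+1}\,{}_1F_1(-(n-1);\alpha+2;h)=-\frac{n}{\alpha+1}R^{(\alpha+1)}_{n-1}(h),
\qquad n\ge1.
\]
This can also be checked termwise from $R^{(\alpha)}_n(h)=\sum_{k=0}^n(-1)^k\binom nk\frac{h^k}{(\alpha+1)_k}$. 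Since $\rho^{(\alpha)}_n(0)=0$, we get the integral representation
\[
\rho^{(\alpha)}_n(h)=\frac{n}{\alpha+1}\int_0^hR^{(\alpha+1)}_{n-1}(s)\,ds .
\]
Both \eqref{Ai} and \eqref{Aii} will be read off from this formula. The only external input is the uniform bound \eqref{est2} applied with the parameter $\alpha+1\ge0$, which is admissible because \eqref{est2} holds for every nonnegative parameter.

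For \eqref{Ai}, I would use \eqref{est2} with parameter $\alpha+1$, which gives $|R^{(\alpha+1)}_{n-1}(s)|\le e^{s/2}\le\sqrt e$ for $0\le s\le1$. Inserting this into the integral yields $|\rho^{(\alpha)}_n(h)|\le\frac{\sqrt e}{\alpha+1}nh$ immediately.

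For \eqref{Aii}, I would bootstrap by applying the estimate just proved to the shifted parameters $(\alpha+1,n-1)$. This gives $|1-R^{(\alpha+1)}_{n-1}(s)|\le\frac{\sqrt e}{\alpha+2}(n-1)s$; the case $n=1$ is trivial since $R^{(\alpha+1)}_0\equiv1$. For $0\le s\le h$ with $nh\le\varepsilon_\alpha=\frac{\alpha+2}{2\sqrt e}$ we have $(n-1)s\le nh\le\frac{\alpha+2}{2\sqrt e}$. Hence $R^{(\alpha+1)}_{n-1}(s)\ge1-\frac12=\frac12$ on $[0,h]$, and integrating gives $\rho^{(\alpha)}_n(h)\ge\frac{nh}{2(\alpha+1)}$. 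Note that $h\le1$ is still needed here, because the bound \eqref{Ai} was proved only on $[0,1]$.

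The remaining assertions are immediate. $R^{(\alpha)}_0\equiv1$ gives $\rho^{(\alpha)}_0\equiv0$. For every $n\ge0$ and $0\le h\le1$, \eqref{est2} gives $|\rho^{(\alpha)}_n(h)|\le1+|R^{(\alpha)}_n(h)|\le1+e^{h/2}\le1+\sqrt e$.

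The only step needing genuine care is the derivative identity with the parameter shift $\alpha\mapsto\alpha+1$, $n\mapsto n-1$. I would verify it by differentiating the finite sum and reindexing $k\mapsto k+1$, using $k\binom nk=n\binom{n-1}{k-1}$ and $(\alpha+1)_{k+1}=(\alpha+1)(\alpha+2)_k$. Everything else is direct substitution into \eqref{est2}.
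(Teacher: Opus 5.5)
Your proposal is correct and follows the paper's proof essentially step for step: the Kummer derivative identity giving the integral representation of $\rho^{(\alpha)}_n$, then \eqref{est2} with parameter $\alpha+1$ for \eqref{Ai}, then the same bound applied with $(\alpha+1,n-1)$ to get $R^{(\alpha+1)}_{n-1}\ge\frac12$ on $[0,h]$ for \eqref{Aii}. The only cosmetic difference is in \eqref{Ai}: you bound $e^{s/2}\le\sqrt e$ before integrating, whereas the paper integrates exactly and uses $2(e^{h/2}-1)\le he^{h/2}$; both give the same constant.
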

\begin{proof}
From $\frac{d}{dz}{}_1F_1(a;b;z)=\frac{a}{b}\,{}_1F_1(a+1;b+1;z)$ we get
\begin{equation}\label{derivative}
\frac{d}{dh}R_{n}^{(\alpha)}(h)=-\frac{n}{\alpha+1}\,{}_1F_1(-(n-1);\alpha+2;h)=-\frac{n}{\alpha+1}\,R_{n-1}^{(\alpha+1)}(h),
\end{equation}
and therefore, since $R_{n}^{(\alpha)}(0)=1$,
\begin{equation}\label{integralform}
\rho^{(\alpha)}_n(h)=\frac{n}{\alpha+1}\int_{0}^{h}R_{n-1}^{(\alpha+1)}(s)\,ds.
\end{equation}
Applying \eqref{est2} with the order $\alpha+1\ (\ge0)$ gives $\big|R_{n-1}^{(\alpha+1)}(s)\big|\le e^{s/2}$, whence for $0\le h\le1$
\[
\big|\rho^{(\alpha)}_n(h)\big|\le\frac{n}{\alpha+1}\int_{0}^{h}e^{s/2}ds=\frac{2n\big(e^{h/2}-1\big)}{\alpha+1}\le\frac{n\,h\,e^{h/2}}{\alpha+1}\le\frac{\sqrt{e}}{\alpha+1}\,nh,
\]
where we used $e^{u}-1\le u e^{u}$ with $u=h/2$. This proves \eqref{Ai}.
Now assume $nh\le\varepsilon_\alpha$ and let $0\le s\le h\ (\le1)$. If $n=1$ then $R^{(\alpha+1)}_{0}\equiv1$, and if $n\ge2$ then, applying \eqref{Ai} with $\alpha+1$ and $n-1$ in place of $\alpha$ and $n$,
\[
\big|1-R_{n-1}^{(\alpha+1)}(s)\big|\le\frac{\sqrt{e}}{\alpha+2}(n-1)s\le\frac{\sqrt{e}}{\alpha+2}\,ns\le\frac{\sqrt{e}}{\alpha+2}\,\varepsilon_\alpha=\frac12 ,
\]
so that $R_{n-1}^{(\alpha+1)}(s)\ge\frac12$ on $[0,h]$. Together with \eqref{integralform} this yields \eqref{Aii}. The last two assertions are immediate from $R^{(\alpha)}_0\equiv1$ and \eqref{est2}.
\end{proof}
The next lemma is the only place where a non-elementary property of the Laguerre polynomials is used; it supplies the information in the range $nh\gtrsim1$, in which Lemma \ref{lemA} gives nothing.
\begin{lemma}\label{lemB}
Let $\alpha\ge0$. There is a constant $C_\alpha$ such that
\begin{equation}\label{Bdecay}
\big|R_{n}^{(\alpha)}(h)\big|\le C_\alpha\,(nh)^{-\frac{\alpha}{2}-\frac14},\qquad n\ge1,\quad \frac1n\le h\le1.
\end{equation}
Consequently there is $A_\alpha\ge1$, depending only on $\alpha$, such that
\begin{equation}\label{Blower}
\big|\rho_n^{(\alpha)}(h)\big|\ge\frac12,
\qquad n\ge1,\quad 0<h\le1,\quad nh\ge A_\alpha.
\end{equation}
\end{lemma}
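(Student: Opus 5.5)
The bound \eqref{Bdecay} is a uniform version of the classical Fejér/Erdélyi asymptotics of Laguerre polynomials in the oscillatory (Bessel) region. I would start from the Hilb-type formula of Szeg\H{o} \cite[Theorem~8.22.4]{Szego}:
\[
e^{-x/2}x^{\alpha/2}{\rm L}_n^{(\alpha)}(x)=N^{-\alpha/2}\frac{\Gamma(n+\alpha+1)}{n!}J_\alpha\big(2\sqrt{Nx}\big)+\mathcal{E}_n(x),\qquad N=n+\tfrac{\alpha+1}{2},
\]
with error $\mathcal{E}_n(x)=O(x^{5/4}n^{\alpha/2-3/4})$ uniformly for $c/n\le x\le \omega$. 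This is valid for $\alpha>-1$, in particular on our range $x=h\in[1/n,1]$.

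Dividing by ${\rm L}_n^{(\alpha)}(0)=\Gamma(n+\alpha+1)/(n!\,\Gamma(\alpha+1))\asymp n^{\alpha}$ and multiplying by $e^{h/2}h^{-\alpha/2}\le e^{1/2}h^{-\alpha/2}$ gives two contributions to $|R_n^{(\alpha)}(h)|$.

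\emph{Main term.} Using $|J_\alpha(u)|\le C u^{-1/2}$ for $u\ge 2$, it is bounded by
\[
C\,N^{-\alpha/2}h^{-\alpha/2}(Nh)^{-1/4}\asymp (nh)^{-\alpha/2-1/4}.
\]

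\emph{Error term.} It is bounded by
\[
C\,n^{-\alpha}h^{-\alpha/2}h^{5/4}n^{\alpha/2-3/4}=C\,(nh)^{-\alpha/2}\,h^{5/4}n^{-3/4}.
\]
Writing $h^{5/4}n^{-3/4}=(nh)^{-1/4}\,h^{3/2}n^{-1/2}$ and using $h\le1$, this is at most $C(nh)^{-\alpha/2-1/4}$.

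Together the two estimates give \eqref{Bdecay} for $n\ge n_0$. For the finitely many $n<n_0$, the quantity $nh$ ranges over a compact set $[1,n_0]$ and $|R_n^{(\alpha)}(h)|\le e^{1/2}$ by \eqref{est2}, so enlarging $C_\alpha$ covers them.

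The consequence \eqref{Blower} is immediate. Since $\alpha\ge0$, the exponent $\alpha/2+1/4$ is positive, so we can choose $A_\alpha\ge1$ with $C_\alpha A_\alpha^{-\alpha/2-1/4}\le 1/2$. Then for $nh\ge A_\alpha$ (which forces $h\ge1/n$) we get $|R_n^{(\alpha)}(h)|\le 1/2$, hence $|\rho_n^{(\alpha)}(h)|\ge 1-|R_n^{(\alpha)}(h)|\ge 1/2$.

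The main obstacle is making sure the cited asymptotic really is uniform down to $x\asymp 1/n$ with the stated error size. I would check this against Szeg\H{o}'s statement, which gives the error as $x^{5/4}O(n^{\alpha/2-3/4})$ for $c/n\le x\le\omega$, and as $x^{\alpha/2+2}O(n^\alpha)$ for $0<x\le c/n$ (the latter is not needed here). If the precise form were in doubt, the fallback is to use only the weaker, exponent-free statement $\sup_{nh\ge A}|R_n^{(\alpha)}(h)|\to0$ as $A\to\infty$, which is all that \eqref{Blower} requires. That statement can be derived from Mehler–Heine on compacta in $nh$ together with the Hilb formula for large $nh$.
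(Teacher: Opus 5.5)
Your proof is correct, and the passage from \eqref{Bdecay} to \eqref{Blower} is exactly the paper's. The difference lies in the classical input used for \eqref{Bdecay}.

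The paper quotes a ready-made uniform oscillatory bound (Askey--Wainger and Muckenhoupt, in the normalisation of Gasper and Trebels): $e^{-x/2}|R_n^{(\alpha)}(x)|\le C_\alpha(x\nu_n)^{-\alpha/2-1/4}$ for $\frac{1}{2\nu_n}\le x\le\frac{3\nu_n}{4}$, with $\nu_n=4n+2\alpha+2$. It then only checks that $[1/n,1]$ lies in this range and uses $\nu_n\ge4n$ and $e^{h/2}\le\sqrt e$. There is no splitting into main and error terms and no separate treatment of small $n$. The quoted bound in fact holds up to $x\asymp n$, far beyond what is needed.

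You instead derive the bound from Szeg\H{o}'s Hilb-type formula. It is only valid on ranges $c/n\le x\le\omega$, which suffices precisely because $h\le1$. Your bookkeeping is sound:
\begin{itemize}
\item the main term is $e^{h/2}\Gamma(\alpha+1)(Nh)^{-\alpha/2}J_\alpha(2\sqrt{Nh})$ with $2\sqrt{Nh}\ge2$;
\item the error term is handled by $h^{5/4}n^{-3/4}=(nh)^{-1/4}h^{3/2}n^{-1/2}$ with $h^{3/2}n^{-1/2}\le1$;
\item the finitely many $n<n_0$ are absorbed using $|R_n^{(\alpha)}(h)|\le e^{1/2}$ and $1\le nh\le n_0$.
\end{itemize}

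Your route costs a short computation, but it relies only on a standard textbook theorem. It also makes the exponent transparent: $(Nh)^{-\alpha/2}$ comes from the normalisation and $(Nh)^{-1/4}$ from the decay $|J_\alpha(u)|\le Cu^{-1/2}$. The uniformity you were worried about is part of Szeg\H{o}'s statement, which gives the error $x^{5/4}O(n^{\alpha/2-3/4})$ on the whole range $c/n\le x\le\omega$. So the Mehler--Heine fallback is not needed.
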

\begin{proof}
We use the classical oscillatory estimate for Laguerre polynomials; see Askey and Wainger \cite{AW} and Muckenhoupt \cite{Muck}. In the normalisation used here, Gasper and Trebels \cite[formula~(8)]{GTL} state
\begin{equation}\label{oscill}
\begin{gathered}
e^{-x/2}|R_n^{(\alpha)}(x)|\le C_\alpha(x\nu_n)^{-\alpha/2-1/4},\\
\frac1{2\nu_n}\le x\le\frac{3\nu_n}{4},\qquad \nu_n=4n+2\alpha+2.
\end{gathered}
\end{equation}
For $n\ge1$ and $1/n\le h\le1$, this range contains $h$, and $\nu_n\ge4n$. Multiplying \eqref{oscill} by $e^{h/2}\le\sqrt e$ and absorbing fixed factors into $C_\alpha$ proves \eqref{Bdecay}. We may enlarge $C_\alpha$ so that $C_\alpha\ge1$.
Finally choose $A_\alpha\ge1$ so large that $C_\alpha A_\alpha^{-\alpha/2-1/4}\le\frac12$. If $nh\ge A_\alpha$ and $0<h\le1$, then $h\ge\frac{A_\alpha}{n}\ge\frac1n$, so \eqref{Bdecay} applies and gives $\big|R_{n}^{(\alpha)}(h)\big|\le\frac12$, whence \eqref{Blower}.
\end{proof}
\begin{lemma}\label{lemC}
Let $\alpha\ge0$. There is a constant $c_\alpha>0$, depending only on $\alpha$, such that for every $q\ge1$, every $n\ge0$ and every $0<t\le1$,
\begin{equation}\label{Cmean}
\left(\frac1t\int_{0}^{t}\big|\rho^{(\alpha)}_n(h)\big|^{q}\,dh\right)^{1/q}\ \ge\ c_\alpha\,\min\{1,nt\}.
\end{equation}
In particular,
\begin{equation}\label{Csup}
c_\alpha\min\{1,nt\}\ \le\ \sup_{0\le h\le t}\big|\rho^{(\alpha)}_n(h)\big|\ \le\ c'_\alpha\min\{1,nt\},
\end{equation}
where $c'_\alpha:=1+\sqrt{e}$.
\end{lemma}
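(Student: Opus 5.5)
By Jensen's (or H\"older's) inequality the $q$-mean in \eqref{Cmean} is non-decreasing in $q$. It therefore suffices to treat $q=1$, that is, to bound $\frac1t\int_0^t|\rho^{(\alpha)}_n(h)|\,dh$ from below. The case $n=0$ is trivial since $\min\{1,0\}=0$, so assume $n\ge1$. I will split into two regimes according to the size of $nt$, using Lemma \ref{lemA} for small $nh$ and Lemma \ref{lemB} for large $nh$. The threshold will be $nt\le A_\alpha$ versus $nt>A_\alpha$, with $A_\alpha\ge1$ the constant from \eqref{Blower}.

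\emph{Large regime, $nt\ge 2A_\alpha$.} For $h\in[A_\alpha/n,t]$ we have $nh\ge A_\alpha$ and $0<h\le1$, so \eqref{Blower} gives $|\rho^{(\alpha)}_n(h)|\ge\frac12$. This interval has length $t-A_\alpha/n\ge t/2$, hence the mean is at least $\frac14=\frac14\min\{1,nt\}$.

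\emph{Small and intermediate regime, $nt<2A_\alpha$.} Here I use \eqref{Aii}: for $h\le \eta_\alpha/n$ (note $\eta_\alpha\le\varepsilon_\alpha$) we have $\rho^{(\alpha)}_n(h)\ge nh/(2(\alpha+1))$. Put $s:=\min\{t,\eta_\alpha/n\}$. Then
\[
\frac1t\int_0^t|\rho^{(\alpha)}_n(h)|\,dh\ \ge\ \frac1t\int_0^{s}\frac{nh}{2(\alpha+1)}\,dh=\frac{ns^2}{4(\alpha+1)t}.
\]
If $nt\le\eta_\alpha$, then $s=t$ and the bound is $\frac{nt}{4(\alpha+1)}\ge\frac{1}{4(\alpha+1)}\min\{1,nt\}$. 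If $\eta_\alpha<nt<2A_\alpha$, then $s=\eta_\alpha/n$ and the bound is $\frac{\eta_\alpha^2}{4(\alpha+1)\,nt}\ge\frac{\eta_\alpha^2}{8(\alpha+1)A_\alpha}$, which is a positive constant times $\min\{1,nt\}\le1$. Taking $c_\alpha$ as the minimum of the three constants $\frac14$, $\frac{\eta_\alpha}{4(\alpha+1)}$ (the factor $\eta_\alpha\le1$ is harmless) and $\frac{\eta_\alpha^2}{8(\alpha+1)A_\alpha}$ proves \eqref{Cmean}. In the intermediate range the positivity of $\rho$ near $0$ alone suffices, at the cost of the factor $1/A_\alpha$, so no information about $\rho$ on $[\eta_\alpha/n,A_\alpha/n]$ is needed. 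This is the step where one might otherwise expect difficulty, since $\rho$ can have sign changes or small values there.

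For \eqref{Csup}, the lower bound follows because the supremum dominates the mean with $q=1$. For the upper bound, the case $n=0$ is trivial. For $n\ge1$ and $0\le h\le t$, \eqref{Ai} gives $|\rho^{(\alpha)}_n(h)|\le\frac{\sqrt e}{\alpha+1}nt\le\sqrt e\,nt$, and Lemma \ref{lemA} gives $|\rho^{(\alpha)}_n(h)|\le1+\sqrt e$. Hence the supremum is at most $(1+\sqrt e)\min\{1,nt\}$.
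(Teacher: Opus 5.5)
Your proof is correct and follows the paper's argument. It uses the same three regimes $nt\le\eta_\alpha$, $\eta_\alpha<nt<2A_\alpha$, $nt\ge 2A_\alpha$, with \eqref{Aii} on an initial stretch of $[0,t]$ and \eqref{Blower} on $[A_\alpha/n,t]$. The differences are cosmetic: you reduce to $q=1$ at the outset by monotonicity of power means, where the paper handles each $q$ via $2^{-1/q}\ge\frac12$ and $x^{1/q}\ge x$, and you integrate the linear lower bound over $[0,\min\{t,\eta_\alpha/n\}]$ rather than over a dyadic subinterval. Also, the threshold announced in your first paragraph should read $2A_\alpha$, which is what you actually use.
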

\begin{proof}
For $n=0$ both sides of \eqref{Cmean} vanish, so let $n\ge1$. We distinguish three cases; recall $\eta_\alpha=\min\{\varepsilon_\alpha,1\}\le1$ and $A_\alpha\ge1$.
\emph{Case 1: $nt\le\eta_\alpha$.} For $h\in[\frac t2,t]$ we have $h\le1$ and $nh\le nt\le\varepsilon_\alpha$, so \eqref{Aii} gives $\rho^{(\alpha)}_n(h)\ge\frac{nh}{2(\alpha+1)}\ge\frac{nt}{4(\alpha+1)}$. Hence
\[
\frac1t\int_{0}^{t}\big|\rho^{(\alpha)}_n\big|^{q}\,dh\ge\frac12\left(\frac{nt}{4(\alpha+1)}\right)^{q},
\]
and, since $2^{-1/q}\ge\frac12$ for $q\ge1$, the left-hand side of \eqref{Cmean} is at least $\frac{nt}{8(\alpha+1)}=\frac{\min\{1,nt\}}{8(\alpha+1)}$.
\emph{Case 2: $\eta_\alpha<nt\le2A_\alpha$.} Let $J:=\big[\frac{\eta_\alpha}{2n},\frac{\eta_\alpha}{n}\big]$. Then $J\subset[0,t]$ because $\frac{\eta_\alpha}{n}<t$, and for $h\in J$ we have $h\le\frac{\eta_\alpha}{n}\le1$ and $nh\le\eta_\alpha\le\varepsilon_\alpha$, so \eqref{Aii} gives $\rho^{(\alpha)}_n(h)\ge\frac{nh}{2(\alpha+1)}\ge\frac{\eta_\alpha}{4(\alpha+1)}$. Since $\frac{|J|}{t}=\frac{\eta_\alpha}{2nt}\ge\frac{\eta_\alpha}{4A_\alpha}$,
\[
\frac1t\int_{0}^{t}\big|\rho^{(\alpha)}_n\big|^{q}dh\ \ge\ \frac{\eta_\alpha}{4A_\alpha}\left(\frac{\eta_\alpha}{4(\alpha+1)}\right)^{q},
\]
and, using $x^{1/q}\ge x$ for $0\le x\le1$ and $q\ge1$, the left-hand side of \eqref{Cmean} is at least $\frac{\eta_\alpha}{4A_\alpha}\cdot\frac{\eta_\alpha}{4(\alpha+1)}\ge\frac{\eta^2_\alpha}{16A_\alpha(\alpha+1)}\min\{1,nt\}$.
\emph{Case 3: $nt>2A_\alpha$.} The set $\{h\in[0,t]:nh\ge A_\alpha\}=[\frac{A_\alpha}{n},t]$ has measure $t-\frac{A_\alpha}{n}\ge\frac t2$, and on it \eqref{Blower} gives $\big|\rho^{(\alpha)}_n(h)\big|\ge\frac12$. Hence $\frac1t\int_{0}^{t}\big|\rho^{(\alpha)}_n\big|^{q}dh\ge\frac12\cdot2^{-q}$ and the left-hand side of \eqref{Cmean} is at least $\frac14=\frac14\min\{1,nt\}$.
Thus \eqref{Cmean} holds with $c_\alpha=\min\big\{\frac{1}{8(\alpha+1)},\frac{\eta^2_\alpha}{16A_\alpha(\alpha+1)},\frac14\big\}$. The lower bound in \eqref{Csup} follows from \eqref{Cmean} because the supremum dominates every mean; the upper bound follows from Lemma \ref{lemA}, since
$$
\sup_{h\le t}|\rho^{(\alpha)}_n(h)|\le\frac{\sqrt e}{\alpha+1}nt
$$
if $nt\le1$ and $\sup_{h\le t}|\rho^{(\alpha)}_n(h)|\le1+\sqrt e$ always.
\end{proof}
\begin{figure}[ht]
\begin{center}
\begin{tikzpicture}
\begin{axis}[width=0.86\textwidth,height=6cm,xlabel={$u=nh$},ylabel={$\big|\rho^{(\alpha)}_n(h)\big|$},
 xmin=0,xmax=20,ymin=0,ymax=1.35,legend pos=south east,
 legend style={font=\footnotesize,draw=none,fill=none},
 tick label style={font=\footnotesize},label style={font=\small}]
\addplot[thick,mark=none] coordinates {(0.001,0.0005) (0.169,0.0823) (0.337,0.1598) (0.505,0.2331) (0.673,0.3025) (0.841,0.3680) (1.009,0.4299) (1.177,0.4882) (1.345,0.5431) (1.514,0.5947) (1.682,0.6432) (1.850,0.6887) (2.018,0.7314) (2.186,0.7712) (2.354,0.8084) (2.522,0.8431) (2.690,0.8754) (2.858,0.9053) (3.026,0.9331) (3.194,0.9587) (3.362,0.9823) (3.530,1.0040) (3.698,1.0238) (3.866,1.0419) (4.034,1.0584) (4.202,1.0733) (4.371,1.0866) (4.539,1.0985) (4.707,1.1091) (4.875,1.1184) (5.043,1.1265) (5.211,1.1335) (5.379,1.1393) (5.547,1.1442) (5.715,1.1480) (5.883,1.1510) (6.051,1.1531) (6.219,1.1545) (6.387,1.1551) (6.555,1.1549) (6.723,1.1542) (6.891,1.1528) (7.059,1.1509) (7.228,1.1485) (7.396,1.1455) (7.564,1.1422) (7.732,1.1384) (7.900,1.1343) (8.068,1.1298) (8.236,1.1251) (8.404,1.1200) (8.572,1.1148) (8.740,1.1093) (8.908,1.1036) (9.076,1.0978) (9.244,1.0918) (9.412,1.0857) (9.580,1.0796) (9.748,1.0733) (9.916,1.0670) (10.085,1.0607) (10.253,1.0544) (10.421,1.0481) (10.589,1.0418) (10.757,1.0355) (10.925,1.0293) (11.093,1.0231) (11.261,1.0170) (11.429,1.0110) (11.597,1.0051) (11.765,0.9993) (11.933,0.9937) (12.101,0.9881) (12.269,0.9827) (12.437,0.9774) (12.605,0.9723) (12.773,0.9674) (12.942,0.9625) (13.110,0.9579) (13.278,0.9534) (13.446,0.9491) (13.614,0.9450) (13.782,0.9410) (13.950,0.9373) (14.118,0.9337) (14.286,0.9303) (14.454,0.9271) (14.622,0.9241) (14.790,0.9213) (14.958,0.9186) (15.126,0.9162) (15.294,0.9139) (15.462,0.9118) (15.630,0.9100) (15.799,0.9083) (15.967,0.9067) (16.135,0.9054) (16.303,0.9043) (16.471,0.9033) (16.639,0.9025) (16.807,0.9019) (16.975,0.9014) (17.143,0.9012) (17.311,0.9010) (17.479,0.9011) (17.647,0.9013) (17.815,0.9016) (17.983,0.9022) (18.151,0.9028) (18.319,0.9036) (18.487,0.9046) (18.656,0.9056) (18.824,0.9069) (18.992,0.9082) (19.160,0.9096) (19.328,0.9112) (19.496,0.9129) (19.664,0.9147) (19.832,0.9166) (20.000,0.9187)}; \addlegendentry{$n=20$}
\addplot[thick,dashdotted,mark=none] coordinates {(0.001,0.0005) (0.169,0.0822) (0.337,0.1595) (0.505,0.2324) (0.673,0.3013) (0.841,0.3662) (1.009,0.4274) (1.177,0.4849) (1.345,0.5390) (1.514,0.5898) (1.682,0.6374) (1.850,0.6820) (2.018,0.7237) (2.186,0.7627) (2.354,0.7990) (2.522,0.8328) (2.690,0.8642) (2.858,0.8933) (3.026,0.9202) (3.194,0.9450) (3.362,0.9679) (3.530,0.9889) (3.698,1.0081) (3.866,1.0257) (4.034,1.0416) (4.202,1.0560) (4.371,1.0689) (4.539,1.0805) (4.707,1.0908) (4.875,1.0999) (5.043,1.1079) (5.211,1.1147) (5.379,1.1206) (5.547,1.1254) (5.715,1.1294) (5.883,1.1326) (6.051,1.1349) (6.219,1.1366) (6.387,1.1375) (6.555,1.1378) (6.723,1.1375) (6.891,1.1367) (7.059,1.1353) (7.228,1.1335) (7.396,1.1312) (7.564,1.1286) (7.732,1.1256) (7.900,1.1222) (8.068,1.1186) (8.236,1.1147) (8.404,1.1106) (8.572,1.1062) (8.740,1.1017) (8.908,1.0970) (9.076,1.0922) (9.244,1.0872) (9.412,1.0822) (9.580,1.0770) (9.748,1.0719) (9.916,1.0667) (10.085,1.0614) (10.253,1.0562) (10.421,1.0509) (10.589,1.0457) (10.757,1.0405) (10.925,1.0354) (11.093,1.0303) (11.261,1.0253) (11.429,1.0203) (11.597,1.0154) (11.765,1.0107) (11.933,1.0060) (12.101,1.0014) (12.269,0.9970) (12.437,0.9926) (12.605,0.9884) (12.773,0.9843) (12.942,0.9804) (13.110,0.9766) (13.278,0.9729) (13.446,0.9693) (13.614,0.9660) (13.782,0.9627) (13.950,0.9596) (14.118,0.9566) (14.286,0.9538) (14.454,0.9512) (14.622,0.9487) (14.790,0.9463) (14.958,0.9441) (15.126,0.9421) (15.294,0.9402) (15.462,0.9384) (15.630,0.9368) (15.799,0.9353) (15.967,0.9340) (16.135,0.9328) (16.303,0.9317) (16.471,0.9308) (16.639,0.9300) (16.807,0.9294) (16.975,0.9289) (17.143,0.9285) (17.311,0.9282) (17.479,0.9281) (17.647,0.9280) (17.815,0.9281) (17.983,0.9283) (18.151,0.9286) (18.319,0.9290) (18.487,0.9296) (18.656,0.9302) (18.824,0.9309) (18.992,0.9317) (19.160,0.9325) (19.328,0.9335) (19.496,0.9346) (19.664,0.9357) (19.832,0.9369) (20.000,0.9382)}; \addlegendentry{$n=80$}
\addplot[thick,dashed,mark=none] coordinates {(0.001,0.0005) (0.169,0.0822) (0.337,0.1594) (0.505,0.2323) (0.673,0.3010) (0.841,0.3658) (1.009,0.4268) (1.177,0.4841) (1.345,0.5380) (1.514,0.5886) (1.682,0.6360) (1.850,0.6804) (2.018,0.7219) (2.186,0.7606) (2.354,0.7967) (2.522,0.8302) (2.690,0.8614) (2.858,0.8903) (3.026,0.9170) (3.194,0.9417) (3.362,0.9644) (3.530,0.9852) (3.698,1.0043) (3.866,1.0217) (4.034,1.0375) (4.202,1.0518) (4.371,1.0646) (4.539,1.0761) (4.707,1.0864) (4.875,1.0954) (5.043,1.1033) (5.211,1.1102) (5.379,1.1160) (5.547,1.1209) (5.715,1.1249) (5.883,1.1281) (6.051,1.1305) (6.219,1.1322) (6.387,1.1332) (6.555,1.1336) (6.723,1.1335) (6.891,1.1327) (7.059,1.1315) (7.228,1.1298) (7.396,1.1277) (7.564,1.1252) (7.732,1.1224) (7.900,1.1193) (8.068,1.1158) (8.236,1.1121) (8.404,1.1082) (8.572,1.1040) (8.740,1.0997) (8.908,1.0953) (9.076,1.0907) (9.244,1.0859) (9.412,1.0811) (9.580,1.0763) (9.748,1.0713) (9.916,1.0664) (10.085,1.0614) (10.253,1.0564) (10.421,1.0514) (10.589,1.0464) (10.757,1.0414) (10.925,1.0365) (11.093,1.0317) (11.261,1.0269) (11.429,1.0222) (11.597,1.0175) (11.765,1.0130) (11.933,1.0085) (12.101,1.0042) (12.269,0.9999) (12.437,0.9958) (12.605,0.9918) (12.773,0.9879) (12.942,0.9841) (13.110,0.9805) (13.278,0.9770) (13.446,0.9736) (13.614,0.9704) (13.782,0.9673) (13.950,0.9643) (14.118,0.9615) (14.286,0.9588) (14.454,0.9563) (14.622,0.9539) (14.790,0.9516) (14.958,0.9495) (15.126,0.9475) (15.294,0.9457) (15.462,0.9440) (15.630,0.9425) (15.799,0.9410) (15.967,0.9397) (16.135,0.9386) (16.303,0.9376) (16.471,0.9367) (16.639,0.9359) (16.807,0.9352) (16.975,0.9347) (17.143,0.9343) (17.311,0.9340) (17.479,0.9338) (17.647,0.9337) (17.815,0.9337) (17.983,0.9339) (18.151,0.9341) (18.319,0.9344) (18.487,0.9349) (18.656,0.9354) (18.824,0.9360) (18.992,0.9367) (19.160,0.9374) (19.328,0.9383) (19.496,0.9392) (19.664,0.9402) (19.832,0.9413) (20.000,0.9424)}; \addlegendentry{$n=320$}
\addplot[dotted,very thick,domain=0:20,samples=400] {min(1,x)}; \addlegendentry{$\min\{1,u\}$}
\end{axis}
\end{tikzpicture}
\end{center}
\caption{The multiplier $\big|\rho^{(\alpha)}_n(h)\big|=\big|1-R^{(\alpha)}_n(h)\big|$ for $\alpha=1$, plotted against $u=nh$ for three values of $n$; the displayed range is $0<u\le20$, corresponding to $0<h\le20/n\le1$ for each of the three curves. The three curves nearly coincide, so that $\big|\rho^{(\alpha)}_n(h)\big|$ depends on $n$ and $h$ essentially through the product $nh$ alone: it grows linearly for $nh$ small and then oscillates about a height comparable to $1$. This is the qualitative content of \eqref{Ai}, \eqref{Aii} and \eqref{Blower}, and the reason the weight $\min\{1,nt\}$ appears throughout Section \ref{sec4}.}\label{fig:profile}
\end{figure}
\begin{remark}\label{uniformq}
The constant $c_\alpha$ in \eqref{Cmean} is independent of $q$. This gives constants with a simple dependence on the order in Section \ref{sec4}: by \eqref{diffmult} the multiplier of $\Delta^{r}_{h}$ is $\big(\rho^{(\alpha)}_n(h)\big)^{r}$, and \eqref{Cmean} applied with $q=rs$ gives, for every $s\ge1$ and $r\in\mathbb{N}$,
\begin{equation}\label{Crmean}
\frac1t\int_{0}^{t}\big|\rho^{(\alpha)}_n(h)\big|^{rs}\,dh\ \ge\ \big(c_\alpha\min\{1,nt\}\big)^{rs},\qquad 0<t\le1.
\end{equation}
Likewise, raising \eqref{Csup} to the power $r$,
\begin{equation}\label{Crsup}
\big(c_\alpha\min\{1,nt\}\big)^{r}\ \le\ \sup_{0\le h\le t}\big|\rho^{(\alpha)}_n(h)\big|^{r}\ \le\ \big(c'_\alpha\min\{1,nt\}\big)^{r},
\end{equation}
where $c'_\alpha=1+\sqrt e$.
\end{remark}
\section{Growth of the Laguerre transform}\label{sec4}
\begin{theorem}\label{estimates}
 Let $\alpha\geq 0$, $1\leq p\leq 2$, $r\in\mathbb{N}$, $0<t\le1$, and let $f\in\mathcal{L}^p_{\alpha}(\mathbb{R}^{+})$. Then, with $c_\alpha$ as in Lemma \ref{lemC} and $c_{\alpha,p}$ as in Theorem \ref{hyl},
 \begin{equation}\label{main1}
 \left(\sum_{n=0}^{\infty}(\lambda_{n}^{\alpha})^{\frac{2-p'}{2}}\min\left\{1, nt\right\}^{rp'}\big|\widehat{f}_{\alpha}(n)\big|^{p'}\right)^{1/p'}\leq \frac{c_{\alpha,p}}{c_\alpha^{\,r}}\ \Omega_r(L; f;t)_p,
 \end{equation}
 for $1<p\leq 2$, and
 \begin{equation}\label{main2}
 \sup_{n\ge0} \left\{(\lambda_{n}^{\alpha})^{-\frac{1}{2}}\min\left\{1, nt\right\}^{r}\big|\widehat{f}_{\alpha}(n)\big|\right\}\leq \frac{1}{c_\alpha^{\,r}\,\Gamma(\alpha+1)}\ \Omega_r(L; f;t)_1,
 \end{equation}
 for $p=1$.
\end{theorem}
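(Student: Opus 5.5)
The plan follows the Fourier model of Proposition \ref{pro1}. We apply the Hausdorff--Young inequality to each difference $\Delta^r_h f$ with $0\le h\le t$, and then average over $h\in[0,t]$. The averaged lower bound \eqref{Crmean} of Remark \ref{uniformq} converts the multiplier $|\rho^{(\alpha)}_n(h)|^{r}$ into the weight $\min\{1,nt\}^r$.

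\emph{Case $1<p\le2$.} Fix $h\in[0,t]$. By Lemma \ref{Tnorm}, $\Delta^r_h f\in\mathcal L^p_\alpha$. By \eqref{diffmult} its coefficients are $\widehat f_\alpha(n)(\rho^{(\alpha)}_n(h))^r$. Theorem \ref{hyl} then gives
\[
\sum_{n\ge0}(\lambda_n^\alpha)^{\frac{2-p'}{2}}|\rho^{(\alpha)}_n(h)|^{rp'}|\widehat f_\alpha(n)|^{p'}\le c_{\alpha,p}^{p'}\|\Delta^r_hf\|^{p'}_{\mathcal L^p_\alpha}\le c_{\alpha,p}^{p'}\,\Omega_r(L;f;t)_p^{p'}.
\]
Next we integrate in $h$ over $[0,t]$ and divide by $t$; the right-hand side is unchanged. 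Every term on the left is nonnegative, so Tonelli's theorem lets us exchange sum and integral. Measurability of $h\mapsto\rho^{(\alpha)}_n(h)$ is clear, since it is a polynomial. We then apply \eqref{Crmean} with $s=p'\ge1$:
\[
\frac1t\int_0^t|\rho^{(\alpha)}_n(h)|^{rp'}dh\ge (c_\alpha\min\{1,nt\})^{rp'}.
\]
This yields
\[
c_\alpha^{rp'}\sum_n(\lambda_n^\alpha)^{\frac{2-p'}{2}}\min\{1,nt\}^{rp'}|\widehat f_\alpha(n)|^{p'}\le c_{\alpha,p}^{p'}\Omega_r(L;f;t)_p^{p'}.
\]
Taking $p'$-th roots gives \eqref{main1} with exactly the stated constant $c_{\alpha,p}/c_\alpha^r$. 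Jensen's inequality is not needed here, because \eqref{Crmean} already averages the $rp'$-th power.

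\emph{Case $p=1$.} Fix $n$. For each $h\in[0,t]$, Lemma \ref{lem1} applied to $\Delta^r_hf$ gives
\[
(\lambda_n^\alpha)^{-1/2}|\rho^{(\alpha)}_n(h)|^r|\widehat f_\alpha(n)|\le\Gamma(\alpha+1)^{-1}\Omega_r(L;f;t)_1.
\]
Taking the supremum over $h\in[0,t]$ and using the lower bound in \eqref{Crsup}, namely $\sup_{h\le t}|\rho^{(\alpha)}_n(h)|^r\ge(c_\alpha\min\{1,nt\})^r$, we get
\[
(\lambda_n^\alpha)^{-1/2}\min\{1,nt\}^r|\widehat f_\alpha(n)|\le\frac{1}{c_\alpha^r\,\Gamma(\alpha+1)}\,\Omega_r(L;f;t)_1.
\]
Taking the supremum over $n$ gives \eqref{main2}. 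The term $n=0$ is trivial, since $\min\{1,0\}=0$.

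All the analytic difficulty sits in Lemma \ref{lemC}, which is already proved. It combines the small-$nh$ linear lower bound with the oscillatory decay of Lemma \ref{lemB}. What remains is routine bookkeeping: justifying the interchange (Tonelli), checking that the coefficients of $\Delta^r_hf$ are given by \eqref{diffmult} for $f\in\mathcal L^p_\alpha$ (Lemma \ref{lemma23} iterated), and noting that the constant in \eqref{Crmean} does not depend on the exponent, so that it passes cleanly through the $p'$-th root.
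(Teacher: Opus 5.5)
Your proposal is correct and follows the paper's proof essentially step by step: Theorem \ref{hyl} (resp.\ Lemma \ref{lem1}) is applied to $\Delta^{r}_{h}f$; for $1<p\le2$ one averages over $h\in[0,t]$ using Tonelli and \eqref{Crmean} with $s=p'$, and for $p=1$ one takes the supremum in $h$ and uses the lower bound in \eqref{Crsup}. The only difference is cosmetic: you bound by $\Omega_r(L;f;t)_p^{p'}$ before averaging, so you need measurability only of the polynomial multipliers $h\mapsto\rho^{(\alpha)}_n(h)$, whereas the paper averages $\|\Delta^{r}_{h}f\|^{p'}_{\mathcal{L}^p_{\alpha}}$ and simply asserts that this function of $h$ is measurable.
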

\begin{proof}
We may assume $\Omega_r(L;f;t)_p<\infty$; then $g_h:=\Delta^{r}_{h}f\in\mathcal{L}^p_{\alpha}$ for every $h\in[0,t]$, by Lemma \ref{Tnorm}. \begin{equation}\label{diffcoef}
\widehat{(g_h)}_\alpha(n)=\widehat{f}_{\alpha}(n)\,\big(\rho^{(\alpha)}_n(h)\big)^{r},\qquad n\ge0.
\end{equation}
\emph{The case $p=1$.} Fix $n\ge0$. By Lemma \ref{lem1} applied to $g_h$ and by \eqref{diffcoef}, for every $0\le h\le t$,
\[
(\lambda_{n}^{\alpha})^{-1/2}\big|\widehat{f}_{\alpha}(n)\big|\,\big|\rho^{(\alpha)}_n(h)\big|^{r}\le\frac{1}{\Gamma(\alpha+1)}\|g_h\|_{\mathcal{L}^1_{\alpha}}\le\frac{1}{\Gamma(\alpha+1)}\Omega_r(L;f;t)_1.
\]
Taking the supremum over $h\in[0,t]$ on the left and using the lower bound in \eqref{Crsup},
\[
c_\alpha^{\,r}\min\{1,nt\}^{r}\,(\lambda_{n}^{\alpha})^{-1/2}\big|\widehat{f}_{\alpha}(n)\big|\le\frac{1}{\Gamma(\alpha+1)}\Omega_r(L;f;t)_1.
\]
Since $n$ was arbitrary, \eqref{main2} follows. (Note that here the two suprema commute, which is why no averaging is needed.)
\emph{The case $1<p\le2$.} The function $h\mapsto\|g_h\|_{\mathcal{L}^p_{\alpha}}$ is measurable on $[0,t]$, and it is bounded above by $\Omega_r(L;f;t)_p$. Hence, replacing the supremum by the mean and applying Theorem \ref{hyl} to $g_h$,
\[
\Omega_r(L;f;t)_p^{p'}\ \ge\ \frac1t\int_{0}^{t}\|g_h\|^{p'}_{\mathcal{L}^p_{\alpha}}\,dh\ \ge\ c_{\alpha,p}^{-p'}\,\frac1t\int_{0}^{t}\big\|\widehat{(g_h)}_\alpha\big\|^{p'}_{{l}^{p'}_{\alpha}}\,dh.
\]
By \eqref{diffcoef} and Tonelli's theorem the last integral equals
\begin{align*}
\frac1t\int_{0}^{t}\sum_{n=0}^{\infty}&(\lambda_{n}^{\alpha})^{\frac{2-p'}{2}}\big|\rho^{(\alpha)}_n(h)\big|^{rp'}\big|\widehat{f}_{\alpha}(n)\big|^{p'}dh\\
&=\sum_{n=0}^{\infty}(\lambda_{n}^{\alpha})^{\frac{2-p'}{2}}\big|\widehat{f}_{\alpha}(n)\big|^{p'}\left(\frac1t\int_{0}^{t}\big|\rho^{(\alpha)}_n(h)\big|^{rp'}dh\right).
\end{align*}
Since $p'\ge2\ge1$, \eqref{Crmean} with $s=p'$ bounds the inner mean from below by $\big(c_\alpha\min\{1,nt\}\big)^{rp'}$, and we obtain
\[
\Omega_r(L;f;t)_p^{p'}\ \ge\ \left(\frac{c_\alpha^{\,r}}{c_{\alpha,p}}\right)^{p'}\sum_{n=0}^{\infty}(\lambda_{n}^{\alpha})^{\frac{2-p'}{2}}\min\{1,nt\}^{rp'}\big|\widehat{f}_{\alpha}(n)\big|^{p'}.
\]
Raising both sides to the power $1/p'$ gives \eqref{main1}.
\end{proof}
\begin{remark}
At $p=p'=2$ the weight $(\lambda^{\alpha}_n)^{\frac{2-p'}{2}}$ in \eqref{main1} equals $1$. Any weight growing in $n$ would contradict \eqref{pe}, since $\lambda^{\alpha}_n\asymp n^{\alpha}$; Proposition \ref{sharp} below shows that the factor $\min\{1,nt\}^{r}$ is optimal as well.
\end{remark}
In the complementary range $p\ge2$ the same weighted sum bounds the modulus from \emph{above}.
\begin{theorem}\label{estimatesdual}
 Let $\alpha\geq 0$, $2\le p\le\infty$, $r\in\mathbb{N}$, $0<t\le1$, and let $f\in\mathcal{L}^p_{\alpha}(\mathbb{R}^{+})$. Then, with $c^{*}_{\alpha,p}$ as in Theorem \ref{hyldual} and $c'_\alpha=1+\sqrt e$,
 \begin{equation}\label{main3}
 \Omega_r(L; f;t)_p\ \leq\ c^{*}_{\alpha,p}\,(c'_\alpha)^{r}\left(\sum_{n=0}^{\infty}(\lambda_{n}^{\alpha})^{\frac{2-p'}{2}}\min\left\{1, nt\right\}^{rp'}\big|\widehat{f}_{\alpha}(n)\big|^{p'}\right)^{1/p'}.
 \end{equation}
\end{theorem}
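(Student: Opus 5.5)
The plan is to apply the dual Hausdorff--Young inequality, Theorem \ref{hyldual}(ii), to each difference $g_h:=\Delta^r_h f$ with $0\le h\le t$. Then we bound its coefficients pointwise by the upper estimate in \eqref{Crsup}, and finally take the supremum over $h$. No averaging is needed here, since the inequality runs in the easy direction: for every fixed $h$ we bound the multiplier from above by its supremum over $[0,t]$.

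First, we may assume that the right-hand side of \eqref{main3} is finite. Fix $h\in[0,t]$. By Lemma \ref{Tnorm} and the discussion preceding Definition \ref{moduli}, $g_h\in\mathcal L^p_\alpha$, because $f\in\mathcal L^p_\alpha$ and $h\le1$. By \eqref{diffmult}, $\widehat{(g_h)}_\alpha(n)=\widehat f_\alpha(n)\big(\rho^{(\alpha)}_n(h)\big)^r$. Since $g_h$ itself belongs to $\mathcal L^p_\alpha$, the hypothesis of Theorem \ref{hyldual}(ii) holds (cf.\ Remark \ref{needf}). This gives
\[
\|g_h\|_{\mathcal L^p_\alpha}\le c^*_{\alpha,p}\Big(\sum_{n\ge0}(\lambda_n^\alpha)^{\frac{2-p'}{2}}|\rho^{(\alpha)}_n(h)|^{rp'}|\widehat f_\alpha(n)|^{p'}\Big)^{1/p'}.
\]
For $p=\infty$ we have $p'=1$, and the sum is the $l^1_\alpha$ norm, so the same inequality is just Lemma \ref{lem1dual}.

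Next, the upper bound in \eqref{Crsup} gives $|\rho^{(\alpha)}_n(h)|^r\le\sup_{0\le s\le t}|\rho^{(\alpha)}_n(s)|^r\le (c'_\alpha)^r\min\{1,nt\}^r$ for every $n$ and every $h\in[0,t]$; for $n=0$ both sides vanish. Inserting this into the sum and pulling out the factor $(c'_\alpha)^{r}$ yields, for each $h\in[0,t]$, the right-hand side of \eqref{main3} as a bound for $\|\Delta^r_hf\|_{\mathcal L^p_\alpha}$. Since this bound does not depend on $h$, taking the supremum over $0\le h\le t$ gives \eqref{main3} by Definition \ref{moduli}.

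The only delicate point is justifying the application of Theorem \ref{hyldual}(ii) to $g_h$, that is, identifying $g_h$ with the series built from its coefficients. This is exactly why part (ii) of that theorem, which uses the density Lemma \ref{dense} with $1\le p'<\infty$, is needed, and why we must know $g_h\in\mathcal L^p_\alpha$ a priori. Lemma \ref{Tnorm} supplies this for all $1\le p\le\infty$ and $h\le1$, including $p=\infty$. With these two facts in hand, the remaining steps are direct.
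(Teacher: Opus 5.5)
Your proposal is correct and follows the paper's proof: apply Theorem \ref{hyldual}(ii) to $\Delta^r_hf\in\mathcal L^p_\alpha$, bound each coefficient multiplier by the upper estimate in \eqref{Crsup}, and take the supremum over $h\in[0,t]$ of a bound that does not depend on $h$. One small precision: your aside that the case $p=\infty$ is ``just Lemma \ref{lem1dual}'' also needs the uniqueness step of Theorem \ref{hyldual}(ii), which identifies $\Delta^r_hf$ with the series built from its coefficients, as you yourself note in your final paragraph.
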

\begin{proof}
Assume that the right-hand side of \eqref{main3} is finite and fix $h\in[0,t]$. By \eqref{diffmult}, the coefficients of $\Delta_h^r f$ are $\widehat f_\alpha(n)(\rho_n^{(\alpha)}(h))^r$. Applying the upper multiplier bound \eqref{Crsup} to each coefficient gives
\[
\big\|\widehat{\big(\Delta^{r}_{h}f\big)}_{\alpha}\big\|_{{l}^{p'}_{\alpha}}
\ \le\ (c'_\alpha)^{r}\left(\sum_{n=0}^{\infty}(\lambda_{n}^{\alpha})^{\frac{2-p'}{2}}\min\{1,nt\}^{rp'}\big|\widehat{f}_{\alpha}(n)\big|^{p'}\right)^{1/p'},
\]
and Theorem \ref{hyldual}\,{\rm(ii)} applied to $\Delta^{r}_{h}f\in\mathcal{L}^p_{\alpha}$ bounds $\|\Delta^{r}_{h}f\|_{\mathcal{L}^p_{\alpha}}$ by $c^{*}_{\alpha,p}$ times the left-hand side. The bound is independent of $h$, so taking the supremum over $h\in[0,t]$ gives \eqref{main3}.
\end{proof}
\begin{remark}\label{whyp2}
Theorems \ref{estimates} and \ref{estimatesdual} are mirror images of one another: one and the same weighted sum bounds $\Omega_r(L;f;t)_p$ from below when $1\le p\le2$, and from above when $2\le p\le\infty$. The exponent $p=2$ is the only one at which both are available, and there they combine into the equivalence of Theorem \ref{equiv} below.
The lower estimate cannot be reversed for $1\le p<2$, even when the order $r$ is fixed. Suppose such a reversal held with a constant independent of $f,t$. Take $t=1$ and $f=L_m^{(\alpha)}$, $m\ge1$. By \eqref{Crsup},
\[
\Omega_r(L;L_m^{(\alpha)};1)_p
=\sup_{0\le h\le1}|\rho_m^{(\alpha)}(h)|^r\|L_m^{(\alpha)}\|_{\mathcal L_\alpha^p}
\ge c_\alpha^r\|L_m^{(\alpha)}\|_{\mathcal L_\alpha^p}.
\]
The weighted coefficient expression for this single polynomial is
\[
(\lambda_m^\alpha)^{1/p'-1/2}
\asymp m^{-\alpha(1/p-1/2)},
\]
with $p'=\infty$ and $1/p'=0$ when $p=1$. The Cohen-type estimate of Markett \cite[formula~(1.13)]{Mark}, stated in Gasper and Trebels \cite[Remark~1]{GT2}, gives
\[
\left\|{\rm L}_m^{(\alpha)}\right\|_{\mathcal L_\alpha^p}
\ge c\,(m+1)^{(\alpha+1)/p-1/2},\qquad 1\le p<2.
\]
After multiplication by $(\lambda_m^\alpha)^{-1/2}$, this yields
\[
\|L_m^{(\alpha)}\|_{\mathcal L_\alpha^p}
\ge c\,m^{(\alpha+1)(1/p-1/2)}.
\]
The ratio of the proposed left- and right-hand sides is therefore bounded below by a positive multiple of
$m^{(2\alpha+1)(1/p-1/2)}$, which tends to infinity. For the broader multiplier framework, see also Part~I \cite{GT1}.
\end{remark}
For $p=2$ the inequality of Theorem \ref{estimates} can be reversed. Put
\begin{equation}\label{Sdef}
\mathcal{S}_r(f;t):=\left(\sum_{n=0}^{\infty}\min\{1,nt\}^{2r}\big|\widehat{f}_{\alpha}(n)\big|^{2}\right)^{1/2},\qquad 0<t\le1.
\end{equation}
\begin{theorem}\label{equiv}
Let $\alpha\ge0$, $r\in\mathbb{N}$, $f\in\mathcal{L}^2_{\alpha}(\mathbb{R}^{+})$ and $0<t\le1$. Then
\begin{equation}\label{equivineq}
c_\alpha^{\,r}\,\mathcal{S}_r(f;t)\ \le\ \Omega_r(L;f;t)_2\ \le\ (c'_\alpha)^{r}\,\mathcal{S}_r(f;t),\qquad c'_\alpha=1+\sqrt e.
\end{equation}
In particular $\Omega_r(L;f;t)_2\asymp\mathcal{S}_r(f;t)$, with constants depending only on $\alpha$ and $r$.
\end{theorem}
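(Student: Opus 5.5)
The two inequalities in \eqref{equivineq} are the case $p=2$ of Theorems \ref{estimates} and \ref{estimatesdual}, with the constants made explicit. At $p=p'=2$ the weight $(\lambda_n^\alpha)^{(2-p')/2}$ equals $1$, so the weighted sums in \eqref{main1} and \eqref{main3} both reduce to $\mathcal S_r(f;t)$. The constants are also trivial here: $c_{\alpha,2}=\Gamma(\alpha+1)^0=1$ and $c^*_{\alpha,2}=1$. Hence \eqref{main1} gives $\mathcal S_r(f;t)\le c_\alpha^{-r}\Omega_r(L;f;t)_2$, which is the left inequality. Likewise \eqref{main3} gives $\Omega_r(L;f;t)_2\le(c'_\alpha)^r\mathcal S_r(f;t)$, which is the right inequality.

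For a self-contained version, I would argue directly from Plancherel \eqref{pe} and the multiplier identity \eqref{diffmult}. These give, for each $h\in[0,t]$,
\[
\|\Delta_h^rf\|_{\mathcal L^2_\alpha}^2=\sum_{n\ge0}|\rho_n^{(\alpha)}(h)|^{2r}|\widehat f_\alpha(n)|^2 .
\]

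\emph{Upper bound.} Bound each factor $|\rho_n^{(\alpha)}(h)|^{r}$ by $(c'_\alpha\min\{1,nt\})^r$ using \eqref{Crsup} together with $h\le t$. Then take the supremum over $h$.

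\emph{Lower bound.} Replace the supremum over $h$ by the mean over $[0,t]$, and interchange sum and integral by Tonelli. Then apply \eqref{Crmean} with $s=2$ to each $n$. This gives $\Omega_r(L;f;t)_2^2\ge c_\alpha^{2r}\mathcal S_r(f;t)^2$.

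The only point that needs care is the measurability of $h\mapsto\|\Delta_h^rf\|^2$ in the averaging step. This holds because, by the displayed formula, it is a series of nonnegative continuous (polynomial) functions of $h$.

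The real obstacle, the averaged lower bound for the multiplier, has already been handled in Lemma \ref{lemC}. Note that the pointwise lower bound in \eqref{Csup} would not suffice for the lower estimate, since the supremum would then be taken separately for each $n$. What makes the argument work is that the mean bound in \eqref{Crmean} is uniform in $n$.
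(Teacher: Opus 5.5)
Your proposal is correct and follows the paper's proof: the paper argues directly from Plancherel and \eqref{diffmult}, bounds the multiplier pointwise by \eqref{Crsup} for the upper estimate (remarking that this is the case $p=2$ of Theorem \ref{estimatesdual}), and averages over $h\in[0,t]$ with Tonelli and \eqref{Crmean} at $s=2$ for the lower estimate. Your reduction to Theorems \ref{estimates} and \ref{estimatesdual} with $c_{\alpha,2}=c^{*}_{\alpha,2}=1$ is also valid, and your justification of the measurability of $h\mapsto\|\Delta_h^rf\|^2$ fills in a point the paper leaves implicit.
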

\begin{proof}
By the Plancherel identity \eqref{pe} and \eqref{diffmult}, for every $h\ge0$,
\begin{equation}\label{pch2}
\big\|\Delta^{r}_{h}f\big\|^{2}_{\mathcal{L}^2_{\alpha}}=\sum_{n=0}^{\infty}\big|\rho^{(\alpha)}_n(h)\big|^{2r}\big|\widehat{f}_{\alpha}(n)\big|^{2}.
\end{equation}
For the upper bound, the right-hand side of \eqref{Crsup} gives $\big|\rho^{(\alpha)}_n(h)\big|^{2r}\le(c'_\alpha)^{2r}\min\{1,nt\}^{2r}$ for every $h\in[0,t]$ and every $n$, and the bound is uniform in $h$; taking the supremum over $h\in[0,t]$ in \eqref{pch2} therefore yields $\Omega_r(L;f;t)_2^{2}\le(c'_\alpha)^{2r}\mathcal{S}_r(f;t)^2$. (This is also the case $p=2$ of Theorem \ref{estimatesdual}, for which $c^{*}_{\alpha,2}=1$.)
For the lower bound, replace the supremum by the mean and use Tonelli's theorem and \eqref{Crmean} with $s=2$:
\begin{align*}
\Omega_r(L;f;t)_2^{2}\ &\ge\ \frac1t\int_{0}^{t}\big\|\Delta^{r}_{h}f\big\|^{2}_{\mathcal{L}^2_{\alpha}}dh
=\sum_{n=0}^{\infty}\big|\widehat{f}_{\alpha}(n)\big|^{2}\,\frac1t\int_{0}^{t}\big|\rho^{(\alpha)}_n(h)\big|^{2r}dh\\
&\ge\ c_\alpha^{\,2r}\,\mathcal{S}_r(f;t)^2. \qedhere
\end{align*}
\end{proof}
\begin{corollary}\label{tailform}
Let $\alpha\ge0$, $r\in\mathbb{N}$, $f\in\mathcal{L}^2_{\alpha}(\mathbb{R}^{+})$ and $m\in\mathbb{N}$. Then
\begin{equation}\label{tail}
\mathcal{S}_r\big(f;\tfrac1m\big)^{2}=\sum_{n\ge m}\big|\widehat{f}_{\alpha}(n)\big|^{2}+\frac{1}{m^{2r}}\sum_{n<m}n^{2r}\big|\widehat{f}_{\alpha}(n)\big|^{2},
\end{equation}
and consequently, writing $\mathcal{T}_{r,m}(f)$ for the right-hand side of \eqref{tail},
\[
c_\alpha^{\,2r}\,\mathcal{T}_{r,m}(f)\ \le\ \Omega_r\big(L;f;\tfrac1m\big)_2^{2}\ \le\ (c'_\alpha)^{2r}\,\mathcal{T}_{r,m}(f).
\]
\end{corollary}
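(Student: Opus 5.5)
The identity \eqref{tail} is a direct evaluation of the weight in \eqref{Sdef} at $t=1/m$, and the two-sided bound then follows by substituting it into Theorem \ref{equiv}.

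First I compute the weight. For $t=1/m$ with $m\in\mathbb{N}$, we have $\min\{1,n/m\}=1$ when $n\ge m$ and $=n/m$ when $0\le n<m$. The term $n=0$ contributes $0$ in both expressions. Here I use the convention $0^{2r}=0$ on the right, consistent with $\min\{1,0\}^{2r}=0$ on the left. Therefore
\[
\min\{1,n/m\}^{2r}=\mathbf{1}_{\{n\ge m\}}+m^{-2r}n^{2r}\mathbf{1}_{\{n<m\}}.
\]

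Next I split the series. Inserting this weight into \eqref{Sdef} and separating the sum over $n\ge m$ from the finite sum over $n<m$ gives \eqref{tail}. Every term is nonnegative, so rearranging the series is harmless. The series converges because $f\in\mathcal{L}^2_\alpha$ and \eqref{pe} holds.

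Finally I apply Theorem \ref{equiv} with $t=1/m\in(0,1]$, which is admissible because $m\ge1$. Squaring \eqref{equivineq} gives
\[
c_\alpha^{2r}\mathcal{S}_r(f;1/m)^2\le\Omega_r(L;f;1/m)_2^2\le(c'_\alpha)^{2r}\mathcal{S}_r(f;1/m)^2.
\]
Replacing $\mathcal{S}_r(f;1/m)^2$ by $\mathcal{T}_{r,m}(f)$ via \eqref{tail} yields the claim.

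None of these steps is a real obstacle. The only points needing care are the handling of $n=0$ and the requirement $t\le1$, which is why $m\ge1$ is needed.
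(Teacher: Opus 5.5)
Your proposal is correct and follows the paper's proof exactly: evaluate $\min\{1,n/m\}$ separately for $n<m$ and $n\ge m$ to obtain \eqref{tail}, then apply Theorem \ref{equiv} with $t=1/m$. One small remark: $0^{2r}=0$ is not a convention here, since $r\ge1$ makes the exponent positive.
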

\begin{proof}
Identity \eqref{tail} is immediate from \eqref{Sdef}, since $\min\{1,\frac nm\}=\frac nm$ for $n<m$ and $\min\{1,\frac nm\}=1$ for $n\ge m$. The two-sided bound is then Theorem \ref{equiv} with $t=1/m$.
\end{proof}
The next statement shows that the weight $\min\{1,nt\}^{r}$ is optimal up to constant factors in the $L^2$ lower estimate.
\begin{proposition}\label{sharp}
Let $\alpha\ge0$ and $r\in\mathbb{N}$, and let $\phi:\mathbb{N}_0\times(0,1]\to[0,\infty)$ be such that, for some $K<\infty$,
\[
\left(\sum_{n=0}^{\infty}\phi(n,t)^{2}\big|\widehat{f}_{\alpha}(n)\big|^{2}\right)^{1/2}\ \le\ K\,\Omega_r(L;f;t)_2
.
\]
Assume this holds for all $f\in\mathcal L_\alpha^2$ and $0<t\le1$. Then
\[
\phi(n,t)\ \le\ K\,(c'_\alpha)^{r}\min\{1,nt\}^{r}\qquad\text{for all }n\in\mathbb{N}_0,\ 0<t\le1.
\]
Individual Laguerre polynomials witness this optimal order: for $f=L^{(\alpha)}_m$ one has
\[
c_\alpha^{\,r}\min\{1,mt\}^{r}\ \le\ \Omega_r\big(L;L^{(\alpha)}_m;t\big)_2\ \le\ (c'_\alpha)^{r}\min\{1,mt\}^{r}.
\]
\end{proposition}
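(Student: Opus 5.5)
The plan is to test the assumed inequality on single normalised Laguerre polynomials. Everything then reduces to the two-sided multiplier bound \eqref{Crsup} through Plancherel.

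\emph{Witness estimate.} Fix $m\in\mathbb N_0$ and put $f=L_m^{(\alpha)}$. By \eqref{ort}, $\widehat f_\alpha(n)=\delta_{nm}$. Lemma \ref{lemma23} gives $T_h^{(\alpha)}L_m^{(\alpha)}=R_m^{(\alpha)}(h)L_m^{(\alpha)}$, so \eqref{diffmult} together with \eqref{pch2} yields
\[
\|\Delta_h^rL_m^{(\alpha)}\|_{\mathcal L_\alpha^2}=|\rho_m^{(\alpha)}(h)|^r .
\]
Taking the supremum over $0\le h\le t$ gives
\[
\Omega_r(L;L_m^{(\alpha)};t)_2=\sup_{0\le h\le t}|\rho_m^{(\alpha)}(h)|^r,
\]
and \eqref{Crsup} gives both displayed bounds at once. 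This is also the case $f=L_m^{(\alpha)}$ of Theorem \ref{equiv}, since $\mathcal S_r(L_m^{(\alpha)};t)=\min\{1,mt\}^r$. For $m=0$ both sides vanish, because $\rho_0^{(\alpha)}\equiv0$.

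\emph{Optimality.} Fix $n\in\mathbb N_0$ and $t\in(0,1]$, and insert $f=L_n^{(\alpha)}$ into the hypothesis. The left-hand side collapses to the single term $\phi(n,t)$, since $\widehat f_\alpha(k)=\delta_{kn}$. The right-hand side is $K\,\Omega_r(L;L_n^{(\alpha)};t)_2$, which is at most $K(c'_\alpha)^r\min\{1,nt\}^r$ by the witness estimate just proved. This gives the claimed bound for every $n$ and $t$. In particular $\phi(0,t)=0$.

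There is no real obstacle here. The only point that needs care is the identity
\[
\Omega_r(L;L_m^{(\alpha)};t)_2=\sup_{0\le h\le t}|\rho_m^{(\alpha)}(h)|^r .
\]
It requires that $L_m^{(\alpha)}$ is an exact eigenfunction of each $T_h^{(\alpha)}$, so that the $r$-fold composition acts as multiplication by $(\rho_m^{(\alpha)}(h))^r$, together with the normalisation $\|L_m^{(\alpha)}\|_{\mathcal L_\alpha^2}=1$. Both facts are already available from Lemma \ref{lemma23} and \eqref{ort}. The substantive input is the lower bound in \eqref{Csup}, which the paper has already proved in Lemma \ref{lemC} using Lemma \ref{lemB}, and the optimality direction does not even need it.
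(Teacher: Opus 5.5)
Your proposal is correct and follows essentially the same route as the paper: test the hypothesis on $f=L_n^{(\alpha)}$, so that the left-hand side collapses to $\phi(n,t)$, and bound the right-hand side using the two-sided estimate for single Laguerre polynomials. The only difference is that you derive that estimate directly from the eigenfunction identity and \eqref{Crsup}, whereas the paper quotes Theorem \ref{equiv} with $\mathcal S_r(L_n^{(\alpha)};t)=\min\{1,nt\}^r$, which is the same computation.
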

\begin{proof}
Fix $n$ and $t$ and test the hypothesis on $f=L^{(\alpha)}_n$, for which $\widehat{f}_{\alpha}(k)=\delta_{nk}$ by \eqref{ort}. The left-hand side equals $\phi(n,t)$, while by Theorem \ref{equiv} the right-hand side is at most
\[
K(c'_\alpha)^{r}\mathcal{S}_r(f;t)=K(c'_\alpha)^{r}\min\{1,nt\}^{r}.
\]
The last display is Theorem \ref{equiv} for the same $f$.
\end{proof}
\section{Best approximation and Lipschitz classes}\label{sec5}
We now derive approximation consequences of the weighted $L^2$ estimate. These belong to the Laguerre approximation theory developed by G\"orlich \cite{GM2}, Abilov \cite{Abi,Abi96}, and Abilov, Abilova and Abilov \cite{AAA}. As explained in the introduction, arbitrary-order moduli and the Lipschitz equivalence of Theorem \ref{tm2} already occur in that theory. The proofs below show directly how they follow from Theorem \ref{equiv}, and distinguish the general-$p$ coefficient criteria from best approximation.
Let $\Pi_m:=\operatorname{span}\big\{L^{(\alpha)}_0,\dots,L^{(\alpha)}_{m-1}\big\}$ be the space of Laguerre polynomials of degree less than $m$, and let
\[
E_m(f)_2:=\inf_{P\in\Pi_m}\|f-P\|_{\mathcal{L}^2_{\alpha}},\qquad m\in\mathbb{N},
\]
denote the best approximation of $f\in\mathcal{L}^2_{\alpha}$ by such polynomials. Since $\{L^{(\alpha)}_n\}_{n\ge0}$ is an orthonormal basis of $\mathcal{L}^2_{\alpha}$, the infimum is attained at the partial sum $\sum_{n<m}\widehat f_\alpha(n)L^{(\alpha)}_n$ and
\begin{equation}\label{Em}
E_m(f)_2^{2}=\sum_{n\ge m}\big|\widehat{f}_{\alpha}(n)\big|^{2}.
\end{equation}
Thus Corollary \ref{tailform} may be rewritten as a \emph{realisation} result: for all $m\in\mathbb{N}$,
\begin{equation}\label{realisation}
\Omega_r\big(L;f;\tfrac1m\big)_2\ \asymp\ \left(E_m(f)_2^{2}+\frac{1}{m^{2r}}\sum_{n<m}n^{2r}\big|\widehat{f}_{\alpha}(n)\big|^{2}\right)^{1/2},
\end{equation}
the two terms being the tail of the expansion and a Bernstein-type quantity for its polynomial part.
\begin{theorem}[Jackson and Bernstein inequalities]\label{jb}
Let $\alpha\ge0$, $r\in\mathbb{N}$ and $f\in\mathcal{L}^2_{\alpha}(\mathbb{R}^{+})$.
\begin{itemize}
\item[\rm(i)] \emph{(Jackson)} For every $m\in\mathbb{N}$,
\[
E_m(f)_2\ \le\ c_\alpha^{-r}\,\Omega_r\big(L;f;\tfrac1m\big)_2.
\]
\item[\rm(ii)] \emph{(Bernstein)} For every $P\in\Pi_m$ and $0<t\le1$,
\[
\Omega_r(L;P;t)_2\ \le\ \big(c'_\alpha\,mt\big)^{r}\,\|P\|_{\mathcal{L}^2_{\alpha}}.
\]
\end{itemize}
\end{theorem}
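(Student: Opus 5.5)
Both parts follow directly from Theorem \ref{equiv}, or from its tail form in Corollary \ref{tailform}, together with the formula \eqref{Em} for the best approximation.

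For the Jackson inequality (i), fix $m\in\mathbb N$ and take $t=1/m\in(0,1]$. For $n\ge m$ we have $\min\{1,n/m\}=1$. Dropping the nonnegative terms with $n<m$ in \eqref{Sdef} therefore gives
\[
\mathcal S_r(f;\tfrac1m)^2\ \ge\ \sum_{n\ge m}|\widehat f_\alpha(n)|^2=E_m(f)_2^2 ,
\]
by \eqref{Em}. The lower bound in \eqref{equivineq} then yields $c_\alpha^{\,r}E_m(f)_2\le c_\alpha^{\,r}\mathcal S_r(f;\tfrac1m)\le\Omega_r(L;f;\tfrac1m)_2$. This is (i) after dividing by $c_\alpha^{\,r}$. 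Equivalently, one may read it off from the first summand of $\mathcal T_{r,m}(f)$ in Corollary \ref{tailform}.

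For the Bernstein inequality (ii), let $P=\sum_{n<m}a_nL_n^{(\alpha)}\in\Pi_m$. Then $\widehat P_\alpha(n)=a_n$ for $n<m$ and $\widehat P_\alpha(n)=0$ for $n\ge m$, by \eqref{ort}. Hence only indices $n\le m-1$ enter $\mathcal S_r(P;t)$, and for these $\min\{1,nt\}\le nt\le mt$. This gives
\[
\mathcal S_r(P;t)^2\le (mt)^{2r}\sum_{n<m}|a_n|^2=(mt)^{2r}\|P\|_{\mathcal L_\alpha^2}^2
\]
by Plancherel \eqref{pe}. The upper bound in \eqref{equivineq} then gives $\Omega_r(L;P;t)_2\le (c'_\alpha)^r(mt)^r\|P\|_{\mathcal L^2_\alpha}$, which is (ii).

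There is no genuine obstacle. The substantive work, namely the averaged lower multiplier bound of Lemma \ref{lemC} and the uniform upper bound \eqref{Crsup}, is already packaged in Theorem \ref{equiv}. The only points to check are that $t=1/m$ lies in the admissible range $(0,1]$, which holds since $m\ge1$, and that the bound $\min\{1,nt\}\le mt$ is used only for $n<m$. Note that when $mt>1$ the Bernstein bound is weaker than the trivial one, but it remains valid, so no case distinction is needed.
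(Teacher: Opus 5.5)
Your proposal is correct and follows the paper's proof: for (i) you drop the indices $n<m$ in $\mathcal S_r(f;\tfrac1m)$ to get $E_m(f)_2\le\mathcal S_r(f;\tfrac1m)$ and apply the lower bound in \eqref{equivineq}. For (ii) you use $\min\{1,nt\}\le nt\le mt$ on the surviving indices $n<m$ and apply the upper bound in \eqref{equivineq}, exactly as the paper does.
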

\begin{proof}
(i) By \eqref{Em} and $\min\{1,\frac nm\}=1$ for $n\ge m$ we have $E_m(f)_2\le\mathcal{S}_r(f;\frac1m)$, and the left inequality in \eqref{equivineq} with $t=1/m$ gives the claim.
(ii) If $P\in\Pi_m$ then $\widehat P_\alpha(n)=0$ for $n\ge m$, so in \eqref{Sdef} only the indices $n<m$ contribute, where $\min\{1,nt\}\le nt\le mt$. Hence $\mathcal{S}_r(P;t)\le(mt)^{r}\|P\|_{\mathcal{L}^2_{\alpha}}$, and the right inequality in \eqref{equivineq} gives the claim.
\end{proof}
Let us now define the Lipschitz classes on \(\mathcal{L}^p_{\alpha}(\mathbb{R}^+)\).
\begin{definition}\label{lip}
 Let $\beta>0$, $r\in\mathbb{N}$ and $1\le p\le\infty$. The Lipschitz space \(\text{Lip}(\beta,r;\mathcal{L}^p_{\alpha})\) consists of all \(f\in\mathcal{L}^p_{\alpha}(\mathbb{R}^+)\) such that
\[\Omega_r(L; f;t)_p\leq M t^{\beta},\qquad 0<t\leq1,\]
for some constant $0<M<\infty$. It is normed by
\[
\|f\|_{\text{Lip}(\beta,r;\mathcal{L}^p_{\alpha})} := \|f\|_{\mathcal{L}^p_{\alpha}} + \sup_{0<t\leq1} \left\{t^{-\beta}\Omega_r(L; f;t)_p\right\}.
\]
Here $\Omega_r(L;f;t)_p$ is the modulus of smoothness \eqref{Modulus} built from the translation operator \eqref{tr-op}. The restriction $t\le1$ is essential, see Remark \ref{whysmall}.
\end{definition}
The following known approximation--smoothness equivalence is recovered here from the weighted coefficient estimate; compare \cite{AAA} and the specialisation described in the introduction. Negzaoui \cite{Neg} studies a related Titchmarsh theorem on the Laguerre hypergroup, whose transform and translation differ from the one-variable polynomial expansion used here.
\begin{theorem}\label{tm2}
 Let $\alpha\ge0$, $r\in\mathbb{N}$, $0<\beta<r$ and $f\in\mathcal{L}^2_{\alpha}(\mathbb{R}^+)$. Then the following are equivalent:
\begin{itemize}
\item[\rm(i)] $f\in \text{Lip}(\beta,r;\mathcal{L}^2_{\alpha})$;
\item[\rm(ii)] $\displaystyle\sum_{n\geq m}\big|\widehat{f}_{\alpha}(n)\big|^{2}=O\big(m^{-2\beta}\big)$ as $m\to\infty$; equivalently, $E_m(f)_2=O(m^{-\beta})$.
\end{itemize}
\end{theorem}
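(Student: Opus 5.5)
The plan is to derive both implications from the realisation identity of Corollary \ref{tailform}, i.e.\ from
\[
\Omega_r\big(L;f;\tfrac1m\big)_2^2\asymp \mathcal T_{r,m}(f)=\sum_{n\ge m}|\widehat f_\alpha(n)|^2+m^{-2r}\sum_{n<m}n^{2r}|\widehat f_\alpha(n)|^2 .
\]
Passing from $t=1/m$ to general $t\in(0,1]$ is easy. Set $m=\lceil 1/t\rceil$. Then $\frac{1}{m}\le t<\frac{2}{m}$, hence $\min\{1,nt\}\le 2\min\{1,n/m\}$. It follows that $\mathcal S_r(f;t)\le 2^r\mathcal S_r(f;1/m)$. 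Theorem \ref{equiv} then reduces everything to the sequence $\mathcal T_{r,m}(f)$, $m\in\mathbb N$.

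\emph{(i)$\Rightarrow$(ii).} This follows immediately from the Jackson inequality, Theorem \ref{jb}(i):
\[
E_m(f)_2\le c_\alpha^{-r}\,\Omega_r(L;f;1/m)_2\le c_\alpha^{-r}M m^{-\beta}.
\]
By \eqref{Em} this is the tail condition. Note that this direction does not use $\beta<r$.

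\emph{(ii)$\Rightarrow$(i).} Write $\tau_k:=\sum_{n\ge k}|\widehat f_\alpha(n)|^2$ and assume $\tau_k\le Ak^{-2\beta}$ for all $k\ge1$. The constant $A$ can be taken uniform in $k$, since $\tau_k\le\|f\|^2$ handles the small $k$. The first term of $\mathcal T_{r,m}$ is $\tau_m\le Am^{-2\beta}$. The main obstacle is the Bernstein-type term $m^{-2r}\sum_{n<m}n^{2r}|\widehat f_\alpha(n)|^2$, and this is where $\beta<r$ enters.

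For the Bernstein term I would use a dyadic decomposition. For $2^j\le m$, the block $2^{j-1}\le n<2^j$ contributes at most $2^{2rj}\tau_{2^{j-1}}\le A\,2^{2\beta}\,2^{2(r-\beta)j}$. The $n=0$ term vanishes. Summing over $j\le \log_2 m+1$ gives a geometric series with ratio $2^{2(r-\beta)}>1$, so the sum is $\lesssim_{\beta,r} A\,m^{2(r-\beta)}$ precisely because $r-\beta>0$. Dividing by $m^{2r}$ yields $\lesssim A m^{-2\beta}$. Alternatively, Abel summation would give the same bound, writing $n^{2r}|\widehat f_\alpha(n)|^2=n^{2r}(\tau_n-\tau_{n+1})$ and using $(n+1)^{2r}-n^{2r}\lesssim n^{2r-1}$.

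Hence $\mathcal T_{r,m}(f)\lesssim Am^{-2\beta}$. Corollary \ref{tailform} then gives $\Omega_r(L;f;1/m)_2\lesssim m^{-\beta}$, and the comparison at general $t$ described above gives $\Omega_r(L;f;t)_2\lesssim t^{\beta}$ for $0<t\le1$, that is, $f\in\mathrm{Lip}(\beta,r;\mathcal L^2_\alpha)$. The equivalence with $E_m(f)_2=O(m^{-\beta})$ is just \eqref{Em}.
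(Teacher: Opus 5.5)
Your proposal is correct and follows essentially the same route as the paper: (i)$\Rightarrow$(ii) comes from the Jackson inequality of Theorem~\ref{jb}(i), and (ii)$\Rightarrow$(i) from bounding the Bernstein term of $\mathcal{T}_{r,m}(f)$ in Corollary~\ref{tailform}, where $\beta<r$ makes the relevant sum of order $m^{2(r-\beta)}$. The differences are only technical: your dyadic blocks replace the paper's Abel summation (which you also mention), and your choice $m=\lceil 1/t\rceil$ with the comparison $\mathcal{S}_r(f;t)\le 2^r\mathcal{S}_r(f;1/m)$ replaces the paper's $m=\lfloor 1/t\rfloor$ plus monotonicity of $\Omega_r$, which spares you the paper's separate treatment of $\tfrac12<t\le 1$.
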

\begin{proof}
$(\mathrm{i})\Rightarrow(\mathrm{ii})$. This is Theorem \ref{jb}(i): $E_m(f)_2\le c_\alpha^{-r}\Omega_r(L;f;\frac1m)_2\le c_\alpha^{-r}Mm^{-\beta}$.
$(\mathrm{ii})\Rightarrow(\mathrm{i})$. Put $\gamma_m:=\sum_{n\ge m}|\widehat{f}_{\alpha}(n)|^{2}$. By hypothesis $\gamma_m=O(m^{-2\beta})$, and since $\gamma_m\le\gamma_1\le\|f\|^2_{\mathcal{L}^2_{\alpha}}$ for every $m$, after enlarging the constant we may assume $\gamma_m\le c\,m^{-2\beta}$ for \emph{all} $m\ge1$; and $|\widehat{f}_{\alpha}(n)|^{2}=\gamma_n-\gamma_{n+1}$. By Corollary \ref{tailform} it suffices to estimate the second term of $\mathcal{T}_{r,m}(f)$. Abel summation and the elementary inequality $n^{2r}-(n-1)^{2r}\le 2r\,n^{2r-1}$ give
\begin{align*}
\sum_{n<m}n^{2r}\big|\widehat{f}_{\alpha}(n)\big|^{2}
&=\sum_{n=1}^{m-1}n^{2r}\big(\gamma_n-\gamma_{n+1}\big)
\ \le\ \gamma_1+\sum_{n=2}^{m-1}\big[n^{2r}-(n-1)^{2r}\big]\gamma_n\\
&\le\ \gamma_1+2r\sum_{n=2}^{m-1}n^{2r-1}\gamma_n
\ \le\ \gamma_1+2rc\sum_{n=2}^{m-1}n^{2r-1-2\beta}.
\end{align*}
Since $\beta<r$ we have $2r-1-2\beta>-1$, whence $\sum_{n=2}^{m-1}n^{2r-1-2\beta}\le C_{r,\beta}\,m^{2r-2\beta}$; as also $2r-2\beta>0$ and $\gamma_1\le\|f\|^2_{\mathcal{L}^2_{\alpha}}$, we conclude
\[
\frac{1}{m^{2r}}\sum_{n<m}n^{2r}\big|\widehat{f}_{\alpha}(n)\big|^{2}\ \le\ C_1\,m^{-2\beta}.
\]
Together with $\gamma_m\le c\,m^{-2\beta}$ this gives $\mathcal{T}_{r,m}(f)\le C_2m^{-2\beta}$ and hence, by Corollary \ref{tailform},
\begin{equation}\label{Ombound}
\Omega_r\big(L;f;\tfrac1m\big)_2^{2}\ \le\ (c'_\alpha)^{2r}C_2\,\frac{1}{m^{2\beta}},\qquad m\ge1.
\end{equation}
Finally, let $0<t\le1$. If $t\le\frac12$, put $m:=\lfloor 1/t\rfloor\ge2$; then $t\le\frac1m$ and $m\ge\frac{1}{2t}$, so Property \ref{prop}\,II) and \eqref{Ombound} give $\Omega_r(L;f;t)_2^{2}\le(c'_\alpha)^{2r}C_2(2t)^{2\beta}$. If $\frac12<t\le1$, then by Property \ref{prop}\,II), \eqref{equivineq} and $\mathcal{S}_r(f;1)\le\|f\|_{\mathcal{L}^2_{\alpha}}$,
\[
\Omega_r(L;f;t)_2\le\Omega_r(L;f;1)_2\le(c'_\alpha)^{r}\|f\|_{\mathcal{L}^2_{\alpha}}\le(c'_\alpha)^{r}\|f\|_{\mathcal{L}^2_{\alpha}}(2t)^{\beta}.
\]
In both cases $\Omega_r(L;f;t)_2\le Mt^{\beta}$ with $M$ independent of $t$, i.e.\ $f\in\text{Lip}(\beta,r;\mathcal{L}^2_{\alpha})$.
\end{proof}
\begin{remark}[Saturation]\label{sat}
The modulus $\Omega_r$ saturates at the order $t^{r}$. Indeed, by Theorem \ref{equiv}, for every $f\in\mathcal{L}^2_{\alpha}$, every $n\ge1$ and every $0<t\le\frac1n$,
\[
\Omega_r(L;f;t)_2\ \ge\ c_\alpha^{\,r}\min\{1,nt\}^{r}\big|\widehat{f}_{\alpha}(n)\big|=c_\alpha^{\,r}(nt)^{r}\big|\widehat{f}_{\alpha}(n)\big|.
\]
Hence $\Omega_r(L;f;t)_2=o(t^{r})$ as $t\to0^{+}$ forces $\widehat{f}_{\alpha}(n)=0$ for every $n\ge1$, that is, $f$ is a multiple of $L^{(\alpha)}_0$. In particular $\text{Lip}(\beta,r;\mathcal{L}^2_{\alpha})$ consists of the constants alone when $\beta>r$. For $r=1$ Theorem \ref{tm2} recovers the classical range $0<\beta<1$.
\end{remark}
\begin{remark}[The restriction $\beta<r$ is sharp]\label{betaeqr}
For $f\in\mathcal L_\alpha^2$, the endpoint has the exact coefficient characterisation
\begin{equation}\label{endpointclass}
\Omega_r(L;f;t)_2=O(t^r)\quad(t\downarrow0)
\quad\Longleftrightarrow\quad
\sum_{n\ge1}n^{2r}|\widehat f_\alpha(n)|^2<\infty.
\end{equation}
Indeed, the lower bound in Theorem \ref{equiv} bounds the partial sums with $n\le1/t$ after division by $t^{2r}$; letting $t\downarrow0$ proves necessity. Conversely, $\min\{1,nt\}\le nt$ and the upper bound in that theorem prove sufficiency.

The approximation-rate condition in Theorem \ref{tm2} alone does not characterise this endpoint class. Let $\widehat{f}_{\alpha}(0):=0$ and $\widehat{f}_{\alpha}(n):=n^{-r-\frac12}$ for $n\ge1$; then $f\in\mathcal{L}^2_{\alpha}$ and, by \eqref{Em},
\[
E_m(f)_2^{2}=\sum_{n\ge m}n^{-2r-1}\asymp m^{-2r},
\]
so that condition {\rm(ii)} of Theorem \ref{tm2} holds with $\beta=r$. On the other hand, splitting \eqref{Sdef} at $n=1/t$,
\begin{align*}
\mathcal S_r(f;t)^2
&=t^{2r}\sum_{1\le n\le1/t}n^{-1}+\sum_{n>1/t}n^{-2r-1}\\
&\asymp t^{2r}\log\tfrac1t+t^{2r}
\asymp t^{2r}\log\tfrac1t,\qquad t\downarrow0.
\end{align*}
so Theorem \ref{equiv} gives $\Omega_r(L;f;t)_2\asymp t^{r}\big(\log\frac1t\big)^{1/2}$, which is not $O(t^{r})$. Thus {\rm(ii)} does not imply {\rm(i)} when $\beta=r$. Note also that $\text{Lip}(r,r;\mathcal{L}^2_{\alpha})$ is strictly larger than the constants --- replacing the coefficients above by $n^{-r-\frac12}(\log n)^{-1}$, $n\ge2$, makes the last sum convergent and yields a non-constant $f$ with $\Omega_r(L;f;t)_2\asymp t^{r}$ --- so the endpoint class is non-trivial but is not characterised by the decay of $E_m(f)_2$ alone.
\end{remark}
\begin{example}\label{ex}
Fix $r\in\mathbb{N}$ and $0<\beta<r$, and let $f\in\mathcal{L}^2_{\alpha}(\mathbb{R}^+)$ be defined by its Laguerre coefficients
\[
\widehat{f}_{\alpha}(0):=0,\qquad \widehat{f}_{\alpha}(n):=n^{-\beta-\frac12}\ \ (n\ge1),
\]
which lies in $\mathcal{L}^2_{\alpha}$ by \eqref{pe} because $\sum_{n\ge1}n^{-2\beta-1}<\infty$. Splitting \eqref{Sdef} at $n=1/t$ and using $2r-2\beta-1>-1$,
\[
\mathcal{S}_r(f;t)^{2}=t^{2r}\!\!\sum_{1\le n\le 1/t}\!\! n^{2r-2\beta-1}+\sum_{n>1/t}n^{-2\beta-1}\ \asymp\ t^{2r}\,t^{-(2r-2\beta)}+t^{2\beta}\ \asymp\ t^{2\beta},
\]
so Theorem \ref{equiv} gives $\Omega_r(L;f;t)_2\asymp t^{\beta}$ for $0<t\le1$. Hence $f\in\text{Lip}(\beta,r;\mathcal{L}^2_{\alpha})$, while $f\notin\text{Lip}(\beta',r;\mathcal{L}^2_{\alpha})$ for \emph{every} $\beta'>\beta$. Consequently the classes $\text{Lip}(\beta,r;\mathcal{L}^2_{\alpha})$, $0<\beta\le r$, are pairwise distinct, and the exponent in Theorem \ref{tm2} is exact.
\end{example}
We close by extending Theorem \ref{tm2} to the remaining exponents. For $p\ne2$ the role of the best approximation $E_m(f)_2$ is played by the tail of the Laguerre coefficients measured in ${l}^{p'}_{\alpha}$:
\begin{equation}\label{Emp}
\mathcal{E}_m(f)_p:=\left(\sum_{n\ge m}(\lambda^{\alpha}_n)^{\frac{2-p'}{2}}\big|\widehat{f}_{\alpha}(n)\big|^{p'}\right)^{1/p'},\qquad m\in\mathbb{N},
\end{equation}
with the usual modification $\mathcal{E}_m(f)_1:=\sup_{n\ge m}(\lambda^{\alpha}_n)^{-1/2}\big|\widehat{f}_{\alpha}(n)\big|$ for $p=1$. By \eqref{Em} we have $\mathcal{E}_m(f)_2=E_m(f)_2$. We also write $E_m(f)_p:=\inf_{P\in\Pi_m}\|f-P\|_{\mathcal{L}^p_{\alpha}}$, consistently with the case $p=2$ above.
\begin{proposition}\label{jbp}
Let $\alpha\ge0$, $2\le p\le\infty$ and $f\in\mathcal{L}^p_{\alpha}(\mathbb{R}^+)$. Then
\[
E_m(f)_p\ \le\ c^{*}_{\alpha,p}\,\mathcal{E}_m(f)_p,\qquad m\in\mathbb{N}.
\]
\end{proposition}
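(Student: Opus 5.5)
The plan is to exhibit an explicit near-best polynomial, namely the Laguerre partial sum $S_m f:=\sum_{n<m}\widehat f_\alpha(n)L^{(\alpha)}_n\in\Pi_m$. We then estimate $\|f-S_mf\|_{\mathcal L^p_\alpha}$ by the dual Hausdorff--Young inequality, Theorem \ref{hyldual}(ii). We may assume $\mathcal E_m(f)_p<\infty$, since otherwise there is nothing to prove. Then $E_m(f)_p\le\|f-S_mf\|_{\mathcal L^p_\alpha}$, and the task reduces to showing $\|f-S_mf\|_{\mathcal L^p_\alpha}\le c^*_{\alpha,p}\mathcal E_m(f)_p$.

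First I will verify that $g:=f-S_mf$ lies in $\mathcal L^p_\alpha$. Each $L^{(\alpha)}_n$ is a polynomial, and $e^{-x/2}x^k\in L^p(x^\alpha dx)$ for every $p\in[2,\infty]$. Hence $S_mf\in\mathcal L^p_\alpha$ and so $g\in\mathcal L^p_\alpha$.

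Next I will compute the coefficients of $g$. By linearity of \eqref{LT-2} and orthonormality \eqref{ort}, $\widehat g_\alpha(n)=0$ for $n<m$ and $\widehat g_\alpha(n)=\widehat f_\alpha(n)$ for $n\ge m$. The integrals defining $\widehat f_\alpha(n)$ converge absolutely by H\"older's inequality, because $L^{(\alpha)}_n\in\mathcal L^{p'}_\alpha$. Consequently $\|\widehat g_\alpha\|_{{l}^{p'}_\alpha}=\mathcal E_m(f)_p$ exactly. For $p=\infty$, where $p'=1$, this reads $\sum_{n\ge m}(\lambda_n^\alpha)^{1/2}|\widehat f_\alpha(n)|$, consistent with \eqref{Emp} at $p'=1$.

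Finally, Theorem \ref{hyldual}(ii) applied to $g\in\mathcal L^p_\alpha$ gives $\|g\|_{\mathcal L^p_\alpha}\le c^*_{\alpha,p}\|\widehat g_\alpha\|_{{l}^{p'}_\alpha}=c^*_{\alpha,p}\mathcal E_m(f)_p$, which proves the claim. The only point needing care is that part (ii) requires $g\in\mathcal L^p_\alpha$ a priori, as Remark \ref{needf} shows; this is exactly what the first step supplies. I therefore do not expect a genuine obstacle. The only delicate case is the endpoint $p=\infty$, where Lemma \ref{lem1dual} together with the uniqueness argument in Theorem \ref{hyldual}(ii) (density in $\mathcal L^1_\alpha$ via Lemma \ref{dense}) still applies, since $p'=1<\infty$.
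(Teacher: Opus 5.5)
Your proposal is correct and is essentially the paper's proof. The paper also uses the partial sum $S_mf$, but instead of citing Theorem~\ref{hyldual}\,(ii) it applies part~(i) to the tail sequence and identifies the resulting function with $f-S_mf$ by the uniqueness argument, which is exactly how part~(ii) is proved; your direct appeal to (ii), after checking $f-S_mf\in\mathcal{L}^p_{\alpha}$, is a compressed version of the same argument.
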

\begin{proof}
We may assume $\mathcal{E}_m(f)_p<\infty$. Put $S_mf:=\sum_{n<m}\widehat{f}_{\alpha}(n)L^{(\alpha)}_n\in\Pi_m$. By Theorem \ref{hyldual}\,{\rm(i)} applied to the sequence which equals $\widehat{f}_{\alpha}(n)$ for $n\ge m$ and $0$ otherwise, the corresponding series converges in $\mathcal{L}^p_{\alpha}$ to a function $g$ with $\|g\|_{\mathcal{L}^p_{\alpha}}\le c^{*}_{\alpha,p}\mathcal{E}_m(f)_p$ and with those coefficients. The function $f-S_mf$ lies in $\mathcal{L}^p_{\alpha}$ and has the same coefficients, so the uniqueness argument in the proof of Theorem \ref{hyldual}\,{\rm(ii)} gives $f-S_mf=g$ almost everywhere. Hence $E_m(f)_p\le\|f-S_mf\|_{\mathcal{L}^p_{\alpha}}=\|g\|_{\mathcal{L}^p_{\alpha}}\le c^{*}_{\alpha,p}\mathcal{E}_m(f)_p$.
\end{proof}
\begin{theorem}\label{tm2p}
Let $\alpha\ge0$, $r\in\mathbb{N}$, $1\le p\le\infty$ and $f\in\mathcal{L}^p_{\alpha}(\mathbb{R}^+)$.
\begin{itemize}
\item[\rm(i)] Let $1\le p\le2$ and $\beta>0$. If $f\in\mathrm{Lip}(\beta,r;\mathcal{L}^p_{\alpha})$, then $\mathcal{E}_m(f)_p=O\big(m^{-\beta}\big)$ as $m\to\infty$.
\item[\rm(ii)] Let $2\le p\le\infty$ and $0<\beta<r$. If $\mathcal{E}_m(f)_p=O\big(m^{-\beta}\big)$, then $f\in\mathrm{Lip}(\beta,r;\mathcal{L}^p_{\alpha})$; moreover $E_m(f)_p=O(m^{-\beta})$.
\end{itemize}
For $p=2$ the two statements are converse to one another and, by \eqref{Em}, reduce to Theorem \ref{tm2}.
\end{theorem}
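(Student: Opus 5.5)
For (i) I will argue directly from Theorem \ref{estimates} with $t=1/m$, exactly as the Jackson inequality was obtained at $p=2$. Since $\min\{1,n/m\}=1$ for every $n\ge m$, discarding the terms with $n<m$ in \eqref{main1} gives
\[
\mathcal E_m(f)_p\le \frac{c_{\alpha,p}}{c_\alpha^{\,r}}\,\Omega_r\big(L;f;\tfrac1m\big)_p\le \frac{c_{\alpha,p}}{c_\alpha^{\,r}}\,M m^{-\beta},\qquad 1<p\le2 .
\]
For $p=1$ the same truncation applies to the supremum in \eqref{main2}, with constant $1/(c_\alpha^{\,r}\Gamma(\alpha+1))$. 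No restriction on $\beta$ is needed here.

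For (ii) I plan to imitate the proof of $(\mathrm{ii})\Rightarrow(\mathrm{i})$ in Theorem \ref{tm2}, replacing Plancherel by Theorem \ref{estimatesdual}. Set $q=p'\in[1,2]$, $w_n=(\lambda_n^\alpha)^{(2-q)/2}$, and $\gamma_m=\mathcal E_m(f)_p^{q}$. Then $w_n|\widehat f_\alpha(n)|^{q}=\gamma_n-\gamma_{n+1}$. The hypothesis, together with $\gamma_m\le\gamma_1$ (which is finite because $\mathcal E_m(f)_p<\infty$ for large $m$ and only finitely many terms precede it), gives $\gamma_m\le c\,m^{-\beta q}$ for all $m\ge1$. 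For $t=1/m$ the right-hand side of \eqref{main3}, raised to the power $q$, splits as
\[
\gamma_m+m^{-rq}\sum_{n<m}n^{rq}w_n|\widehat f_\alpha(n)|^{q}.
\]
Abel summation together with $n^{rq}-(n-1)^{rq}\le rq\,n^{rq-1}$ (valid since $rq\ge1$) bounds the second sum by
\[
\gamma_1+rq\,c\sum_{n<m}n^{rq-1-\beta q}\lesssim m^{(r-\beta)q}.
\]
Here $\beta<r$ is used exactly as before. Hence $\Omega_r(L;f;1/m)_p\lesssim m^{-\beta}$.

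The passage from $t=1/m$ to general $0<t\le1$ follows verbatim from Theorem \ref{tm2}: use monotonicity (Property \ref{prop} II) with $m=\lfloor1/t\rfloor$ when $t\le1/2$, and the crude bound $\Omega_r\le(1+e^{1/2})^r\|f\|_{\mathcal L^p_\alpha}$ when $t>1/2$. The statement $E_m(f)_p=O(m^{-\beta})$ is then immediate from Proposition \ref{jbp}.

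The only delicate points are the $p=\infty$ case ($q=1$) and finiteness at small $m$. For $q=1$ Abel summation still works because $r\ge1$. For finiteness, Theorem \ref{estimatesdual} requires the full weighted sum to be finite, and the finitely many low-order terms are harmless. The identification $p=2$ with Theorem \ref{tm2} then follows from \eqref{Em}.
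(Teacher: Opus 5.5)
Your proposal is correct and follows the paper's proof essentially step for step. Part (i) truncates \eqref{main1}/\eqref{main2} at $t=1/m$; part (ii) applies Theorem~\ref{estimatesdual} at $t=1/m$, then Abel summation with $n^{rp'}-(n-1)^{rp'}\le rp'\,n^{rp'-1}$, monotonicity of the modulus with $m=\lfloor 1/t\rfloor$, and finally Proposition~\ref{jbp}. You also make explicit why $\gamma_1<\infty$ (finiteness of $\gamma_m$ for large $m$ plus finitely many earlier terms), which the paper leaves implicit.
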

\begin{proof}
{\rm(i)} Apply Theorem \ref{estimates} with $t=\frac1m$ and discard the terms with $n<m$. For $n\ge m$ one has $\min\{1,\frac nm\}=1$, so the left-hand side of \eqref{main1} is at least $\mathcal{E}_m(f)_p$ when $1<p\le2$, and the left-hand side of \eqref{main2} is at least $\mathcal{E}_m(f)_1$ when $p=1$. In either case
$\mathcal{E}_m(f)_p\le c\,\Omega_r\big(L;f;\tfrac1m\big)_p\le c\,M\,m^{-\beta}$.
{\rm(ii)} Write $w_n:=(\lambda^{\alpha}_n)^{\frac{2-p'}{2}}$ and $\gamma_m:=\mathcal{E}_m(f)_p^{p'}=\sum_{n\ge m}w_n|\widehat{f}_{\alpha}(n)|^{p'}$, so that $w_n|\widehat{f}_{\alpha}(n)|^{p'}=\gamma_n-\gamma_{n+1}$; note $1\le p'\le2$. Since $\gamma_m\le\gamma_1<\infty$ for every $m$, after enlarging the constant we may assume $\gamma_m\le c\,m^{-\beta p'}$ for all $m\ge1$. Because $\min\{1,\frac nm\}=\frac nm$ for $n<m$ and $=1$ for $n\ge m$, Theorem \ref{estimatesdual} with $t=\frac1m$ gives
\[
\Omega_r\big(L;f;\tfrac1m\big)_p^{p'}\ \le\ \big(c^{*}_{\alpha,p}\big)^{p'}(c'_\alpha)^{rp'}\left[\gamma_m+\frac{1}{m^{rp'}}\sum_{n<m}n^{rp'}w_n\big|\widehat{f}_{\alpha}(n)\big|^{p'}\right].
\]
Abel summation together with $n^{rp'}-(n-1)^{rp'}\le rp'\,n^{rp'-1}$ yields
\[
\sum_{n<m}n^{rp'}w_n\big|\widehat{f}_{\alpha}(n)\big|^{p'}\ \le\ \gamma_1+rp'\sum_{n=2}^{m-1}n^{rp'-1}\gamma_n\ \le\ \gamma_1+rp'c\sum_{n=2}^{m-1}n^{rp'-1-\beta p'} ,
\]
and $\beta<r$ gives $rp'-1-\beta p'>-1$, so the last sum is at most $C_{r,\beta,p}\,m^{rp'-\beta p'}$. Since also $\beta<r$ gives $m^{-rp'}\gamma_1\le\gamma_1m^{-\beta p'}$, we conclude that $\Omega_r(L;f;\frac1m)_p\le C_1m^{-\beta}$ for all $m\ge1$. Finally, if $0<t\le\frac12$ put $m:=\lfloor 1/t\rfloor\ge2$, so that $t\le\frac1m$ and $m\ge\frac{1}{2t}$; Property \ref{prop}\,II) then gives $\Omega_r(L;f;t)_p\le C_1(2t)^{\beta}$, while for $\frac12<t\le1$ Property \ref{prop}\,II) and the bound $\Omega_r(L;f;1)_p\le(1+B_{p,\alpha})^{r}\|f\|_{\mathcal{L}^p_{\alpha}}$ of Section \ref{sec2} give $\Omega_r(L;f;t)_p\le(1+B_{p,\alpha})^{r}\|f\|_{\mathcal{L}^p_{\alpha}}(2t)^{\beta}$. Hence $f\in\mathrm{Lip}(\beta,r;\mathcal{L}^p_{\alpha})$. The last assertion is Proposition \ref{jbp}.
\end{proof}
\begin{remark}
For $p\ne2$ the two implications of Theorem \ref{tm2p} point in opposite directions, and the results above do not decide whether they combine into an equivalence. What Remark \ref{whyp2} rules out is the reversal, for a fixed $t$, of the estimate \eqref{main1}; it says nothing about the two asymptotic conditions of Theorem \ref{tm2p}, and indeed its test functions $f=L^{(\alpha)}_m$ satisfy both of them simultaneously. We do not pursue the question here.
\end{remark}
\section*{Statements and Declarations}
\subsection*{Funding}
This research was funded by the Committee of Science of the Ministry of Science and Higher Education of the Republic of Kazakhstan (Grant No. BR31714735).
\subsection*{Data availability}
No datasets were generated or analysed during the present study.

\end{document}